\newtheorem{theorem}{Theorem}[section]
\newtheorem{lemma}{Lemma}[section]
\newtheorem{proposition}{Proposition}[section]
\newtheorem{remark}{Remark}[section]
\numberwithin{equation}{section}
\newcommand{\bbr}{\mathbb{R}}
\newcommand{\bbI}{\mathbb{I}}
\newcommand{\bbn}{\mathbb{N}}
\newcommand{\bv}{\bm{v}}
\newcommand{\vf}{\bv_{f}}
\newcommand{\vs}{\bv_{s}}
\newcommand{\bu}{\bm{u}}
\newcommand{\us}{\bu_{s}}
\newcommand{\hv}{\hat{\bv}}
\newcommand{\hu}{\hat{\bu}}
\newcommand{\hvf}{\hv_{f}}
\newcommand{\hvs}{\hv_{s}}
\newcommand{\hvone}{\hv^1}
\newcommand{\hvtwo}{\hv^2}
\newcommand{\hvfone}{\hvone_f}
\newcommand{\hvsone}{\hvone_s}
\newcommand{\hvftwo}{\hvtwo_f}
\newcommand{\hvstwo}{\hvtwo_s}
\newcommand{\hO}{\widehat{\Omega}}
\newcommand{\bF}{\bm{F}}
\newcommand{\Fs}{\bm{F}_s}
\newcommand{\Fse}{{}\Fs^e}
\newcommand{\hF}{\hat{\bF}}
\newcommand{\hFf}{\hF_{f}}
\newcommand{\hFs}{\hF_{s}}
\newcommand{\hFse}{{}\hFs^e}
\newcommand{\hFsg}{{}\hFs^g}
\newcommand{\Oft}{\Omega_{f}^{t}}
\newcommand{\Ost}{\Omega_{s}^{t}}
\newcommand{\Osg}{\Omega_{s}^{g}}
\newcommand{\Gt}{\Gamma^{t}}
\newcommand{\Of}{\Omega_{f}}
\newcommand{\Os}{\Omega_{s}}
\newcommand{\Gs}{\Gamma_s}
\newcommand{\Gst}{\Gamma_s^t}
\newcommand{\OM}{\Omega}
\newcommand{\tO}{\widetilde{\OM}}
\newcommand{\hnab}{\hat{\nabla}}
\newcommand{\hJ}{\hat{J}}
\newcommand{\hJf}{\hJ_f}
\newcommand{\hJs}{\hJ_s}
\newcommand{\hJse}{\hJs^e}
\newcommand{\hJsg}{\hJs^g}
\newcommand{\hg}{\hat{g}}
\newcommand{\rf}{\rho_f}
\newcommand{\rs}{\rho_s}
\newcommand{\hr}{\hat{\rho}}
\newcommand{\hrf}{\hr_f}
\newcommand{\hrs}{\hr_s}
\newcommand{\hc}{\hat{c}}
\newcommand{\hcf}{\hc_f}
\newcommand{\hcs}{\hc_s}
\newcommand{\hcss}{\hcs^*}
\newcommand{\nuf}{\nu_f}
\newcommand{\nus}{\nu_s}
\newcommand{\mus}{\mu_s}
\newcommand{\hnu}{\hat{\nu}}
\newcommand{\hnuf}{\hnu_f}
\newcommand{\hnus}{\hnu_s}
\newcommand{\hmu}{\hat{\mu}}
\newcommand{\hmus}{\hmu_s}
\newcommand{\vp}{\varphi}
\newcommand{\cL}{\mathcal{L}}
\newcommand{\bphi}{\bar{\phi}}
\newcommand{\wo}{w_0}
\newcommand{\bsigma}{\bm{\sigma}}
\newcommand{\sigf}{\bsigma_f}
\newcommand{\sigs}{\bsigma_s}
\newcommand{\sigse}{\sigs^{e}}
\newcommand{\sigsv}{\sigs^v}
\newcommand{\hsigma}{\hat{\bsigma}}
\newcommand{\hsigf}{\hat{\bsigma}_f}
\newcommand{\hsigs}{\hat{\bsigma}_s}
\newcommand{\hsigse}{\hsigs^{e}}
\newcommand{\hsigsv}{\hsigs^v}
\newcommand{\bn}{\bm{n}}
\newcommand{\hn}{\hat{\bn}}
\newcommand{\ngt}{\bn_{\Gt}}
\newcommand{\ngst}{\bn_{\Gst}}
\newcommand{\hng}{\hn_{\Gamma}}
\newcommand{\hngs}{\hn_{\Gs}}
\newcommand{\bT}{\bm{T}}
\newcommand{\hT}{\hat{\bT}}
\newcommand{\hpi}{\hat{\pi}}
\newcommand{\pif}{\pi_f}
\newcommand{\pis}{\pi_s}
\newcommand{\hpif}{\hpi_f}
\newcommand{\hpis}{\hpi_s}
\newcommand{\hpione}{\hpi^1}
\newcommand{\hpitwo}{\hpi^2}
\newcommand{\hpisone}{\hpione_s}
\newcommand{\hpistwo}{\hpitwo_s}
\newcommand{\cD}{\mathcal{D}}
\newcommand{\Dq}{\cD_q}
\newcommand{\Dqv}{\Dq^1}
\newcommand{\Dqc}{\Dq^2}
\newcommand{\YT}{Y_T}
\newcommand{\ZT}{Z_T}
\newcommand{\bS}{\bm{S}}
\newcommand{\bK}{\bm{K}}
\newcommand{\tk}{\tilde{\bK}}
\newcommand{\kf}{\tk_f}
\newcommand{\ks}{\tk_s}
\newcommand{\bk}{\bm{k}}
\newcommand{\bH}{\bm{H}}
\newcommand{\bh}{\bm{h}}
\newcommand{\FOinv}{\inv{\hF} - \bbI}
\newcommand{\FOinvtran}{\invtr{\hF} - \bbI}
\newcommand{\FfOinv}{\inv{\hFf} - \bbI}
\newcommand{\FfOinvtran}{\invtr{\hFf} - \bbI}
\newcommand{\FsO}{\hFs - \bbI}
\newcommand{\FsOinv}{\inv{\hFs} - \bbI}
\newcommand{\FsOinvtran}{\invtr{\hFs} - \bbI}
\newcommand{\TD}{T^{\delta}}
\newcommand{\TQ}{T^{\frac{1}{q'}}}
\newcommand{\TQQ}{T^{\frac{1}{2q'}}}
\newcommand{\Dh}{\Delta_h}
\newcommand{\Df}{D_f}
\newcommand{\Ds}{D_s}
\newcommand{\hD}{\hat{D}}
\newcommand{\hDf}{\hD_f}
\newcommand{\hDs}{\hD_s}
\newcommand{\vo}{\hv^0}
\newcommand{\co}{\hc^0}
\newcommand{\tF}{\tilde{F}}
\newcommand{\tFf}{\tF_f}
\newcommand{\tFs}{\tF_s}
\newcommand{\barv}{\bar{\bv}}
\newcommand{\barvs}{\barv_s}
\newcommand{\barpi}{\bar{\pi}}
\newcommand{\barpis}{\barpi_s}
\newcommand{\tv}{\tilde{\bv}}
\newcommand{\tvs}{\tv_s}
\newcommand{\tpi}{\tilde{\pi}}
\newcommand{\tpis}{\tpi_s}
\newcommand{\bR}{\bm{R}}
\newcommand{\vr}{\varrho}
\newcommand{\nusigma}{\nu_{\Sigma}}
\newcommand{\sL}{\mathscr{L}}
\newcommand{\sN}{\mathscr{N}}
\newcommand{\sM}{\mathscr{M}}
\newcommand{\cE}{\mathcal{E}}
\newcommand{\cM}{\mathcal{M}}
\newcommand{\onehalf}{\frac{1}{2}}
\newcommand{\tr}{\mathrm{tr}}
\newcommand{\rv}[1]{\left. #1 \right\vert}
\newcommand{\tran}[1]{ {#1} ^{\top}}
\newcommand{\inv}[1]{ {#1} ^{-1}}
\newcommand{\invtr}[1]{ {#1} ^{-\top}}
\renewcommand{\d}{\mathrm{d}}
\newcommand{\pt}{\partial_t}
\newcommand{\jump}[1]{\left\llbracket #1 \right\rrbracket}
\newcommand{\abs}[1]{\left\vert #1 \right \vert}
\newcommand{\norm}[1]{\left\Vert #1 \right \Vert}
\newcommand{\seminorm}[1]{\left[ #1 \right]}
\newcommand{\inner}[2]{\left\langle #1 , #2 \right\rangle}
\newcommand{\Lq}[1]{L^{#1}}
\newcommand{\Lqa}[1]{L^{#1}_{(0)}}
\newcommand{\W}[1]{W^{#1}_{q}}
\newcommand{\WO}[1]{{_0W}^{#1}_{q}}
\newcommand{\WOO}[1]{W^{#1}_{q,0}}
\newcommand{\WA}[1]{W^{#1}_{q,(0)}}
\newcommand{\Bq}[1]{B^{#1}_{q,q}}
\newcommand{\Bqp}[1]{B^{#1}_{q,p}}
\renewcommand{\Bar}[1]{\overline{#1}}
\newcommand{\FSI}{\eqref{Fluid}--\eqref{initialc}}
\newcommand{\NFSI}{\eqref{Fluid0}--\eqref{initialv}}
\newcommand{\NLFSI}{\eqref{Fluid1}--\eqref{initialv1g}}
\DeclareMathOperator*{\Div}{\mathrm{div}}
\newcommand{\hdiv}{\widehat{\Div}}
\newcommand{\hDelta}{\widehat{\Delta}}
\begin{document}

\title[Fluid-structure interaction problem for plaque growth]{On a  fluid-structure interaction problem for plaque growth}

\author{Helmut Abels}
\address{Fakult\"at f\"ur Mathematik, Universit\"at Regensburg, 93040 Regensburg, Germany}
\email{Helmut.Abels@ur.de}
\author{Yadong Liu}
\address{Fakult\"at f\"ur Mathematik, Universit\"at Regensburg, 93040 Regensburg, Germany}
\email{Yadong.Liu@ur.de}

\date{\today}

\subjclass[2010]{Primary: 35R35; Secondary: 35Q30, 74F10, 74L15, 76T99}
\keywords{Fluid-structure interaction, two-phase flow, growth, free boundary value problem, maximal regularity} 

\thanks{Y. Liu was supported by the RTG 2339 ``Interfaces, Complex Structures, and Singular Limits'' of the German Science Foundation (DFG). The support is gratefully acknowledged.}

\begin{abstract}
	We study a free-boundary fluid-structure interaction problem with growth, which arises from the plaque formation in blood vessels. The fluid is described by the incompressible Navier-Stokes equation, while the structure is considered as a viscoelastic incompressible neo-Hookean material. Moreover, the growth due to the biochemical process is taken into account. Applying the maximal regularity theory to a linearization of the equations, along with a deformation mapping, we prove the well-posedness of the full nonlinear problem via the contraction mapping principle. 
\end{abstract}

\maketitle

%\setcounter{tocdepth}{2}
%\tableofcontents

\section{Introduction}
\subsection{The free-boundary fluid-structure interaction model}
In this paper, we consider a free-boundary fluid-structure interaction problem with growth, which is used to describe the plaque formation in a human artery. The blood is assumed to be the incompressible Navier-Stokes equation and the artery is modeled by an elastic equation with viscosity. Based on \cite{Yang2016}, where the model was proposed and simulated in a cylindrical domain, we analyze such problem in a bounded domain $ \Omega^t = \subset \bbr^{n} $, $ n \geq 2 $. See Figure \ref{domain}. 
\begin{figure}[h]
	\centering
	\begin{tikzpicture}[line width=0.75pt]
		\draw [scale = 0.9] plot[smooth cycle] 
		coordinates{(0,0) (0.5,1.5) (2,2.5) (6,3) (8,0) (6,-1.65) (4,-2) (2,-1.65)};
		\draw[shift ={(2.5 ,0.3)}, scale = 0.3] plot[smooth cycle] coordinates{(0,0) (0.5,2) (2,3.5) (7,3) (8,0) (6,-3) (4,-3) (2,-1.5)};
		\draw node at (4,0.5) {$\Oft$}; 
		\draw node at (1.5,0.6) {$\Ost$};
		\draw node at (6,0.5) {$\Gt$};
		\draw node at (2.5,3) {$\Gst$};
		\draw [line width=0.5pt, -stealth] (4.88,0.2) .. controls (5.5,0.24) .. (5.7,0.35);
		\draw [line width=0.5pt, -stealth] (2.8,2.5) .. controls (2.7,2.7) .. (2.7,2.7);
		\draw node at (3.5,-2.35) {$\OM^t$}; 
	\end{tikzpicture}
	\caption{Domain of the problem.}
	\label{domain}
\end{figure}
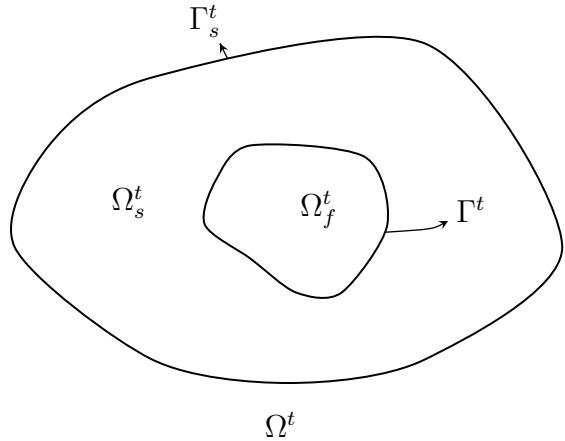
Here, $ \Omega^t = \Oft \cup \Ost \cup \Gt $, where $ \OM^t $ is divided by the interface $ \Gt $ into two disjoint parts, fluid domain $ \Oft $ and solid domain $ \Ost $. $ \Gst $ denotes the outer boundary of $ \OM^t $, which is also a free boundary. 

Before giving a precise description of the model, we introduce the setting of Lagrangian coordinate. For convenience, we define the moving domain at initial time $ t = 0 $ as $ \hO = \Of \cup \Os \cup \Gamma $, where $ \Of = \Omega_f^0 $, $ \Os = \Omega_s^0 $ and $ \Gamma = \Gamma^0 $. From the viewpoint of material deformation (see e.g. \cite{Ciarlet1988,Gurtin2010}), we set the so-called reference configuration at $ t = 0 $ and the deformed configuration at time $ t $. Moreover, we denote the spatial variable at $ t = 0 $ by the Lagrangian variable $ X $, resp., by the Eulerian variable $ x $ the spatial variable at $ t $. The velocities of deformations are $ \hv (X,t) $ and $ \bv(x,t) $ respectively. In the sequel, without special statement, the quantities or operators with a hat ``$ \hat{\cdot} $'' will indicate those in Lagrangian reference configuration. To formulate the model, we define the deformation as (See Figure \ref{deformation})
\begin{align*}
	\vp: \hO \rightarrow \Omega^t,
\end{align*}
with
\begin{align*}
	x = \vp(X,t) = X + \int_{0}^{t} \hv(X, \tau) \d \tau, \quad \forall X \in \hO,
\end{align*}
and $ \rv{x}_{t = 0} = \vp(X,0) = X $.
\begin{figure}[ht]
	\centering
	\begin{tikzpicture}[line width=0.75pt,scale = 0.9]
		\draw [shift ={(-8.45 ,0)}, scale = 0.7] plot[smooth cycle] 
		coordinates{(0,0) (0.5,2.5) (2,3.6) (6.5,3) (7.5,0) (6,-1.8) (4,-2) (2,-1.3)};
		\draw [shift ={(-6.5 ,0.3)}, scale = 0.2] plot[smooth cycle] coordinates{(0,0) (0.5,2) (2,3) (7,3.5) (9,0) (6,-2.5) (4,-2.5) (2,-1.5)};
		\draw node at (-5.5,0.5) {$\Of$}; 
		\draw node at (-7.4,0.35) {$\Os$};
		\draw node at (-5.5,-2) {$\hO$};
		\draw [-stealth] (-3,1) .. controls (-2,1.5) and (-1,1.5).. (0,1);
		\draw [scale = 0.7] plot[smooth cycle] 
		coordinates{(0,0) (0.5,1.5) (2,2.5) (6,3) (8,0) (6,-1.65) (4,-2) (2,-1.65)};
		\draw [shift ={(2 ,0.3)}, scale = 0.2] plot[smooth cycle] coordinates{(0,0) (0.5,2) (2,3.5) (7,3) (8,0) (6,-3) (4,-3) (2,-1.5)};
		\draw node at (3,0.25) {$\Oft$}; 
		\draw node at (1.45,0.3) {$\Ost$};
		\draw node at (3,-2) {$\OM^t$};
		\draw node at (-1.55,1.8) {$\vp$};
	\end{tikzpicture}
	\caption{Deformation $ \vp $ mapping from $ \hO $ into $ \OM $.}
	\label{deformation}
\end{figure}

Subsequently, we denote by $ \hF $ the deformation gradient
\begin{equation}\label{DG}
	\hF = \frac{\partial}{\partial X} \vp(X,t) = \hnab \vp(X,t) = \bbI + \int_{0}^{t} \hnab \hv(X,\tau) \d \tau, \quad \forall X \in \hO,
\end{equation}
with initial deformation $ \rv{\hF}_{t = 0} = \bbI $ and by $ \hJ = \det \hF $ its determinant.
Conversely, we have the inverse deformation gradient by $ \bF = \inv{\hF} $. In the following, quantities in fluid and structure domain will be distinguished by subscript ``$ f $'' and ``$ s $'' respectively, quantities without subscript are defined both in fluid and structure domain.

Now, we are in the position to describe the whole system. For a finite time $ T > 0 $, $ 0 < t < T $, we model the motion of the fluid by the classical incompressible Navier-Stokes equations, which is
\begin{equation}
	\label{Fluid}
	\left.
		\begin{aligned}
			\rf \left(\pt + \vf \cdot \nabla \right) \vf & = \Div \sigf(\vf,\pif) \\
			\Div \vf & = 0 
		\end{aligned}
	\right\} \quad \text{in}\ \Oft,
\end{equation}
where $ \rf $ is the known constant fluid density. $ \sigf(\vf,\pif) = - \pif \bbI + \nuf ( \nabla \vf + \tran{\nabla} \vf )  $ denotes the Cauchy stress tensor, $ \pif $ is the unknown fluid pressure and $ \nuf $ represents the fluid viscosity.

%In applications, models depend on the nature of the growth. The two assumptions most commonly applied are \textit{constant-density}, which stands for adding new material with same density and \textit{constant-volume}, by which the total mass is added and density varies. Since constant-density growth is usually coupled with the assumption of an incompressible tissue.
The equations for the solid are written as:
\begin{equation}
	\label{Solid}
	\left.
	\begin{aligned}
		\rs \left(\pt + \vs \cdot \nabla \right) \vs & = \Div \sigs(\vs,\pis) \\
		\left(\pt + \vs \cdot \nabla \right) \rs + \rs \Div \vs & = f_s^g
	\end{aligned}
	\right\} \quad \text{in}\ \Ost,
\end{equation}
where $ \rs $ is the solid density. $ \sigs = \sigse + \sigsv $ is the stress tensor of the solid where 
\begin{align*}
	\sigse & = - \pis \bbI + \mus \left( \inv{(\Fse)} \invtr{(\Fse)} - \bbI \right), \\
	\sigsv & = \nus \left( \pt \inv{\Fs} + \pt \invtr{\Fs} \right) \invtr{\Fs},
\end{align*}
$ \pis $ is the unknown solid pressure. Moreover, $ \mus $ denotes the Lam\'e coefficient and $ \nus $ represents the solid viscosity, which are all positive constant. 
The first equation is the balance equation of linear momentum.
$ \sigse $ is given by the constitutive relation of an incompressible Neo-Hookean material, which is hyperelastic, isotropic and incompressible. This relationship was widely used to describe blood vessel wall by many investigators, see e.g. \cite{Tang2004,Yang2016}. The tensor $ \Fse $ is the inverse elastic deformation gradient under the assumption of growth and will be assigned later in Section \ref{growth}. We consider not only the elastic stress $ \sigse $, but also the viscoelastic stress $ \sigsv $, which could be deduced by linearizing the Kelvin-Voigt stress tensor, see Mielke and Roub\'{\i}\v{c}ek \cite{MR2020}. The second equation of \eqref{Solid} is due to the mass balance, where $ f_s^g $ is called growth function and represents the rate of mass growth per unit volume due to the formation of plaque, see e.g. \cite{AM2002,JC2012,Yang2016}.

%\begin{remark}
%	It is known that solid equations are usually established by the displacement $ \us $ in Lagrangian coordinates. Thanks to the relation $ \hu_s(X,t) = \int_{0}^{t} \hv_s(X,\tau) \d \tau $ and \eqref{DG}, we describe the motion of structure by velocity $ \vs $ in Eulerian coordinates.
%\end{remark}
\begin{remark}
	For short time existence, the Kelvin-Voigt viscous stress $ \sigsv $ we introduced brings the parabolicity to the solid equation, which dominates the regularity of solutions. Moreover, after linearization one obtains a two-phase Stokes type problem, which ensures us to get the solvabilities and regularities of fluid and solid velocities by maximal regularity theory. In a recent work \cite{BG2021}, a similar stress tensor of solid part was also considered to investigate weak solutions of the interaction between an incompressible fluid and an incompressible immersed viscous-hyperelastic solid structure.
\end{remark}
\begin{remark}
	In \cite{Yang2016,Tang2004}, some numerical simulations are carried out by considering that $ \mus $ depends on the concentration of some chemical species, and hence varies from healthy vessel to plaque area. In the case of viscoelasticity, $ \nus $ may also vary over the solid domain. However, to simplify the model for the analysis, we assume that these coefficients are constant over the solid domain.
\end{remark}

The interaction between the fluid and solid is modeled by transmission conditions on the interface $ \Gt $, which consists of the continuity of velocity and the balance of normal stresses:
\begin{align}
	\label{vjump}
	\jump{\bv} & = 0, \quad \text{on} \  \Gt, \\
	\label{sigjump}
	\jump{\bsigma} \ngt & = 0, \quad \text{on} \  \Gt,
\end{align}
where $ \ngt $ stands for the outer unit normal vertor on $ \Gt $ pointing from $ \Oft $ to $ \Ost $. For a quantity $ f $, $ \jump{f} $ denotes the jump defined on $ \Oft $ and $ \Ost $ across $ \Gt $, namely,
\begin{equation*}
	\jump{f}(x) := \lim_{\theta \rightarrow 0} {f(x + \theta \ngt(x)) - f(x - \theta \ngt(x))}, \quad \forall x \in \Gt.
\end{equation*}
Moreover, to ensure the compatibility with growth and incompressibility, the boundary condition on $ \Gst $ is assumed to be the so-called ``stress-free'' boundary condition:
\begin{equation}
	\label{bounds}
	\sigs \ngst = 0, \quad \text{on} \  \Gst,
\end{equation}
where $ \ngst $ is the unit outer normal vector on $ \Gst = \partial \OM^t $.
\begin{remark}
	\label{stress free boundary}
	The ``stress-free'' boundary condition \eqref{bounds} is set due to physical reality. Since we consider the growth of solid part and both fluid and solid part are incompressible, we can not impose some types of boundary conditions. For example, the clamped condition $ \vs = 0 $ on $ \Gst $ (correspondingly, $ \vs = \pt \us = 0 $ on $ \Gst $), will destroy the property of incompressibility.
\end{remark}

\begin{remark}
	In this work, the fluid part is supposed to be surrounded by the solid part. In fact, if the solid is immersed in fluid domain, there will be no essential difference in our framework of analysis. Specifically, the outer boundary will still be a Neumann-type boundary, which is a ``do-nothing'' outer boundary for fluid. 
%	Moreover, we can also consider the periodic boundary, by which the analysis is also valid with slight modifications. 
\end{remark}

The initial values for velocities are prescribed as
\begin{equation}
	\label{initial}
	\vf(x,0) = \vf^0, \quad \vs(x,0) = \vs^0, \quad \text{in} \  \OM^0.
\end{equation}

\subsection{Biochemical processes}
The formation of plaque is usually due to biochemical processes in the blood flow and vessel wall. Following the descriptions in \cite{Yang2016,Yang2017}, we introduce the dynamics of monocytes in the blood flow and dynamics of macrophages and foam cells in the vessel wall, of which the concentrations are denoted by $ c_f $, $ c_s $, $ c_s^* $, respectively. Convection and diffusion happen during these biochemical processes, so the motion of monocytes in the blood is given by the transport-diffusion equation
\begin{equation}\label{CF}
	\pt c_f + \Div \left( c_f \vf \right) - D_f \Delta c_f = 0, \quad \text{in}\ \Oft,
\end{equation}
where $ D_f > 0 $ is the diffusion coefficient in the blood, which is assumed to be a constant since the fluid is incompressible and homogeneous. Analogously, the motion of macrophages in the vessel wall is described by
\begin{equation}\label{CS}
	\pt c_s + \Div \left( c_s \vs \right) - D_s \Delta c_s = - f_s^r, \quad \text{in}\ \Ost,
\end{equation}
where $ D_s > 0 $ is the diffusion coefficient in the vessel wall. To simplify the model, we assume that the solid is a homogeneous material, and thus $ D_s $ is a constant. We mention that vessel wall could be inhomogeneous, which represents different diffusion rate in healthy and diseased vessel, see e.g. \cite{Yang2016}. $ f_s^r $ is the reaction function, modeling the rate of transformation from macrophages into foam cells. Furthermore, since foam cells do not diffuse inside the solid material, they are accumulated only due to the convection with $ \vs $ and the transformation from macrophages, which results in the equation of foam cells
\begin{equation}\label{CSS}
	\pt c_s^* + \Div \left( c_s^* \vs \right) = f_s^r, \quad \text{in}\ \Ost.
\end{equation}
The reaction term $ f_s^r $ is supposed to depend on the concentration of macrophages $ c_s $ linearly, namely,
\begin{equation}\label{fsr}
	f_s^r = \beta c_s, \quad \text{in}\ \Ost,
\end{equation}
where $ \beta > 0 $ is assumed to be a constant. In reality, it is more complicated and may depend on the concentration of other chemical species. We just assume a linear relation for the sake of analysis. Then, we give another linear dependence of $ f_s^g $, which is
\begin{equation}\label{fsg}
	f_s^g = \gamma f_s^r = \gamma \beta c_s, \quad \text{in}\ \Ost,
\end{equation}
with a positive constant $ \gamma $. \eqref{fsg} indicates the plaque growth as mentioned in \eqref{Solid}, resulting from the accumulation of foam cells.

To close the system, we still need to model the penetration of monocytes from the blood into the vessel wall, for which the transmission conditions are given on $ \Gt $ by the jump condition
\begin{align}
	\jump{D \nabla c} \cdot \ngt & = 0, \quad \text{on} \  \Gt, 
	\label{TC1} \\
	\zeta \jump{c} - D_s \nabla c_s \cdot \ngt & = 0, \quad \text{on} \  \Gt,
	\label{TC2}
\end{align}
where $ \zeta $ denotes the permeability of the interface $ \Gt $ with respect to the monocytes, which should depend on the hemodynamical stress $ \sigf \cdot \ngt $, however, is supposed to be a constant for simplicity. Moreover, the condition on the outer boundary $ \Gst $ is given by
\begin{equation}\label{boundsc}
	D_s \nabla c_s \cdot \ngst = 0, \quad \text{on} \  \Gst.
\end{equation}
At initial state, the vessel is supposed to be healthy. Hence, there is no foam cell in vessel, i.e.,
\begin{equation}
	c_s^*(x,0) = 0, \quad \text{in} \  \Ost.
\end{equation}
The initial values for concentrations of monocytes and macrophages are given by
\begin{equation}
	\label{initialc}
	c_f(x,0) = c_f^0, \quad c_s(x,0) = c_s^0, \quad \text{in} \  \OM^0.
\end{equation}

\subsection{Description of growth} \label{growth}
Now, we give the description of growth. Normally, prescribing the rate of growth function $ f_s^g $ is not enough to capture the full effect of the tissue growth. Specifically, the real deformation and corresponding deformation gradient $ \hFs $ are induced by both growth and mechanics. Hence, the deformation gradient $ \hFs $ is not enough to capture all responses in the system, for example the deformation gradient in $ \sigse $. Thus, simply transforming the system to Lagrangian coordinates, such as in \cite{CS2006}, we can not solve the whole fluid-structure interaction problem with growth.

As in \cite{Yang2016}, Yang et al. took the idea of deformation gradient decomposition based on the theory of multiple natural configurations. 
In this formulation, one needs a new configuration, which is usually called \textit{natural configuration}, so that one can decompose the whole process into a pure growth and a pure elastic one, see Figure \ref{Growth}.
For more details, readers are referred to \cite{AM2002,JC2012,RHM1994,Yang2016,Yang2017}. 
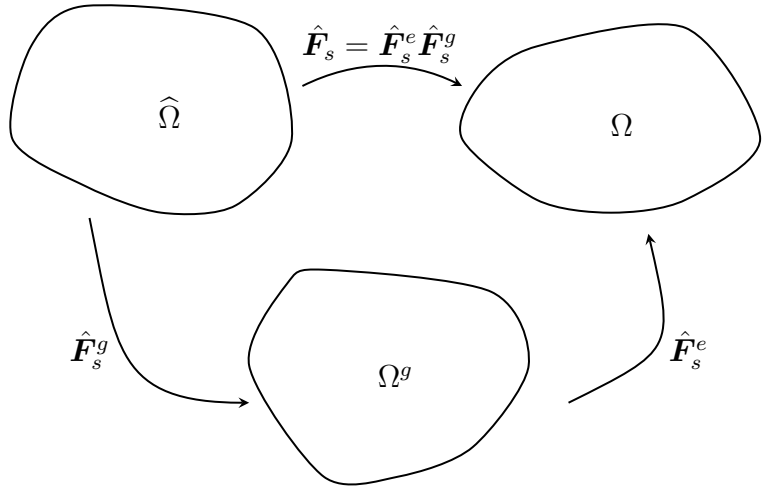
\begin{figure}[htbp]
	\centering
	\begin{tikzpicture}[line width=0.75pt,scale = 0.7]
	\draw [shift ={(-8.45,0)}, scale = 0.7] plot[smooth cycle] coordinates{(0,0) (0.5,2.5) (2,3.6) (6.5,3) (7.5,0) (6,-1.8) (4,-2) (2,-1.3)};
	\draw node at (-5.5,0.5) {$\hO$}; 
	\draw [-stealth] (-3,1) .. controls (-2,1.5) and (-1,1.5).. (0,1);
	\draw node at (-1.55,1.8) {$\hFs = \hFse \hFsg$};
	\draw [scale = 0.7] plot[smooth cycle] coordinates{(0,0) (0.5,1.5) (2,2.5) (6,3) (8,0) (6,-1.65) (4,-2) (2,-1.65)};
	\draw node at (3,0.25) {$\OM$}; 
	\draw [shift ={(-4,-5)}, scale = 0.7] plot[smooth cycle] 
	coordinates{(0,1) (1,3.2) (2,3.6) (6.5,3) (7.5,1) (6,-1.2) (4,-2) (2,-2)};
	\draw node at (-1.25,-4.5) {$\OM^g$}; 
	\draw [-stealth] (-7,-1.5) .. controls (-6.5,-4) and (-6.5,-5).. (-4,-5);
	\draw node at (-7,-4) {$\hFsg$};
	\draw [-stealth] (2,-5) .. controls (4,-4) and (4,-4).. (3.5,-1.8);
	\draw node at (4.25,-4) {$\hFse$};
	\end{tikzpicture}
	\caption{Decomposition of deformation gradient.}
	\label{Growth}
\end{figure}
In this article, we assume the decomposition of the deformation gradient $ \hFs $ as
\begin{equation*}
	\hFs = \hFse \hFsg, \quad \text{in}\ \Os,
\end{equation*}
where $ \hFsg $ is the so-called growth tensor and $ \hFse $ represents the purely elastic tensor. The associated determinants are 
\begin{equation*}
	\hJsg = \det \hFsg, \quad \hJse = \det \hFse, \quad \text{in}\ \Os,
\end{equation*}
respectively. Then
\begin{equation*}
	\hJs = \hJsg \hJse.
\end{equation*}

Growth may happen in different ways. In applications, two assumptions were most commonly applied: \textit{constant-density}, which stands for adding new material with the same density; \textit{constant-volume}, by which the total mass is added and density varies. Since constant-density growth is usually coupled with the assumption of an incompressible tissue, see e.g. \cite{JC2012,RHM1994}, we take this kind of growth into consideration in this work. Then the second equation of \eqref{Solid} reads as
\begin{equation*}
	\rs \Div \vs = f_s^g.
\end{equation*}
Moreover, we assume that plaques grows isotropically:
\begin{equation*}
	\hFsg = \hg \bbI, \quad \text{in}\ \Os,
\end{equation*}
where $ \hg = \hg(X,t) $ is the metric of growth, a scalar function depending on the concentration of macrophages. Hence, 
\begin{equation*}
	\hFse = \frac{1}{\hg} \hFs, \quad
	\hJsg = \hg^n,
\end{equation*}
where $ n $ is the dimension of space. As \cite{AM2002} mentioned, $ \hg $ describes the deformation state of the material, either growing or resorbing, as
\begin{align*}
	0 < \hg < 1 & \Rightarrow \text{resorption}, \\
	\hg > 1 & \Rightarrow \text{growth}.
\end{align*}
From \cite{AM2002,JC2012,Yang2016}, under the assumption of constant-density growth, we deduce that
\begin{equation}
	\label{gLagragian}
	\pt \hg = \frac{\gamma \beta}{n \hrs} \hcs \hg, \quad \text{in}\ \Os.
\end{equation}
This equation shows the specific dependence on $ \hcs $ of $ \hg $. At initial state, $ \hg \bbI $ is supposed to be the identity, i.e., 
\begin{equation*}
	\hg(X,0) = 1, \quad \text{in}\ \Os,
\end{equation*}
without growth or resorption of the material.

\subsection{Literature}
During last decades, fluid-structure interaction problems attracted much attention from mathematicians due to its strong applications in various areas, e.g., biomechanics, blood flow dynamics, aeroelasticity and hydroelasticity. Studies can be divided into two types depending on the dimensions of the fluid and the solid. They are for example 3d-3d coupled and 3d-2d coupled systems, where the solid is contained in the fluid and one part of fluid's boundary respectively.

In the case of 3d-3d model, which is exactly our consideration, let us recall some existence results of strong solutions. Well-posedness of such model was firstly established by Coutand and Shkoller \cite{CS2005}, where they investigated the interaction problem between the Navier-Stokes equation and a linear Kirchhoff elastic material. The results were extended to the quasilinear elastodynamics case by them, where they regularized the hyperbolic elastic equation by a particular parabolic artificial viscosity and then obtained the existence of strong solutions together with the a priori estimates in \cite{CS2006}. Thereafter, Ignatova, Kukavica, Lasiecka and Tuffaha \cite{IKLT2014,IKLT2017} investigated the coupled system of incompressible Navier-Stokes equation and a wave equation from different aspects. More specifically, In \cite{IKLT2014}, static damping and velocity internal damping were added in the wave equation and boundary friction was considered, by which exponential decay was obtained. Later, the boundary friction was removed in \cite{IKLT2017} by introducing the tangential and time-tangential energy estimates. The coupling of the Navier–Stokes equations and the Lam\'{e} system was analyzed by 
Kukavica and Tuffaha \cite{KT2012} with initial regularity $ (v_0, \xi_1) \in H^3(\Of) \times H^2(\Os) $, while Raymond and Vanninathan \cite{RV2014} further proved the existence and uniqueness of local strong solutions with a weaker initial regularity $ (v_0, \xi_1) \in H^{3/2 + \varepsilon}(\Of) \times H^{1 + \varepsilon}(\Os) $, $ \varepsilon > 0 $ arbitrarily small, with periodic boundary conditions. Lately, Boulakia, Guerrero and Takahashi \cite{BGT2019} showed a similar result for the Navier–Stokes-Lam\'{e} system in a smooth domain with reduced demand of the initial regularity.

There are also other variants of free-boundary fluid-structure interactions models. For compressible fluid coupled with elastic bodies, we refer to \cite{BG2010}, where Boulakia and Guerrero addressed the local in time existence and the uniqueness of regular solutions with the initial data $ (\rho_0, u_0, w_0, w_1) \in H^3(\Of) \times H^4(\Of) \times H^3(\Os) \times H^2(\Os) $. This results was later improved by Kukavica and Tuffaha \cite{KT2012compressible} with a weaker initial regularity $ (\rho_0, u_0, w_1) \in H^3(\Of) \times H^{3/2 + r}(\Of) \times H^{3/2 + r}(\Os) $, $ r > 0 $.
More recently, Shen, Wang and Yang \cite{SWY2021} consider the magnetohydrodynamics (MHD)-structure interaction system, where the fluid is described by the incompressible viscous non-resistive MHD equation and the structure is modeled by the wave equation with superconductor material. They solved the existence of local strong solutions with penalization and regularization techniques.

For the 3d-2d/2d-1d systems where where the structure is seen as one part of the fluid's boundary, we just mention several works on the existence and uniqueness of strong solutions to be concise. The mostly investigated case is the fluid-beam/plate systems where the beam/plate equation was imposed with different mechanical mechanism (rigidity, stretching, friction, rotation, etc.), readers are refer to \cite{Beirao da Veiga2004,DS2020,GH2016,GHL2019,Lequeurre2011,Lequeurre2013,MT2021NARWA,Mitra2020} and references therein. Moreover, the fluid-structure interaction problems with nonlinear shells were studied in \cite{CCS2007,CS2010,MRR2020}. It has to be mentioned that in the recent works \cite{DS2020,MT2021NARWA}, a maximal regularity framework, which requires lower initial regularity and less compatibility conditions compared to the energy method, was employed.

\subsection{Mathematical strategy and features}
The new difficulties arise from the plaque formation in the blood vessels, along with the interaction between the fluid and the solid separated by a free interface, the reaction and the diffusion of different cells and the growth of the vessel wall. Numerical computations were carried out in recent years \cite{FRW2016,Yang2016,Yang2017} to simulate the plaque formation and test the effects of different parameters.
To our best knowledge, this is the first work concerning the existence of the strong solutions to the fluid-structure interaction problems with growth. 
Unlike most of the literature above, which are associated with Hilbert spaces ($ L^2 $-setting) and energy methods, we establish our local strong solutions under the framework of maximal $ L^q $-regularity for more general dimension. The method is based on the Banach fixed-point theorem, for which we rewrite the free boundary problem established with Eulerian coordinates in Lagrangian reference configuration, linearize the system at the initial configuration, construct a contraction mapping in a fixed ball and show the local existence and uniqueness of strong solutions. Throughout the proof, we point out the following features.
\begin{itemize}
	\item[i)] We adapt the maximal $ L^q $-regularity theory to solve our problem. Hence, there will be no ``regularity loss'' from initial data to the solution spaces and only few compatibility conditions are needed.
	\item[ii)] The growth is considered to be of the constant-density type. Then under the assumption of  isotropy, the growth will be indicated by the metric function $ \hg $. An ordinary differential equation of $ \hg $ provides the regularity of $ \hg $ needed for the solid velocity and the concentration of macrophages.
	\item[iii)] The Kelvin-Voigt viscous stress $ \sigsv $ we introduced brings the parabolicity to the solid equation. For the linearization, we can use a two-phase Stokes type problem for the fluid-structure interaction problem. This makes sure that we can get the solvabilities and regularities of fluid and solid velocities by maximal regularity theory.
	\item[iv)] The transformed two-phase Stokes problem is endowed with a stress free (Neumann-type) outer boundary condition due to Remark \ref{stress free boundary}. One of our aims is to obtain the solvability of such system. To this end, reduction and truncation arguments are applied. More specifically, we firstly reduce the inhomogeneous linear system to a source and initial value homogeneous problem (except the boundary terms), in order to obtain the pressure regularities. Then by choosing a cutoff function (see \eqref{cutoff function}) which is supported in a subset $ U \subseteq \hO $ and imposing an artificial vanishing Dirichlet boundary on $ \Gs = \partial \hO $, one obtains the solvability of the linear system since the two-phase Stokes problem with Dirichlet boundary is solved in Appendix \ref{twophaseD}. 
\end{itemize}

\subsection{Outline of the paper}
In Section \ref{section 2} we briefly introduce some notations and function spaces along with several preliminary results. Transformation from the deformed configuration to the reference one is shown in the last subsection, as well as the main theorem for the transformed system. 
Section \ref{section 3} is devoted to the analysis of the underlying linear problems, where three separated parts of analysis are proceeded. The main results of this section are the maximal $ L^q $-regularities for these linear problems. The first one is the two-phase Stokes problems with Neumann boundary condition, to which reduction and truncation (localization) arguments are applied. The second problem consists of two reaction-diffusion systems with Neumann boundary condition due to the decoupling of the transmission problem, while the last one is an ordinary differential equations for growth and foam cells. 
In Section \ref{section 4}, we firstly give some estimates related to the deformation gradient, which are of much importance when proving that the constructed nonlinear terms are well-defined and endowed with the property of contraction in the next subsection. Then the full nonlinear system is shown to be well-posed locally in time via Banach fixed-point theorem. Moreover, the cell concentrations are showed to be always nonnegative, provided that the initial data is nonnegative.
Additionally, we introduce some maximal $ L^q $-regularity results of several linear systems in Appendix \ref{results: linear} and establish a uniform extension of the Sobolev-Slobodeckij spaces in Appendix \ref{appendix:extension}.

\section{General settings and main results}
\label{section 2}
\subsection{Mathematical notations}
For matrices $ A, B \in \bbr^{n \times n}  $, let $ A : B = \tr(\tran{B} A) $ and corresponding induced modulus of $ A $ as $ \abs{A} = \sqrt{A : A} $. The set of invertible matrices in $ \bbr^{n \times n}  $ is $ GL(n,\bbr) $. For a differentiable $ A : \bbr_+ \rightarrow GL(n,\bbr) $, we have two useful formulas as
\begin{align}
	\label{DtdA}
	& \frac{\d }{\d t} \det A  = \tr\left( \inv{A} \frac{\d }{\d t} A \right) \det A \\
	\label{DtA}
	& \frac{\d }{\d t} \inv{A}  = - \inv{A} \left(\frac{\d }{\d t} A\right) \inv{A},
\end{align}
which can be found in \cite{EGK2017,Gurtin2010}.
Furthermore, for a vector function $ \bu $ and a tensor matrix $ \bT $, we give an identity which will be used later (see e.g. \cite[(3.20)]{Gurtin2010}):
\begin{equation}
	\label{divTu}
	\Div \left( \tran{\bT} \bu \right) = \bT : \nabla \bu + \bu \cdot \Div \bT.
\end{equation}
% \cite[Lemma 5.3]{EGK2017} and \cite[(3.3)]{Gurtin2010}

For metric spaces $ X $, $ B_X(0,r) $ represents the open ball with radius $ r > 0 $ around $ x \in X $. For normed spaces $ X, Y $ over $ \mathbb{K} = \bbr $ or $ \mathbb{C} $, the set of bounded, linear operators $ T : X \rightarrow Y $ is denoted by $ \cL(X,Y) $ and in particular, $ \cL(X) = \cL(X,X) $.

As usual, the letter $ C $ in the paper represents generic positive constant which may change its value from line to line or even in the same line, unless we give a special declaration.
 \subsection{Function spaces}
If $ M \subseteq \bbr^d $, $ d \in \bbn_+ $ is measurable, $ \Lq{q}(M) $, $ 1 \leq q \leq \infty $ denotes the usual Lebesgue space and $ \norm{\cdot}_{\Lq{q}(M)} $ its norm, as well as the mean value zero Lebesgue space
\begin{equation*}
	\Lqa{q}(M) := \left\{ f \in \Lq{q}(M) : \int_M f \d \mu = 0 \right\},
\end{equation*}
with $ \abs{M} < \infty $. Moreover, $ \Lq{q}(M; X) $ denotes its vector-valued variant of strongly measurable $ q $-integrable functions/essentially bounded functions, where $ X $ is a Banach space. If $ M = (a, b) $, we write for simplicity $ \Lq{q}(a, b) $ and $ \Lq{q}(a, b; X) $. By simple computation, we have 
\begin{equation}
	\label{LI}
	\norm{f}_{\Lq{q}(a,b)} \leq \abs{a - b}^{\frac{1}{q}} \norm{f}_{\Lq{\infty}(a,b)}.
\end{equation}

Let $ \OM \subseteq \bbr^n $ be a open and nonempty domain, $ \W{m}(\OM) $ denotes the usual Sobolev space with $ m \in \bbn $ and $ \Lq{q}(\OM) = \W{0}(\OM) $. Moreover, we set 
\begin{gather*}
	W^{m}_{q,0}(\OM) = \overline{C_0^\infty (\OM)}^{\W{m}(\OM)}, \quad
	\W{-m}(\OM) := [ W^m_{q',0}(\OM) ]', \\
	\WA{m}(\OM) = \W{m}(\OM) \cap \Lqa{q}(\OM), \quad 
	\WA{-m}(\OM) := [ W^m_{q',(0)}(\OM) ]',
\end{gather*}
where $ q' $ is the conjugate exponent to $ q $ satisfying $ \frac{1}{q} + \frac{1}{q'} = 1 $. 

For $ k,k' \in \bbn $ with $ k < k' $, we consider the standard definition of the Besov spaces by real interpolation of Sobolev spaces (see Lunardi \cite{Lunardi2018})
\begin{equation*}
	\Bqp{s}(\OM) = \left( \W{k}(\OM), \W{k'}(\OM) \right)_{\theta, p},
\end{equation*}
where $ s = (1 - \theta)k + \theta k',\ \theta \in (0, 1) $. In the special case $ q = p $, we also have Sobolev-Slobodeckij spaces
\begin{equation*}
	\W{s}(\OM) = \Bq{s}(\OM) = \left( \W{k}(\OM), \W{k'}(\OM) \right)_{\theta, q},
\end{equation*}
which is endowed with norm $ \norm{\cdot}_{\W{s}(\OM)} = \norm{\cdot}_{\Lq{q}(\OM)} + \seminorm{\cdot}_{\W{s}(\OM)} $, where
\begin{equation*}
	\seminorm{f}_{\W{s}(\OM)}^q = \int_{\OM} \int_{\OM} \left( \frac{\abs{f(x) - f(y)}}{\abs{x-y}^s} \right)^q \frac{\d x \d y}{\abs{x - y}^n}.
\end{equation*}
The multiplication property of such space is given in the next lemma.
\begin{lemma}[Multiplication]
	\label{algebra}
	Let $ \OM $ be a bounded Lipschitz domain. For $ f,g \in \W{s}(\OM) $ and $ sq > n $ with $ s > 0 $, we have the multiplication property, which is 
	\begin{equation*}
		\norm{fg}_{\W{s}(\OM)} \leq M_q \norm{f}_{\W{s}(\OM)} \norm{g}_{\W{s}(\OM)},
	\end{equation*}
	where $ M_q $ is a constant depending on $ q $.
\end{lemma}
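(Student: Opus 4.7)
The plan is to reduce the estimate to two ingredients: the Sobolev embedding $W^s_q(\Omega) \hookrightarrow L^\infty(\Omega)$, which is available precisely under the hypothesis $sq > n$ on a bounded Lipschitz domain, and the standard splitting of the difference quotient for a product. I would first record that there is a constant $C = C(\Omega,s,q)$ with $\|f\|_{L^\infty(\Omega)} \leq C \|f\|_{W^s_q(\Omega)}$ for every $f \in W^s_q(\Omega)$; this is where the Lipschitz regularity of $\partial\Omega$ enters, via an extension operator to $\mathbb{R}^n$ together with the usual Besov/Sobolev embedding.

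Next I would bound the two pieces of $\|fg\|_{W^s_q(\Omega)} = \|fg\|_{L^q(\Omega)} + [fg]_{W^s_q(\Omega)}$ separately. For the $L^q$-part, Hölder's inequality together with the embedding gives
\begin{equation*}
\|fg\|_{L^q(\Omega)} \leq \|f\|_{L^\infty(\Omega)} \|g\|_{L^q(\Omega)} \leq C \|f\|_{W^s_q(\Omega)} \|g\|_{W^s_q(\Omega)}.
\end{equation*}
For the Gagliardo seminorm, I would use the pointwise identity $f(x)g(x) - f(y)g(y) = (f(x)-f(y))g(x) + f(y)(g(x)-g(y))$, the inequality $(a+b)^q \leq 2^{q-1}(a^q+b^q)$, and pull the $L^\infty$-factor out of each of the two resulting double integrals:
\begin{equation*}
[fg]_{W^s_q(\Omega)}^q \leq 2^{q-1} \bigl( \|g\|_{L^\infty(\Omega)}^q \, [f]_{W^s_q(\Omega)}^q + \|f\|_{L^\infty(\Omega)}^q \, [g]_{W^s_q(\Omega)}^q \bigr).
\end{equation*}
Taking $q$-th roots and applying the embedding once more bounds the right-hand side by a constant multiple of $\|f\|_{W^s_q(\Omega)} \|g\|_{W^s_q(\Omega)}$.

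Adding the two estimates yields the claimed inequality with a constant $M_q$ that depends on $q$, $s$, $n$, and $\Omega$ (via the embedding and extension constants). There is no real obstacle here; the only delicate point is ensuring that the Sobolev embedding $W^s_q(\Omega) \hookrightarrow L^\infty(\Omega)$ holds under the stated Lipschitz assumption on $\Omega$, but this is standard once one has a bounded extension operator $W^s_q(\Omega) \to W^s_q(\mathbb{R}^n)$ for such domains. The argument above is in fact the textbook proof of the Banach-algebra property of $W^s_q$ for $sq > n$, so I would expect the author's proof to follow essentially the same two-step splitting.
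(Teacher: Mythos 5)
Your route is genuinely different from the paper's: the authors do not give a hands-on argument at all, but simply cite Valent \cite[Theorem 1]{Valent1985} for integer $s$ and, using $\W{s} = B^s_{q,q}$, Johnsen \cite[Theorem 6.6]{Johnsen1995} for non-integer $s$. Your direct proof --- extension plus the embedding $\W{s}(\OM) \hookrightarrow \Lq{\infty}(\OM)$ for $sq > n$, then the splitting $f(x)g(x)-f(y)g(y) = (f(x)-f(y))g(x) + f(y)(g(x)-g(y))$ in the Gagliardo seminorm --- is correct and self-contained in the range $0 < s < 1$, and it adapts immediately to $s = 1$ by replacing the seminorm estimate with the Leibniz rule, $\norm{\hnab (fg)}_{\Lq{q}(\OM)} \leq \norm{g}_{\Lq{\infty}(\OM)} \norm{\hnab f}_{\Lq{q}(\OM)} + \norm{f}_{\Lq{\infty}(\OM)} \norm{\hnab g}_{\Lq{q}(\OM)}$. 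Since the paper applies the lemma essentially only with $s = 1$ (and $q > n$), your argument covers the cases that actually matter, and it is more elementary than the cited multiplication theorems.

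However, as a proof of the lemma \emph{as stated} (all $s > 0$ with $sq > n$) there is a gap: for $s > 1$ the $\W{s}$-norm is not $\norm{\cdot}_{\Lq{q}} + \seminorm{\cdot}_{\W{s}}$ with the double-integral seminorm applied to $fg$ itself, but involves derivatives up to order $\lfloor s \rfloor$ and the fractional seminorm of the top-order derivatives. After applying Leibniz, one must estimate terms such as $\seminorm{\partial f \, \partial g}_{\W{s-2}}$ or $\seminorm{(\partial^{\lfloor s \rfloor} f) g}_{\W{s - \lfloor s \rfloor}}$, and the cross terms (e.g. $\int\!\int |\partial^{\lfloor s \rfloor} f(y)|^q |g(x)-g(y)|^q |x-y|^{-n-(s-\lfloor s \rfloor)q} \, \d x \, \d y$) are not controlled by pulling out $\norm{f}_{\Lq{\infty}}$ or $\norm{g}_{\Lq{\infty}}$ alone; one needs either an induction with finer product estimates (e.g. using the H\"older continuity $|g(x)-g(y)| \lesssim |x-y|^{\alpha}$ coming from $\W{s} \hookrightarrow C^{0,\alpha}$) or the general paraproduct-type results of the cited references. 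So either restrict your argument to $0 < s \leq 1$ and note that this is all the paper uses, or supplement it for $s > 1$.
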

\begin{proof}
	For the case $ s \in \bbn_+ $, we refer to \cite[Theorem 1]{Valent1985}.
	For the other cases, since $ \W{s} = B_{q,q}^{s} $ for every $ s \in \bbr_+ \backslash \bbn$, then \cite[Theorem 6.6]{Johnsen1995} implies this.
\end{proof}

Next, for an interval $ I \subset \bbr $ and a Banach space $ X $, we recall the definition of vector-valued Sobolev-Slobodeckij space as 
\begin{equation*}
	\W{s}(I; X): = \left\{ f \in \Lq{q}(I; X): \norm{f}_{\W{s}(I; X)} < \infty \right\},
\end{equation*}
whose the norm is $ \norm{\cdot}_{\W{s}(I; X)} = \norm{\cdot}_{\Lq{q}(I; X)} + \seminorm{\cdot}_{\W{s}(I; X)} $ with
\begin{equation*}
	\seminorm{f}_{\W{s}(I; X)}^q = \int_{I} \int_{I} \left( \frac{\norm{f(t) - f(\tau)}_X}{\abs{t-\tau}^s} \right)^q \frac{\d t \d \tau}{\abs{t - \tau}}.
\end{equation*}
Then we define $ \WO{s}(0,T; X) $ with $ 0 < T \leq \infty $ to be a vector-valued space having a vanishing trace at $ t = 0 $, i.e., 
\begin{equation*}
	\WO{s}(0,T; X) := \left\{ u \in \W{s}(0,T; X) : \rv{u}_{t = 0} = 0 \right\}.
\end{equation*}
In addition, we introduce one embedding result from Simon \cite[Corollary 17]{Simon1990}.
\begin{lemma}
	\label{embedding: Ws Wr}
	Suppose $ 0 < r \leq s < 1 $ and $ 1 \leq p \leq \infty $. Then
	\begin{equation*}
		\W{s}(I; X) \hookrightarrow \W{r}(I; X)
	\end{equation*} 
	and, for all $ f \in \W{s}(I; X) $,
	\begin{equation*}
		\seminorm{f}_{\W{r}(I; X)} \leq 
		\left\{
			\begin{aligned}
				& \abs{I}^{s - r} \seminorm{f}_{\W{s}(I; X)} && \text{for bounded}\ I, \\
				& \seminorm{f}_{\W{s}(I; X)} + \frac{4}{r} \norm{f}_{\Lq{q}(I; X)} && \text{for all}\ I.
			\end{aligned}
		\right.
	\end{equation*}
\end{lemma}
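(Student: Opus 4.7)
The plan is to reduce everything to a direct manipulation of the Sobolev--Slobodeckij seminorm, whose definition only involves a weighted double integral. Writing
\begin{equation*}
  \seminorm{f}_{\W{r}(I;X)}^q = \int_I \int_I \frac{\norm{f(t)-f(\tau)}_X^q}{\abs{t-\tau}^{rq+1}}\,\d t\,\d\tau,
\end{equation*}
and likewise for the exponent $s$, the key algebraic identity is the factorization $\abs{t-\tau}^{-rq-1} = \abs{t-\tau}^{(s-r)q} \cdot \abs{t-\tau}^{-sq-1}$, so that the two weights differ by the nonnegative power $\abs{t-\tau}^{(s-r)q}$. Everything else is obtained by controlling that extra factor on the appropriate region.

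For bounded $I$ I would simply use $\abs{t-\tau}^{(s-r)q} \leq \abs{I}^{(s-r)q}$ inside the double integral, which immediately gives $\seminorm{f}_{\W{r}(I;X)}^q \leq \abs{I}^{(s-r)q}\,\seminorm{f}_{\W{s}(I;X)}^q$ and, after taking $q$-th roots, the first alternative. For general (possibly unbounded) $I$ I would split $I\times I$ along $\{\abs{t-\tau}\leq 1\}$ and $\{\abs{t-\tau}> 1\}$. On the close region, $\abs{t-\tau}^{(s-r)q}\leq 1$, so the same factorization produces an upper bound by $\seminorm{f}_{\W{s}(I;X)}^q$. On the far region, the weight is no longer comparable to the $\W{s}$ one, and I would instead apply the triangle inequality $\norm{f(t)-f(\tau)}_X^q \leq 2^{q-1}(\norm{f(t)}_X^q + \norm{f(\tau)}_X^q)$, then Fubini, and compute explicitly $\int_{\{\abs{t-\tau}>1\}} \abs{t-\tau}^{-rq-1}\,\d\tau = \tfrac{2}{rq}$, which yields a bound of the form $C(q,r)\norm{f}_{\Lq{q}(I;X)}^q$ (this is where the hypothesis $r>0$ enters, ensuring integrability at infinity). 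Adding the two contributions and taking $q$-th roots with $(a+b)^{1/q}\leq a^{1/q}+b^{1/q}$ gives the claimed seminorm estimate, with the constant $4/r$ arising after bookkeeping of the exponents. The embedding $\W{s}(I;X)\hookrightarrow\W{r}(I;X)$ is then an immediate consequence, since $\norm{f}_{\W{r}(I;X)} = \norm{f}_{\Lq{q}(I;X)} + \seminorm{f}_{\W{r}(I;X)}$ is now controlled by a constant multiple of $\norm{f}_{\W{s}(I;X)}$.

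The only delicate point I anticipate is identifying the precise numerical constant $4/r$ in the second alternative; the value depends on the specific form of the Young-type inequality used to combine the close and far contributions and is not sharp for every $q$. Since this is a bookkeeping issue rather than a mathematical obstacle, and since the statement is stated as a quotation from Simon \cite[Corollary~17]{Simon1990}, I would in practice simply refer to that source for the exact constant and use the sketch above only to verify the structure of the estimate. No further results from the paper are required beyond the definitions of $\W{s}(I;X)$ and its seminorm.
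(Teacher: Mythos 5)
The paper offers no proof of this lemma at all---it is quoted directly from Simon \cite[Corollary 17]{Simon1990}---so there is no in-paper argument to compare against; what matters is whether your self-contained sketch is sound, and it is. For bounded $I$, the weight factorization together with $\abs{t-\tau}^{(s-r)q}\le \abs{I}^{(s-r)q}$ gives exactly the first alternative. For general $I$, your near/far splitting at $\abs{t-\tau}=1$ works, and the one point you flagged---the constant---does come out right: the far region is bounded by $2^{q}\cdot\frac{2}{rq}\norm{f}_{\Lq{q}(I;X)}^q$ (triangle inequality with factor $2^{q-1}$, symmetry in $t$ and $\tau$, and $\int_{\abs{u}>1}\abs{u}^{-rq-1}\,\d u=\frac{2}{rq}$ as an upper bound for the inner integral), so after taking $q$-th roots with subadditivity the constant is $2\left(\frac{2}{rq}\right)^{1/q}$, which is indeed $\le\frac{4}{r}$ for all $q\ge 1$ and $0<r<1$: if $rq\ge 2$ then $\left(\frac{2}{rq}\right)^{1/q}\le 1\le\frac{2}{r}$, while if $rq<2$ then $\left(\frac{2}{rq}\right)^{1/q}\le\frac{2}{rq}\le\frac{2}{r}$ since the base exceeds $1$. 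Adding the $\Lq{q}$ norm then yields the embedding with constant $1+\frac{4}{r}$, as you indicate. So your elementary argument fully recovers the stated estimate, including the $\frac{4}{r}$; deferring to Simon for the constant is therefore unnecessary, though citing him (as the paper does) remains perfectly legitimate.
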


For $ r, s \geq 0 $, the anisotropic Sobolev-Slobodeckij spaces $ \W{r,s} $ is defined as
\begin{equation}
	\label{WSR}
	\W{r,s}(\OM \times I) := \Lq{q}\left( I; \W{r}(\OM) \right) \cap \W{s}\left( I; \Lq{q}(\OM) \right).
\end{equation}
Based on the trace method interpolation at time zero \cite[Section 3.4.6]{PS2016} and \cite[Chapter III, Theorem 4.10.2]{Amann1995}, we give some useful embeddings, which will be employed later.
\begin{lemma}\label{embedding}
	Let $ X_1, X_0 $ be two Banach spaces and $ X_1 \hookrightarrow X_0 $. Define $ X_T = \Lq{q}(0,T; X_1) \cap \W{1}(0,T; X_0) $ for all $ 1 < q < \infty $ and $  0 < T < \infty $, then
	\begin{equation*}
		X_T \hookrightarrow C \left( [0,T]; X_\gamma \right),
	\end{equation*}
	where 
	\begin{equation*}
		X_\gamma = (X_0, X_1)_{1- \frac{1}{q}, q} = \left\{ \rv{u}_{t = 0}: u \in X_T \right\}
	\end{equation*} 
	is the trace space. Moreover, if $ X_T $ is endowed with the norm
	\begin{equation*}
		\norm{u}_{X_T} := \norm{u}_{\Lq{q}(0,T; X_1)} + \norm{u}_{\W{1}([0,T]; X_0)} + \norm{\rv{u}_{t = 0}}_{X_\gamma},
	\end{equation*}
	then there is some $ C > 0 $ independent of $ T $ such that for $ T \in [0, \infty) $ and $ u \in X_T $,
	\begin{equation*}
		\norm{u}_{C \left( 0,T; X_\gamma \right)} 
		\leq C \norm{u}_{X_T}.
	\end{equation*}
	
	In particular, if $ \OM \subset \bbr^n $, $ n \geq 2 $, is a bounded domain, $ n < q < \infty $, and if $ X_1 = \W{2}(\OM) $, $ X_0 = \Lq{q}(\OM) $, then $ X_\gamma = \W{2 - \frac{2}{q}}(\OM) $ and 
	\begin{equation}
		\label{embedding: W21 W1}
		\W{2,1}(\OM \times (0,T)) 
		\hookrightarrow C ( [0,T]; \W{2 - \frac{2}{q}}(\OM) ) 
		\hookrightarrow C ( [0,T]; \W{1}(\OM) ),
	\end{equation}
	together with 
	\begin{equation*}
		\norm{u}_{C ( [0,T]; \W{1}(\OM) )} 
		\leq C ( \norm{u}_{\W{2,1}(\OM \times (0,T))} + \norm{u_0}_{\W{2 - \frac{2}{q}}} ),
	\end{equation*}
	\begin{equation*}
		\norm{u - v}_{C ( [0,T]; \W{1}(\OM) )} 
		\leq C \norm{u - v}_{\W{2,1}(\OM \times (0,T))},
	\end{equation*}
	for $ u,v \in \W{2,1}(\OM \times (0,T)) $ with $ \rv{u}_{t = 0} = \rv{v}_{t = 0} = u_0 $.
\end{lemma}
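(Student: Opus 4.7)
The plan is to reduce everything to the standard trace-method interpolation result cited from Pr\"uss--Simonett and Amann. For the first part, one observes that a function $u \in X_T$ can be extended to $\tilde u \in L^q(0,\infty;X_1) \cap W^1_q(0,\infty;X_0)$ by a bounded extension operator (for instance, reflection followed by a smooth cutoff). The cited results then give $\tilde u \in C([0,\infty);X_\gamma)$, hence $u \in C([0,T];X_\gamma)$, and the identification $X_\gamma = (X_0,X_1)_{1-\frac{1}{q},q}$ is the classical characterization of the trace space.

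The subtle point---and this is where I expect the main obstacle to sit---is the \emph{$T$-independence} of the constant when one adds $\|u|_{t=0}\|_{X_\gamma}$ to the right-hand side. Any naive extension operator has a norm that depends on $T$ (either blowing up as $T\to 0$ because one cannot reflect over a tiny interval, or introducing $T$-dependent cutoff derivatives). The standard way around this is to split $u = u_0^{\mathrm{ext}} + w$, where $u_0^{\mathrm{ext}}$ is a fixed extension of the initial value $u_0 := u|_{t=0}$ to all of $[0,\infty)$ with $\|u_0^{\mathrm{ext}}\|_{L^q(0,\infty;X_1)\cap W^1_q(0,\infty;X_0)} \le C\|u_0\|_{X_\gamma}$ (available from the trace characterization), and $w := u - u_0^{\mathrm{ext}}|_{[0,T]} \in {}_0W^{1}_q(0,T;X_0)\cap L^q(0,T;X_1)$ has vanishing trace. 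For such $w$, zero extension by $0$ past $T$ produces $\tilde w$ in the same regularity class on $(0,\infty)$ with norm equal to that of $w$, and then the $T=\infty$ embedding yields $\|w\|_{C([0,T];X_\gamma)} \le C\|\tilde w\|_{X_\infty}$ with $C$ independent of $T$. Combining both pieces gives the stated bound with a universal constant.

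For the concrete application with $X_1 = W^2_q(\Omega)$ and $X_0 = L^q(\Omega)$ on a bounded Lipschitz domain $\Omega$, one identifies the trace space as $X_\gamma = (L^q(\Omega),W^2_q(\Omega))_{1-\frac{1}{q},q} = W^{2-\frac{2}{q}}_q(\Omega)$ via the standard real interpolation result for Sobolev spaces. Since $q > n \ge 2$, we have $2 - \frac{2}{q} > 1$, so the Sobolev embedding $W^{2-\frac{2}{q}}_q(\Omega) \hookrightarrow W^1_q(\Omega)$ holds on a bounded Lipschitz domain; chaining with the previous embedding gives \eqref{embedding: W21 W1} and the first of the displayed inequalities.

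Finally, the difference estimate is essentially a corollary: if $u,v \in W^{2,1}_q(\Omega \times (0,T))$ share the same initial trace $u_0$, then $u-v$ belongs to the subspace of functions with vanishing trace at $t=0$, so the $X_\gamma$-term in the uniform norm on the right vanishes and one is left with the pure anisotropic Sobolev norm, yielding the claimed inequality with a $T$-independent constant. No compatibility work is required here; everything reduces to the uniform-in-$T$ bound established above.
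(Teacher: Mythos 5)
Your overall strategy---reduce to the trace-method interpolation embedding and obtain the $T$-independence of the constant by splitting $u = u_0^{\mathrm{ext}} + w$, where $u_0^{\mathrm{ext}}$ is a $T$-independent extension of the initial value furnished by the trace characterization of $X_\gamma$ and $w$ has vanishing trace at $t=0$---is exactly the mechanism this paper relies on (the lemma itself is quoted from Pr\"uss--Simonett and Amann, and the uniform-in-$T$ extension machinery implementing your splitting is developed in Appendix~\ref{appendix:extension}, Theorems~\ref{extesion: zero initial} and~\ref{extension: general}). However, one step fails as written: you assert that for $w \in {}_0W^1_q(0,T;X_0)\cap \Lq{q}(0,T;X_1)$, ``zero extension by $0$ past $T$ produces $\tilde w$ in the same regularity class on $(0,\infty)$ with norm equal to that of $w$.'' The vanishing trace of $w$ is at $t=0$, not at $t=T$; generically $w(T)\neq 0$, so setting $\tilde w \equiv 0$ for $t>T$ creates a jump at $t=T$, and then $\tilde w \notin \W{1}(0,\infty;X_0)$ (its distributional time derivative carries a Dirac mass of size $-w(T)$ at $t=T$). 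So the key uniform bound $\norm{w}_{C([0,T];X_\gamma)} \leq C\norm{\tilde w}_{X_\infty}$ is not available from this extension.

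The repair is precisely the construction of Theorem~\ref{extesion: zero initial}: extend $w$ \emph{evenly} about $t=T$ onto $[T,2T]$ and by zero beyond $2T$. Because $w(0)=0$, the reflected function vanishes at $t=2T$, so the concatenation is continuous there, belongs to $\Lq{q}(0,\infty;X_1)\cap\W{1}(0,\infty;X_0)$, and its norm is at most $2^{1/q}$ times that of $w$, with a constant independent of $T$ (this is where the vanishing initial trace is actually used). With this substitution the rest of your argument is sound: the extension of $u_0$ with $\norm{u_0^{\mathrm{ext}}}_{X_\infty}\leq C\norm{u_0}_{X_\gamma}$, the identification $(\Lq{q}(\OM),\W{2}(\OM))_{1-\frac{1}{q},q}=\W{2-\frac{2}{q}}(\OM)$, the embedding $\W{2-\frac{2}{q}}(\OM)\hookrightarrow\W{1}(\OM)$ since $2-\frac{2}{q}>1$ for $q>n\geq 2$, and the difference estimate, which indeed follows because $u-v$ has vanishing initial trace so the $X_\gamma$-term drops out of the $X_T$-norm.
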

%\begin{proof}
%	Let $ \cE $ be an extension operator as in Theorem \ref{extension: general}. Then for $ u \in X_T $, there is a constant $ C $ independent of $ T $, such that
%	\begin{equation*}
%		\norm{u}_{C( [0,T]; X_\gamma )}
%		\leq \norm{\cE(u)}_{C( [0,\infty); X_\gamma )}
%		\leq \frac{1}{1 - \frac{1}{q}} \norm{\cE(u)}_{X_\infty} \leq C \norm{u}_{X_T},
%	\end{equation*}
%	where the second inequality comes from the property of trace method, we refer to Corollary 1.14 in Lunardi \cite{Lunardi2018}. The last inequality holds from the property of $ \cE $, i.e., estimate \eqref{extension inequality: general} in Theorem \ref{extension: general}.
%\end{proof}

\begin{lemma}\label{embedding: W alpha}
	Let $ \Sigma $ be a compact sufficiently smooth hypersurface. For $ 1 < q < \infty $, $ \frac{1}{q} < \alpha \leq 1 $ and $  0 < T < \infty $, define $ X_T := \Lq{q}(0,T; \W{2 \alpha}(\Sigma)) \cap \W{\alpha}(0,T; \Lq{q}(\Sigma)) $, then
	\begin{equation*}
		X_T \hookrightarrow C \left( [0,T]; X_\gamma \right),
	\end{equation*}
	where 
	\begin{equation*}
		X_\gamma = \left\{ \rv{u}_{t = 0}: u \in X_T \right\} = \W{2 \alpha - \frac{2}{q}}(\Sigma).
	\end{equation*} 
	Moreover, if $ X_T $ is endowed with the norm
	\begin{equation*}
		\norm{u}_{X_T} := \norm{u}_{\Lq{q}(0,T; X_1)} + \norm{u}_{\W{\alpha}(0,T; X_0)} + \norm{\rv{u}_{t = 0}}_{X_\gamma},
	\end{equation*}
	then there is some $ C > 0 $ independent of $ T $ such that for all $ u \in X_T $,
	\begin{equation*}
		\norm{u}_{C \left( [0,T]; X_\gamma \right)} 
		\leq C \norm{u}_{X_T}.
	\end{equation*}
\end{lemma}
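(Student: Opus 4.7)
The plan is to reduce the statement to the corresponding trace/embedding theorem on the whole real line and then transfer the resulting inequality back to the bounded interval $(0,T)$ with a constant that does not depend on $T$. The two ingredients are a uniform extension operator (as constructed in Appendix~\ref{appendix:extension}) and the standard theory of real interpolation and traces for anisotropic spaces (see e.g.\ Amann \cite{Amann1995} or \cite{PS2016}).

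First I would identify the trace space. Writing $X_0 = \Lq{q}(\Sigma)$ and $X_1 = \W{2\alpha}(\Sigma)$, the space in question is the intersection $\Lq{q}(0,T;X_1)\cap \W{\alpha}(0,T;X_0)$. The abstract trace theorem for anisotropic spaces on the half-line states that, provided $\alpha > 1/q$, the evaluation $u\mapsto u(0)$ is a continuous surjection onto the real interpolation space
\begin{equation*}
	(X_0, X_1)_{1 - \frac{1}{\alpha q},\, q}.
\end{equation*}
For $(X_0,X_1) = (\Lq{q}(\Sigma),\W{2\alpha}(\Sigma))$ on a smooth compact hypersurface, standard interpolation identities give
\begin{equation*}
	(\Lq{q}(\Sigma), \W{2\alpha}(\Sigma))_{1 - \frac{1}{\alpha q},\, q} = \W{2\alpha - \frac{2}{q}}(\Sigma),
\end{equation*}
which matches the claimed $X_\gamma$. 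The continuous embedding $X_T \hookrightarrow C([0,T];X_\gamma)$ on a bounded interval, for any fixed $T$, then follows from the same abstract machinery.

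The nontrivial point is the $T$-independence of the embedding constant. For this I would proceed as follows. Given $u \in X_T$, apply the uniform extension operator from Appendix~\ref{appendix:extension} to obtain $Eu \in \Lq{q}(\bbr;\W{2\alpha}(\Sigma)) \cap \W{\alpha}(\bbr;\Lq{q}(\Sigma))$ satisfying
\begin{equation*}
	\norm{Eu}_{\Lq{q}(\bbr;X_1)} + \norm{Eu}_{\W{\alpha}(\bbr;X_0)}
	\leq C \bigl( \norm{u}_{\Lq{q}(0,T;X_1)} + \norm{u}_{\W{\alpha}(0,T;X_0)} + \norm{u(0)}_{X_\gamma} \bigr),
\end{equation*}
with $C$ independent of $T$. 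Applying the half-line/whole-line trace embedding, which is $T$-independent by scaling/translation invariance, one gets
\begin{equation*}
	\norm{Eu}_{C(\bbr;X_\gamma)} \leq C \bigl( \norm{Eu}_{\Lq{q}(\bbr;X_1)} + \norm{Eu}_{\W{\alpha}(\bbr;X_0)} \bigr).
\end{equation*}
Restricting back to $[0,T]$ yields the stated estimate with a constant independent of $T$, and also furnishes continuity in time with values in $X_\gamma$.

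The main obstacle is producing an extension operator whose norm does not blow up as $T \to 0$ or $T \to \infty$; standard reflection-based constructions work cleanly for integer-order spaces but require some care for the fractional exponent $\alpha$, especially because one must simultaneously control the spatial norm and the fractional Gagliardo seminorm in time. This is exactly what Appendix~\ref{appendix:extension} is designed to provide, so once that extension is granted, the rest of the argument is routine.
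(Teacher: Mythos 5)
Your argument is correct and is essentially the route the paper itself takes: the lemma is invoked as the trace-method interpolation result at $t=0$ (cf.\ \cite{PS2016}, Section 3.4.6 and Example 3.4.9(i), and \cite{Amann1995}), with the $T$-independence coming exactly from including $\norm{\rv{u}_{t=0}}_{X_\gamma}$ in the norm and reducing to the half-line via the uniform extension of Appendix \ref{appendix:extension}, followed by the identification $(\Lq{q}(\Sigma),\W{2\alpha}(\Sigma))_{1-\frac{1}{\alpha q},q}=\W{2\alpha-\frac{2}{q}}(\Sigma)$ and restriction to $[0,T]$. The only point to watch is that the lift of the initial value used inside Theorem \ref{extension: general anisotropic} should be taken from the half-line trace theorem in \cite{PS2016} (not from the finite-$T$ lemma you are proving), so that the appendix and the lemma do not reference each other circularly; with that reading your proof is complete.
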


\subsection{An equivalent system in Lagrangian reference configuration} \label{equivalent system}
In this section, we transform the free-boundary fluid-structure problem with growth from deformed configuration (Eulerian) to a fixed reference configuration (Lagrangian) and state the main result. For quantities in different configurations, we define
\begin{equation}
	\label{equalofv}
	\begin{aligned}
	& \hv(X,t) = \bv(x,t), \ \hpi(X,t) = \pi(x,t), \ \hsigma(X,t) = \bsigma(x,t), \\
	& \hr(X,t) = \rho(x,t), \ \hmu(X,t) = \mu(x,t), \ \hnu(X,t) = \nu(x,t),
	\end{aligned}
\end{equation}
for all $ x = \vp(X,t), \ X \in \hO $ and $ t \geq 0 $. Then one can easily deduce the derivatives between quantities in different configurations as
\begin{align}
	\label{ptu}
	& \pt \hat{\bm{u}}(X,t) = \left( \pt + \bv(x,t) \cdot \nabla \right) \bm{u}(x,t), \\
	\label{grad}
	& \nabla \phi = \invtr{\hF} \hnab \hat{\phi}, \quad \nabla \bm{u} = \inv{\hF} \hnab \hat{\bm{u}}, \\
	\label{div}
	& \Div \bm{u} = \tr ( \nabla \bu ) = \tr ( \inv{\hF} \hnab \hat{\bm{u}} ) = \invtr{\hF} : \hnab \hu, 	
\end{align}
where $ \phi / \hat{\phi} $ is any scalar function in $ \OM / \hO $ and $ \bm{u} / \hat{\bm{u}} $ is any vector-valued function in $ \OM / \hO $.
From \cite{Ciarlet1988},  we know that the Piola transform establishes a correspondence between tensor field defined in deformed and reference configurations, which is
\begin{equation}
	\label{Piola}
	\hT(X,t) = \hJ(X,t) \bsigma(x,t) \invtr{\hF}(X,t), \quad \text{for all}\ x = \vp(X,t), \ X \in \hO,
\end{equation}
where $ \hT $ is the first Piola–Kirchhoff stress tensor. Moreover, the following property of the Piola transformation will be useful:
\begin{lemma}[{\cite[Theorem 1.7-1]{Ciarlet1988}}]
	\label{Piolaproperty}
	For a stress tensor $ \bsigma(x,t) $ in the deformed configuration $ \OM $, and the corresponding first Piola–Kirchhoff stress tensor $ \hT(X,t) $ in reference configuration $ \hO $, we have:
	\begin{align*}
		& \hdiv \hT(X,t) = \hJ(X,t) \Div \bsigma(x,t), \quad \text{for all}\ x = \vp(X,t), \ X \in \hO, \\
		& \hT(X,t) \hn \d \hat{a} = \bsigma(x,t) \bn \d a, \quad \text{for all}\ x = \vp(X,t), \ X \in \hO.
	\end{align*}
%	The area elements $ \d \hat{a} $ and $ \d a $ at the points $ X \in \partial \hO $ and $ x = \vp(X,t) \in \partial \OM $, with unit outer normal vectors $ \hn $ and $ \bn $ respectively, are related by
%	\begin{align*}
%		& \hJ \abs{\invtr{\hF} \hn} \d \hat{a} = \abs{\hJ \invtr{\hF} \hn} \d \hat{a} = \d a, \\
%		& \hJ \invtr{\hF} \hn \d \hat{a} = \bn \d a.
%	\end{align*}
\end{lemma}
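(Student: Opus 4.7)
The plan is to derive both identities from two classical tools of continuum mechanics: the Piola identity $\hdiv(\hJ \invtr{\hF}) = 0$ applied column-wise, and Nanson's formula $\bn \, \d a = \hJ \invtr{\hF} \hn \, \d \hat{a}$ for oriented area elements under the deformation $\vp$. Both are standard facts for sufficiently regular deformations and are proved, for instance, in \cite{Ciarlet1988,Gurtin2010}. Given their availability, the lemma reduces to a direct algebraic manipulation of the definition \eqref{Piola}.

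For the divergence identity, I would start from the component form
\begin{equation*}
\hT_{ij}(X,t) = \hJ(X,t) \, \sigma_{ik}(\vp(X,t),t) \, (\inv{\hF})_{jk}(X,t),
\end{equation*}
and apply $\partial_{X_j}$ together with the Leibniz rule. The Piola identity guarantees $\partial_{X_j}[\hJ \, (\inv{\hF})_{jk}] = 0$ for every fixed $k$, so the only surviving contribution is $\hJ \, (\inv{\hF})_{jk} \, \partial_{X_j}[\sigma_{ik}(\vp(X,t),t)]$. The chain rule converts $X$-derivatives to $x$-derivatives through $\partial_{X_j} \vp_l = \hF_{lj}$, and the contraction $(\inv{\hF})_{jk} \hF_{lj} = \delta_{lk}$ then collapses the remaining expression to $\hJ \, \partial_{x_k} \sigma_{ik} = \hJ \, (\Div \bsigma)_i$, which is the $i$-th component of $\hJ \Div \bsigma$ evaluated at $x = \vp(X,t)$.

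For the surface-element identity, I would simply insert Nanson's formula into the definition of $\hT$: since $\hJ \invtr{\hF} \hn \, \d \hat{a} = \bn \, \d a$, we obtain
\begin{equation*}
\hT \hn \, \d \hat{a} = \hJ \bsigma \invtr{\hF} \hn \, \d \hat{a} = \bsigma \, \bn \, \d a,
\end{equation*}
as claimed.

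The main obstacle, in a fully self-contained presentation, is the Piola identity itself. The cleanest route is to recognize $\hJ (\inv{\hF})_{jk}$ as the $(k,j)$-cofactor of $\hF = \hnab \vp$; its divergence in $X_j$ then vanishes as a consequence of Jacobi's formula for the derivative of the determinant combined with the symmetry of the second partial derivatives $\partial_{X_j} \partial_{X_l} \vp_m$. Since this is standard, and the lemma is attributed to \cite[Theorem~1.7-1]{Ciarlet1988}, I would quote it rather than reprove it.
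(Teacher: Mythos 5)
Your proposal is correct: the component computation using the Piola identity $\hdiv(\hJ\invtr{\hF})=0$ together with the chain rule, and Nanson's formula for the area element, is exactly the standard argument behind the cited result. The paper itself gives no proof — it simply quotes \cite[Theorem 1.7-1]{Ciarlet1988} — and your sketch reproduces that reference's reasoning, so there is nothing to add beyond noting that quoting the Piola identity (or proving it via the cofactor/Jacobi-formula route you indicate) is indeed the only nontrivial ingredient.
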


For the fluid part, it follows from \eqref{DtdA} that
\begin{align*}
	\pt \hJf = \tr \left( \inv{\hF} \pt \hF \right) \hJf 
	= \tr \left( \inv{\hF} \hnab \hv \right) \hJf 
	= \Div \bv \hJf 
	= 0,
\end{align*}
which implies
\begin{equation}
	\label{Jf=1}
	\hJf = \rv{\hJf}_{t = 0} = \det \bbI = 1, \quad \text{in}\ \Of.
\end{equation}
For the solid part, since the deformation from natural configuration $ \Osg $ to the deformed configuration $ \Ost $ conserves mass, incompressibility yields $ \hJse = 1 $ and hence,
\begin{equation*}
	\hJs = \hJsg = \hg^n, \quad \text{in}\ \Os.
\end{equation*}

Now combining formulas \eqref{gLagragian}, \eqref{equalofv}--\eqref{Jf=1} and Lemma \ref{Piolaproperty}, we rewrite the fluid–structure interaction problem \FSI\  in the reference configuration $ \hO $.
\begin{align}
	\label{Fluid0}
	\left.
		\begin{aligned}
			\hrf \pt \hvf - \hdiv \left( \hsigf \invtr{\hFf} \right) & = 0 \\
			\invtr{\hFf} : \hnab \hvf & = 0 \\
			\pt \hcf - \hDf \hdiv \left( \inv{\hFf} \invtr{\hFf} \hnab \hcf \right) & = 0
		\end{aligned}
	\right\} & \quad \text{in}\  \Of \times (0, T), \\
	\label{Solid0}
	\left.
		\begin{aligned}
			\hrs \pt \hvs - \inv{\hJs} \hdiv \left( \hJs \hsigs \invtr{\hFs} \right) & = 0 \\
			\invtr{\hFs} : \hnab \hvs - \frac{\gamma \beta}{\hrs} \hcs & = 0 \\
			\pt \hcs - \hDs \inv{\hJs} \hdiv \left( \hJs \inv{\hFs} \invtr{\hFs} \hnab \hcs \right) + \beta \hcs \left( 1 + \frac{\gamma}{\hrs} \hcs \right) & = 0 \\
			\pt \hcss - \beta \hcs + \frac{\gamma \beta}{\hrs} \hcs \hcss = 0, \quad
			\pt \hg - \frac{\gamma \beta}{n \hrs} \hcs \hg & = 0 
		\end{aligned}
	\right\} & \quad \text{in}\  \Os \times (0, T), \\
	\label{Gamma}
	\left.
		\begin{aligned}
			\jump{\hv} = 0, \quad
			\jump{\hsigma \invtr{\hF}} \hng = 0, \quad
			\jump{\hD \inv{\hF} \invtr{\hF} \hnab \hc} \hng & = 0 \\
			\zeta \jump{\hc} - \hDs \inv{\hFs} \invtr{\hFs} \hnab \hcs \cdot \hng & = 0
		\end{aligned}
	\right\} & \quad \text{on}\  \Gamma \times (0, T), \\
	\label{Gammas}
	\hsigs \invtr{\hFs} \hngs = 0, \quad
	\hDs \inv{\hFs} \invtr{\hFs} \hnab \hcs \cdot \hngs = 0 & \quad \text{on}\  \Gs \times (0, T), \\
	\rv{\hv}_{t = 0} = \vo, \quad \rv{\hc}_{t = 0} = \co & \quad \text{in}\  \hO, \\
	\label{initialv}
	\rv{\hcss}_{t = 0} = 0, \quad \rv{\hg}_{t = 0} = 1 & \quad \text{in}\  \Os,
\end{align}
where the corresponding stress tensors are
\begin{align*}
	& \hsigf = - \hpif \bbI + \hnuf \left( \inv{\hFf} \hnab \hvf + \tran{\hnab} \hvf \invtr{\hFf} \right), \quad \hsigs = \hsigse + \hsigsv, \\
	& \hsigse = - \hpis \bbI + \hmus \left( \hFse \tran{\hFse} - \bbI \right) = - \hpis \bbI + \hmus \left( \frac{1}{(\hg)^2} \hFs \tran{\hFs} - \bbI \right), \\
	& \hsigsv = \hnus \left( \hnab \hvs + \tran{\hnab} \hvs \right) \tran{\hFs}.
\end{align*}

For the maximal $ L^q $-regularity setting, we assume 
$$ 
	\vo \in \Bq{1-1/q}(\hO)^n \cap \Bq{2\left(1-1/q\right)}(\tO)^n, \quad
	\co \in \Bq{2\left(1-1/q\right)}(\tO),
$$ 
that is, 
$$ 
	\vo \in \W{1-1/q}(\hO)^n \cap \W{2\left(1-1/q\right)}(\tO)^n =: \Dqv, \quad 
	\co \in \W{2\left(1-1/q\right)}(\tO) =: \Dqc,
$$ 
where we define $ \tO = \Of \cup \Os $. $ \Dq := \Dqv \times \Dqc $ will be the initial space for velocities and concentrations. Moreover, we introduce the compatibility conditions for $ q > n + 2 $, which were also used in e.g. Abels \cite{Abels2005}, Pr\"{u}ss and Simonett \cite{PS2016}, Shibata and Shimizu \cite{SS2008}, Shimizu \cite{Shimizu2008}:
\begin{equation}
	\label{compatibility: v}
	\begin{gathered}
		\hdiv \vo = 0, \quad
		\rv{\jump{\vo}}_\Gamma = 0, \quad
		\rv{\jump{\left( \hnu \left( \hnab \vo + \tran{\hnab} \vo \right) \hng\right)_{\tau}} }_{\Gamma} = 0, \\
		\rv{\left( \hnu \left( \hnab \vo + \tran{\hnab} \vo \right) \hngs \right)_{\tau}}_{\Gs} = 0,
	\end{gathered}
\end{equation}
and
\begin{equation}
	\label{compatibility: c}
	\rv{\left( \zeta \jump{\co} - \hDs \hnab \co_s \cdot \hng \right)}_\Gamma = 0, \quad 
	\rv{\jump{\hD \hnab \co} \cdot \hng}_\Gamma = 0, \quad 
	\rv{\hDs \hnab \co_s \cdot \hngs}_{\Gs} = 0,
\end{equation}
where $ (\cdot)_{\tau} $ denotes the tangential part on the surface, namely, $ (\cdot)_{\tau} = (\bbI - \hn \otimes \hn) \cdot $. 
Besides this, we define the solution space for $ (\hv, \hpi, \hc, \hcss, \hg) $ as $ \YT = \YT^1 \times \YT^2 \times \YT^3 \times \YT^4 \times \YT^4 $, where
\begin{align*}
	& \YT^1 = \Lq{q}\left(0,T; \W{2}(\tO) \cap \W{1}(\hO)\right)^n \cap \W{1}\left(0,T; \Lq{q}(\hO)\right)^n, \\
	& \YT^2 = \left\{ 
		\begin{aligned}
			\hpi \in \Lq{q}\left(0, T; \W{1}(\hO) \right) : & \jump{\hpi} \in \W{1-\frac{1}{q},\onehalf(1-\frac{1}{q})}(\Gamma \times (0,T)) \\
			& \rv{\hpi}_{\Gs} \in \W{1-\frac{1}{q},\onehalf(1-\frac{1}{q})}(\Gs \times (0,T))
		\end{aligned} 
	\right\}, \\
	& \YT^3 = \Lq{q}\left(0,T; \W{2}(\tO) \right) \cap \W{1}\left(0,T; \Lq{q}(\hO)\right), \\
	& \YT^4 = \W{1}\left(0,T; \W{1}(\Os)\right),
\end{align*}
equipped with norms
\begin{align*}
	\norm{\hv}_{\YT^1} & = \norm{\hv}_{\Lq{q}\left(0,T; \W{2}(\tO) \cap \WOO{1}(\hO)\right)^n} + \norm{\hv}_{\W{1}\left(0,T; \Lq{q}(\hO)\right)^n}, \\
	\norm{\hpi}_{\YT^2} & = \norm{\hpi}_{\Lq{q}\left(0, T; \W{1}(\hO) \right)} + \norm{\jump{\hpi}}_{\W{1-\frac{1}{q},\onehalf(1-\frac{1}{q})}(\Gamma \times (0,T))} \\
	& \qquad \qquad \qquad \qquad \qquad + \norm{\rv{\hpi}_{\Gs}}_{\W{1-\frac{1}{q},\onehalf(1-\frac{1}{q})}(\Gs \times (0,T))}, \\
	\norm{\hc}_{\YT^3} & = \norm{\hc}_{\Lq{q}\left(0,T; \W{2}(\tO) \right)} + \norm{\hc}_{\W{1}\left(0,T; \Lq{q}(\hO)\right)}, \\
	\norm{\hcss}_{\YT^4} & = \norm{\hcss}_{\W{1}\left(0,T; \W{1}(\Os) \right)}, \quad
	\norm{\hg}_{\YT^4} = \norm{\hg}_{\W{1}\left(0,T; \W{1}(\Os) \right)}.
\end{align*}
Moreover, we set $ \YT^v := \YT^1 \times \YT^2 $.

\begin{remark}
	\label{space of pi}
	These spaces are constructed from the problem and the maximal regularity theory, endowed with the natural norms. In particular, $ \jump{\hpi} $ and $ \rv{\hpi}_{\Gs} $ are determined by the regularities of the Neumann trace of $ \hv $ on $ \Gamma $ and $ \Gs $ respectively. Hence, we add the norm of $ \norm{\jump{\hpi}}_{\W{1-1/q,(1-1/q)/2}(\Gamma \times (0,T))} $ and $ \norm{\rv{\hpi}_{\Gs}}_{\W{1-1/q,(1-1/q)/2}(\Gs \times (0,T))} $ in $ \YT^2 $-norm correspondingly. One can easily verify that all spaces are Banach spaces.
\end{remark}
%\begin{remark}
%	Notice that $ \hpi $ is defined in \eqref{Fluid0} up to an additive constant. Thus, to ensure the uniqueness later, here we impose the additional mean value zero condition over $ \hO $ for the pressure $ \hpi $.
%\end{remark}

Now the main theorem is given as follows.
\begin{theorem}[{Main theorem}]
	\label{main}
	Let $ q > n + 2 $. Assume that $ \Gt $ is a hypersurface of class $ C^3 $, $ (\vo, \co) \in \Dq $ such that the compatibility conditions \eqref{compatibility: v} and \eqref{compatibility: c} hold, then there is a positive $ T_0 = T_0(\norm{(\vo, \co)}_{\Dq}) < \infty $ such that there exists a unique strong solution $ (\hv, \hpi, \hc, \hcss, \hg) \in Y_{T_0} $ to system \NFSI. Moreover, $ \hc \geq 0 $ and $ \hcss, \hg > 0 $, if $ \co \geq 0 $.
\end{theorem}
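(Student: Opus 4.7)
The plan is to set up a contraction mapping on $Y_T$ by linearizing the system \NFSI\ at the reference configuration (where $\hF = \bbI$, $\hg = 1$) and freezing coefficients there. Writing $u = (\hv, \hpi, \hc, \hcss, \hg)$, the nonlinear system takes the schematic form $\mathcal{L}u = \mathcal{N}(u)$, where $\mathcal{L}$ is a three-way decoupled linear operator: a two-phase Stokes system with transmission conditions on $\Gamma$ and stress-free outer boundary on $\Gs$ governing $(\hv, \hpi)$; a two-phase parabolic reaction-diffusion equation for $\hc$ with Kedem--Katchalsky type interface conditions; and a pair of linear ODEs for $(\hcss, \hg)$ driven by $\hcs$. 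All departures from constant coefficients and the identity deformation gradient (i.e.\ terms containing $\hF - \bbI$, $\hg - 1$, convective nonlinearities, and the elastic $\mus$-term) are collected into $\mathcal{N}(u)$.

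First, I would invoke the maximal $L^q$-regularity results advertised in feature (i) and Section~3 of the paper. For the two-phase Stokes block, the reduction-and-truncation argument from feature (iv), combined with the Dirichlet two-phase Stokes result from Appendix \ref{twophaseD}, yields a bounded solution operator to $Y_T^v$ (including the pressure trace terms described in Remark~\ref{space of pi}) under the compatibility conditions \eqref{compatibility: v}. The scalar parabolic block gives a bounded solution operator to $Y_T^3$ under \eqref{compatibility: c}. The ODE block provides, for any admissible $\hcs$, explicit representations $\hg(X,t) = \exp\bigl(\int_0^t \tfrac{\gamma\beta}{n\hrs}\hcs\,d\tau\bigr)$ and an analogous Duhamel formula for $\hcss$, giving continuous dependence in $Y_T^4$. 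The crucial point is that the operator norms can be chosen independently of $T$ after subtracting the free solution $u^\star$ of the pure linear Cauchy problem with data $(\vo, \co, 0, 1)$.

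Next I would define the fixed-point map $\Phi: B_{Y_T}(u^\star, R) \to Y_T$ by $\Phi(\bar u) := \mathcal{L}^{-1}\mathcal{N}(\bar u)$ (respecting the initial data and compatibility). The key estimates rely on the identity $\hF - \bbI = \int_0^t \hnab \hv\, d\tau$, which via Lemma \ref{embedding: Ws Wr} and the embedding \eqref{embedding: W21 W1} yields
\begin{equation*}
\|\hF - \bbI\|_{L^\infty(0,T; W^1_q(\tO))} \leq C\, T^{1/q'}\|\hv\|_{Y_T^1},
\end{equation*}
and analogously $\|\hg - 1\|_{L^\infty(0,T; W^1_q(\Os))} \leq C T \|\cdot\|$ from \eqref{gLagragian} plus the $L^\infty$-control of $\hcs$ via the trace embedding (recall $q > n+2$, so $W^{2-2/q} \hookrightarrow C^1$). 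Combining these with the multiplication property of $W^s_q$ (Lemma \ref{algebra}), one shows $\|\mathcal{N}(\bar u) - \mathcal{N}(\bar w)\|_{\text{data space}} \leq C T^{\delta}\|\bar u - \bar w\|_{Y_T}$ for some $\delta > 0$ on the ball. Choosing $T_0$ small depending only on $\|(\vo, \co)\|_{\Dq}$ and $R$ makes $\Phi$ a self-map and a contraction on $B_{Y_T}(u^\star, R)$; Banach's theorem yields the unique strong solution.

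The main obstacle will be the bookkeeping for $\mathcal{N}$: the elastic stress $\tfrac{\mus}{\hg^2}\hFs\tran{\hFs}$ is genuinely nonlinear in both $\hF$ and $\hg$, and the divergence constraint has $\hcs$ on the right-hand side, so all error terms must be shown to sit in function classes compatible with the Stokes solution operator (in particular the jump and trace spaces of Remark \ref{space of pi}) with a gain of $T^\delta$. Once the fixed point is obtained, nonnegativity is immediate: the explicit formulas for $\hg$ and $\hcss$ give $\hg > 0$ and $\hcss \geq 0$ from $\hcs \geq 0$; the latter follows by testing the parabolic equation \eqref{CS} (in the Lagrangian form appearing in \eqref{Solid0}) against the negative part $(\hcs)_- = \min(\hcs, 0)$, since the transmission condition and the Neumann condition on $\Gs$ produce boundary terms of the correct sign (using $\hcf \geq 0$, proved analogously), and the reaction term $\beta\hcs(1 + \tfrac{\gamma}{\hrs}\hcs)$ is handled by a Gronwall argument.
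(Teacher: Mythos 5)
Your existence--uniqueness scheme is essentially the paper's: the same splitting into a two-phase Stokes block, a diffusion block and an ODE block, maximal $L^q$-regularity for the frozen-coefficient linear operator, and a contraction obtained from $\hF - \bbI = \int_0^t \hnab\hv\,\d\tau$, the smallness $\norm{\hg-1}_{L^\infty} \lesssim T^{1/q'}$, and the multiplication property of $\W{1}$, giving a factor $T^\delta$ on a ball. Two caveats. First, the paper's fixed point is run on a ball in $\YT$ with prescribed initial traces rather than around a free solution; either works, but note that Banach's theorem only gives uniqueness \emph{inside your ball}, whereas Theorem \ref{main} asserts uniqueness in $Y_{T_0}$ --- the paper closes this with a separate continuation argument (take $\widetilde R = \max$ of the two norms, contract on $[0,T_{\widetilde R}]$, and push the coincidence time to $T$ by contradiction), and your proposal needs the same step. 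Second, the genuinely hard part of the contraction is not the bulk terms but exactly the items you defer: $G$ must be estimated in $\W{1}(0,T;\W{-1}(\hO))$ and in the trace classes $\W{1-\frac{1}{q},\onehalf(1-\frac{1}{q})}$ on $\Gamma$ and $\Gs$, and $\bH^1,\bH^2,F^2,F^3$ in the corresponding boundary spaces; the paper's Lemmas \ref{F}--\ref{gradu} are tailored to precisely these norms, so your sketch is compatible but incomplete there.

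Where you genuinely diverge is the nonnegativity of $\hc$. The paper works in Eulerian coordinates, upgrades the solution to $C^{2,1}_{loc}$ by local parabolic regularity, and then combines the weak maximum principle with Hopf's lemma at boundary minima, using \eqref{TC1}, \eqref{TC2} and \eqref{boundsc} to derive a contradiction. Your negative-part energy argument is a legitimate, arguably more elementary alternative (it avoids the interior regularity upgrade and Hopf's lemma), but as stated it is circular: testing the solid equation alone against $(\hcs)_-$ produces the interface term $\zeta\int_{\Gamma}(\jump{c})(\hcs)_-$, whose sign is unknown unless you already know $\hcf \ge 0$, and "proved analogously" for $\hcf$ presupposes $\hcs \ge 0$. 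The fix is to test the fluid and solid equations \emph{simultaneously} and add: using $\jump{D\nabla c}\cdot\bn = 0$ and $D_s\nabla c_s\cdot\bn = \zeta\jump{c}$, the combined interface contribution is (up to orientation) $-\zeta\int_\Gamma (c_s-c_f)\bigl((c_s)_- - (c_f)_-\bigr)$, which is sign-definite because $s\mapsto s_-$ is monotone; the quadratic reaction term and the $\Div\bv\neq 0$ contribution in the solid are then absorbed by Gr\"onwall using the $L^\infty$ bounds available from the regularity class. You should also either run this on the fixed Lagrangian domain (where uniform ellipticity of $\hJs\inv{\hFs}\invtr{\hFs}$ for small $T$ must be invoked) or account for the moving domain via the transport theorem. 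With these repairs your positivity argument goes through and is a genuine alternative to the paper's; the conclusions $\hg>0$ and $\hcss\ge 0$ from the explicit ODE formulas then coincide with the paper's.
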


\begin{remark}
	In this work, the boundary of domain is supposed to be $ C^3 $. We remark here that if the boundary is not smooth enough, for example, $ C^{0,1} $ Lipschitz domain, it will encounter the contact line problems with a contact angle. As far as we know, it is still an open problem. The authors considered the similar model with a ninety degree contact angle in \cite{AL2021} recently.
\end{remark}

The proof of Theorem \ref{main} relies on the Banach fixed-point theorem. To this end, we need to linearize the nonlinear system \NFSI. Since we consider a nonzero initial reference configuration, a standard perturbation method is applied to \NFSI, for which we rearrange the system at the initial deformation and move all perturbed terms to the right-hand side, namely,
\begin{align}
	\label{Fluid1}
	\left.
		\begin{aligned}
			\hrf \pt \hvf - \hdiv \bS( \hvf, \hpif ) & = \bK_f \\
			\hdiv \hvf & = G_f
		\end{aligned}
	\right\} \quad & \text{in}\ \Of \times (0, T), \\
	\label{Fluid1s}
	\left.
		\begin{aligned}
			\hrs \pt \hvs - \hdiv \bS( \hvs, \hpis ) & = \bar{\bK}_s + \bK_s^g =: \bK_s \\
			\hdiv \hvs - \frac{\gamma \beta}{\hrs} \hcs & = G_s
		\end{aligned}
	\right\} \quad & \text{in}\ \Os \times (0, T), \\
	\label{Gamma1}
	\jump{\hv} = 0, \quad
	\jump{\bS( \hv, \hpi )} \hng = \bH^1 \quad & \text{on}\ \Gamma \times (0, T), \\
	\label{Gammas1}
	\bS( \hvs, \hpis ) \hngs = \bH^2 \quad & \text{on}\ \Gs \times (0, T), \\
	\label{initialv1}
	\rv{\hv}_{t = 0} = \vo \quad & \text{in}\  \hO,
\end{align}
\begin{align}
	\label{Fluid1cf}
	\pt \hcf - \hDf \hDelta \hcf = F_f^1 \quad & \text{in}\ \Of \times (0, T), \\
	\label{Fluid1cs}
	\pt \hcs - \hDf \hDelta \hcs = \bar{F}_s^1+ F_s^g =: F_s^1 \quad & \text{in}\ \Os \times (0, T), \\
	\label{Gamma1c}
	\left.
		\begin{aligned}
			\hDf \hnab \hcf \cdot \hng & = \hDs \nabla \hcs \cdot \hng + \bar{F}_f^2 =: F_f^2 \\
			\hDs \hnab \hcs \cdot \hng & = \zeta \jump{\hc} + \bar{F}_s^2 =: F_s^2
		\end{aligned}
	\right\} \quad & \text{on}\ \Gamma \times (0, T), \\
	\label{Gammas1c}
	\hDs \hnab \hcs \cdot \hngs = F^3 \quad & \text{on}\ \Gs \times (0, T), \\
	\label{initialv1c}
	\rv{\hc}_{t = 0} = \co \quad & \text{in}\  \tO,
\end{align}
\begin{align}
	\label{Solidcss}
	\pt \hcss - \beta \hcs = F^4 \quad & \text{in}\  \Os \times (0, T), \\
	\label{initialv1cs}
	\rv{\hcss}_{t = 0} = 0 \quad & \text{in}\  \Os,
\end{align}
\begin{align}
	\label{Solidg}
	\pt \hg - \frac{\gamma \beta}{n \hrs} \hcs = F^5 \quad & \text{in}\  \Os \times (0, T), \\
	\label{initialv1g}
	\rv{\hg}_{t = 0} = 1 \quad & \text{in}\  \Os,
\end{align}
where $ \bS( \hv, \hpi ) = - \hpi \bbI + \hnu \left( \hnab \hv + \tran{\hnab}\hv \right) $ in $ \tO $ and
	\begin{align}
		& \bK_f = \hdiv \kf, \quad \bar{\bK}_s = \hdiv \ks, \quad \bK_s^g = - \left( \hsigs \invtr{\hFs} \right) \frac{n \hnab \hg}{\hg}, \nonumber \\
		& G = - \left( \FOinvtran \right) : \hnab \hv, \quad 
		\bH^1 = - \jump{\tk} \cdot \hng, \quad \bH^2 = - \ks \cdot \hngs, \nonumber \\
		& F_f^1 = \hdiv \tFf, \quad \bar{F}_s^1 = \hdiv \tFs, \label{Nonlinear} \\
		& F_s^g = - \beta \hcs \left( 1 + \frac{\gamma}{\hrs} \hcs \right) - \frac{n \hnab \hg}{\hg} \cdot \left( \hDs \inv{\hFs} \invtr{\hFs} \hnab \hcs \right), \nonumber \\
		& \bar{F}_f^2 = - \jump{\tF} \cdot \hng, \quad \bar{F}_s^2 = - \tFs \cdot \hng, \quad F^3 = -\tFs \cdot \hngs, \nonumber \\
		& F^4 = - \frac{\gamma \beta}{\hrs} \hcs \hcss, \quad
		F^5 = - \frac{\gamma \beta}{n \hrs} \hcs \left( \hg - 1 \right), \nonumber 
	\end{align}
with
\begin{align*}
	\kf & = - \hpif \left( \FfOinvtran \right) 
	+ \nuf \left( \inv{\hFf} \hnab \hvf + \tran{\hnab} \hvf \invtr{\hFf} \right) \left( \FfOinvtran \right) \\
	& \qquad + \nuf \left( \left(\FfOinv\right) \hnab \hvf + \tran{\hnab} \hvf  \left(\FfOinvtran\right)\right), \\
	\ks & = - \hpis \left(\FsOinvtran\right) + \mus \left( \frac{1}{\hg^2} \left( \FsO \right) + \left( \frac{1}{\hg^2} - 1 \right) \bbI - \left( \FsOinvtran \right) \right), \\
	\tF & = \hD \left( \inv{\hF} \invtr{\hF} - \bbI \right) \hnab \hc.
\end{align*}
Then we analyze system \NLFSI, which is exactly \NFSI.
\begin{remark}
	It follows from the Piola identity, which can be found in \cite[Page 39]{Ciarlet1988}, that
	\begin{equation*}
		\hdiv \left( \hJ \invtr{\hF} \right) = 0.
	\end{equation*}
	Then from \eqref{divTu},
	\begin{equation*}
		\hJ \invtr{\hF} : \hnab \hv = \hdiv \left( \hJ \inv{\hF} \hv \right).
	\end{equation*}
	Hence, $ G $ possesses the form
	\begin{equation}
		\label{G : form}
		\begin{aligned}
%		G = - \inv{\hJ} \hdiv \left( \hJ ( \FOinv ) \hv \right)
%			- n \frac{\hnab \hg}{\hg} \cdot \left( \inv{\hF} \hv \right),
		G_f & = - \hdiv \left( \left( \FfOinv \right) \hvf \right), \quad
		G_s = - \hdiv \left( \left( \FsOinv \right) \hvs \right)
		+ \hvs \cdot \hdiv \invtr{\hFs}.
		\end{aligned}
	\end{equation}
\end{remark}
\begin{remark}
	In generic, the system \eqref{Fluid1cf}--\eqref{initialv1c} for concentrations of monocytes and macrophages can be considered as a transmission problem in $ \Of $ and $ \Os $ with a common boundary $ \Gamma $. However, if we use the concentration and stress jump condition as boundary condition on $ \Gamma $, we will meet the regularity problem due to the high order term $ \Ds \hnab \hcs \cdot \hng $ in \eqref{Gamma1c}$ _2 $. More precisely, in our further perturbation argument, all perturbated or unrelated terms will be removed to right-hand side of the equation and the regularities of both sides should coincide with each other. The point is that in such argument, the right-hand side of \eqref{Gamma1c}$ _2 $ contains $ \Ds \hnab \hcs \cdot \hng $, which leads to a lower regularity, provided the same regularity of $ \hc $ on the both side. 
	
	Therefore, to avoid such awkward situation, we rewrite the transmission conditions as two Neumann type boundary conditions. Then the transmission problem can be decoupled into two separated parabolic system, which are both imposed with Neumann boundary and defined in $ \Of $ and $ \Os $ respectively. This is why we treat the boundary conditions on $ \Gamma $ as the form shown in \eqref{Gamma1c}. 
\end{remark}

Consequently, given data $ (\bK, G, \bH^1, \bH^2, F^1, F^2, F^3, F^4, F^5) $ with suitable regularities, existence and uniqueness of $ (\hv, \hpi, \hc, \hcss, \hg) $ in associated spaces will be obtained by the well-poesdness of linear systems in the next section. 

\section{Analysis of the linear systems}
\label{section 3}
As seen in \NLFSI, the linearized system can be seen as a two-phase type Stokes problem \eqref{Fluid1}--\eqref{initialv1}, two separated reaction-diffusion systems \eqref{Fluid1cf}--\eqref{initialv1c} and two ordinary differential equations \eqref{Solidcss}--\eqref{initialv1g} (equation for foam cells and growth, respectively). In this section, thanks to the maximal $ L^q $-regularity theory, we establish the existence for strong solutions to these systems with prescribed initial data and source terms in appropriate spaces.

\subsection{Two-phase Stokes problems with Neumann boundary condition}

Observing that $ \rv{(\bK, G, \bH^1, \bH^2)}_{t = 0} =0 $, one replaces $ (\bK, G, \bH^1, \bH^2) $ in \eqref{Fluid1}--\eqref{initialv1} by known functions $ (\bk, g, \bh^1, \bh^2) $ with $ \rv{(\bk, g, \bh^1, \bh^2)}_{t = 0} = 0 $ in \eqref{Fluid1s}. Then we get the problem addressed in this subsection.
\begin{equation} \label{twophase: linear}
	\begin{aligned}
		\hr \pt \hv - \hdiv \bS( \hv, \hpi ) & = \bk && \quad \text{in}\  \tO \times (0, T), \\
		\hdiv \hv & = g && \quad \text{in}\  \tO \times (0, T), \\
		\jump{\hv} & = 0 && \quad \text{on}\  \Gamma \times (0, T), \\
		\jump{\bS( \hv, \hpi )} \hng & = \bh^1 && \quad \text{on}\  \Gamma \times (0, T), \\
		\bS( \hvs, \hpis ) \hngs & = \bh^2 && \quad \text{on}\  \Gs \times (0, T), \\
		\rv{\hv}_{t = 0} & = \vo && \quad \text{in}\  \hO.
	\end{aligned}
\end{equation}
Now, we will prove the following theorem, namely, existence of unique solution to a two-phase Stokes problem with outer Neumann boundary condition.
\begin{theorem}
	\label{twophase: theorem}
	Let $ q > n + 2 $, $ T > 0 $, $ \hO $ a bounded domain with $ \Gs \in C^3$, $ \Gamma $ a closed hypersurface of class $ C^3 $. Assume that $ (\bk, g , \bh^1, \bh^2) $ are known functions contained in $ \ZT^v $ with initial value zero and $ \vo \in \Dqv $ with compatibility conditions
	\begin{gather*}
		\hdiv \vo = \rv{g}_{t = 0}, \quad 
		\rv{\jump{\vo}}_{\Gamma} = 0, \quad 
		\rv{\jump{\left( \hnu \left( \hnab \vo + \tran{\hnab} \vo \right) \hng\right)_{\tau}} }_{\Gamma} = 0, \\
		\rv{\left( \hnu \left( \hnab \vo + \tran{\hnab} \vo \right) \hngs \right)_{\tau}}_{\Gs} = 0.
	\end{gather*}
	Then the Stokes problem \eqref{twophase: linear} admits a unique strong solution $ (\hv, \hpi) $ in $ \YT^v $. Moreover, there exist a time $ T_0 > 0 $ and a constant $ C = C(T_0) > 0 $ such that for $ 0 < T \leq T_0 $,
	\begin{equation}
		\label{thm3.1:estimates}
		\norm{(\hv, \hpi)}_{\YT^v} \leq C \norm{(\bk, g , \bh^1, \bh^2, \vo)}_{\ZT^v \times \Dqv},
	\end{equation}
	where $ \ZT^v := \ZT^1 \times \ZT^2 \times \ZT^3 \times \ZT^4 $ with
	\begin{align}
		\label{ZT1}
		& \ZT^1 := \Lq{q}\left( 0,T; \Lq{q}(\tO) \right)^n, \\
		\label{ZT2}
		& \ZT^2 := \left\{
			\begin{aligned}
				& g \in \Lq{q}\left( 0,T; \W{1}(\tO) \right) \cap \W{1}\left( 0,T; \W{-1}(\hO) \right): \\
				& \qquad \qquad \qquad \qquad \tr_{\Gamma}(g) \in \W{1 - \frac{1}{q},\onehalf\left(1 - \frac{1}{q}\right)}(\Gamma \times (0,T)), \\
				& \qquad \qquad \qquad \qquad \tr_{\Gs}(g) \in \W{1 - \frac{1}{q},\onehalf\left(1 - \frac{1}{q}\right)}(\Gs \times (0,T))
			\end{aligned}
		\right\}, \\
		\label{ZT3}
		& \ZT^3 := \W{1 - \frac{1}{q},\onehalf\left(1 - \frac{1}{q}\right)}(\Gamma \times (0,T))^n, \quad \ZT^4 := \W{1 - \frac{1}{q},\onehalf\left(1 - \frac{1}{q}\right)}(\Gs \times (0,T))^n,
	\end{align}
	endowed with norms
	\begin{align*}
		& \norm{\bk}_{\ZT^1} = \norm{\bk}_{\Lq{q}\left( 0,T; \Lq{q}(\tO) \right)^n}, \\
		& \norm{g}_{\ZT^2} = \norm{g}_{\Lq{q}\left( 0,T; \W{1}(\tO) \right)} + \norm{g}_{\W{1}\left( 0,T; \W{-1}(\hO) \right)} \\
		& \qquad \qquad \quad + \norm{\tr_{\Gamma}(g)}_{\W{1 - \frac{1}{q},\onehalf\left(1 - \frac{1}{q}\right)}(\Gamma \times (0,T))}
		+ \norm{\tr_{\Gs}(g)}_{\W{1 - \frac{1}{q},\onehalf\left(1 - \frac{1}{q}\right)}(\Gs \times (0,T))}, \\
		& \norm{\bh^1}_{\ZT^3} = \norm{\bh}_{\W{1 - \frac{1}{q},\onehalf\left(1 - \frac{1}{q}\right)}(\Gamma \times (0,T))^n}, \quad
		\norm{\bh^2}_{\ZT^4} = \norm{\bh}_{\W{1 - \frac{1}{q},\onehalf\left(1 - \frac{1}{q}\right)}(\Gs \times (0,T))^n}. 
	\end{align*}
\end{theorem}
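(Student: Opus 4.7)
The proof strategy follows the reduction-plus-localization approach sketched in item iv) of the introduction, combining maximal $L^q$-regularity for the two-phase Stokes problem with Dirichlet outer boundary (Appendix~\ref{twophaseD}) with classical one-phase Neumann Stokes estimates (Appendix~\ref{results: linear}). Since $\vo \in \Dqv$ is nontrivial while $(\bk, g, \bh^1, \bh^2) \in \ZT^v$ have vanishing time-trace at $t=0$, the natural objective is to construct $(\hv, \hpi)$ by a sum of auxiliary problems and close \eqref{thm3.1:estimates} via a Neumann-series argument on a sufficiently short interval $[0,T_0]$.

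First I would reduce to the case $\bk \equiv 0$, $g \equiv 0$, $\vo \equiv 0$. The pair $(\bk, \vo)$ is absorbed by solving an auxiliary two-phase parabolic problem on $\hO$ with homogeneous transmission conditions and the given initial datum; the divergence datum $g$ is removed via a Bogovskii-type corrector $\hv_g$ on each of $\Of, \Os$, reproducing the prescribed traces of $g$ on $\Gamma$ and $\Gs$. The compatibility relations $\hdiv \vo = \rv{g}_{t=0}$ and $\rv{\jump{\vo}}_\Gamma = 0$ ensure that the correction stays inside $\YT^v$ and that the residual boundary data $\tilde{\bh}^1 \in \ZT^3$, $\tilde{\bh}^2 \in \ZT^4$ still vanish at $t=0$, with tangential parts compatible with the initial conditions.

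For this reduced problem I would pick a cutoff $\chi \in C^\infty(\hO)$ equal to $1$ on a neighbourhood of $\Gamma$ with $\operatorname{supp}\chi \Subset \hO$, so that $\chi$ vanishes near $\Gs$. Then $\chi \hv$ satisfies a two-phase Stokes system on $\hO$ with homogeneous Dirichlet boundary on $\Gs$, directly solvable by Appendix~\ref{twophaseD}, while the complementary piece $(1-\chi)\hv$ is supported away from $\Gamma$ and solves a one-phase Stokes problem in $\Os$ with the original Neumann data on $\Gs$, covered by classical maximal regularity. The commutator terms generated by $\chi$ (of the form $\hpi\nabla\chi$, $\hv\cdot\nabla\chi$, $\nabla\chi\cdot\nabla\hv$) are of lower differential order, so that for $T \leq T_0$ small they can be treated as a perturbation and absorbed by Neumann series. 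The uniform extension results of Appendix~\ref{appendix:extension} applied to $\ZT^v$ keep the constants in the intermediate estimates independent of $T \leq T_0$.

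The main obstacle is producing the pressure trace regularities $\jump{\hpi} \in \W{1-\frac{1}{q},\onehalf(1-\frac{1}{q})}(\Gamma\times(0,T))$ and $\rv{\hpi}_{\Gs} \in \W{1-\frac{1}{q},\onehalf(1-\frac{1}{q})}(\Gs\times(0,T))$ required by $\YT^2$: these are not consequences of the ambient bound $\hpi \in L^q(0,T; \W{1}(\hO))$ but must be read off from the normal components of the boundary conditions, e.g.\ on $\Gs$, $\hpi = -\bh^2\cdot\hngs + \hnus(\hnab\hvs + \tran{\hnab}\hvs)\hngs\cdot\hngs$, whose right-hand side inherits the required regularity from $\hnab\hv$ via boundary trace embeddings for anisotropic Sobolev-Slobodeckij spaces applied to $\YT^1$. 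A related subtle point is that the commutator term $\hpi\nabla\chi$ has to be absorbed using the additional boundary trace regularity of $\hpi$ (together with an elliptic estimate interior to each phase), rather than only its ambient $L^q(\W{1})$-control; this is where the detailed structure of $\YT^2$ truly enters.
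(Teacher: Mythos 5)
Your overall architecture is the same as the paper's (absorb $(\bk,\vo)$ by a parabolic transmission problem, remove the divergence datum, then localize with a cutoff into the two-phase Dirichlet problem of Appendix~\ref{twophaseD} plus a one-phase Neumann Stokes problem in $\Os$, and close by a Neumann series on a short interval). However, two of your steps have genuine gaps. First, the reduction of $g$ by a Bogovskii corrector ``on each of $\Of,\Os$'' does not work as stated: a Bogovskii field has zero trace on the subdomain boundary, so $\hdiv \hv_g = \tilde g$ forces $\int_{\Of}\tilde g\,\d X=\int_{\Os}\tilde g\,\d X=0$, which is not available; moreover you need the corrector in $\YT^1$, i.e. $\pt \hv_g \in \Lq{q}(0,T;\Lq{q})$, while the hypothesis only gives $\pt g \in \Lq{q}(0,T;\W{-1}(\hO))$, a space that neither localizes to the subdomains nor is mapped into $\Lq{q}$ by the standard Bogovskii operator. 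The paper instead corrects by $\hnab\phi$ with $\phi$ solving the elliptic transmission problem \eqref{elliptic: phi in reduction}; this avoids the mean-value obstruction (the flux exits through $\Gamma$, $\Gs$), handles $\pt g\in\W{-1}$ by elliptic regularity, and — crucially — shifts the pressure by $-\hr\pt\phi+\hnu\hDelta\phi$, which is exactly where the assumed trace regularities $\tr_\Gamma(g),\tr_{\Gs}(g)$ enter to keep $\hpi$ in $\YT^2$. Your velocity-only corrector has no mechanism for this and your appeal to ``reproducing the prescribed traces of $g$'' is not substantiated.

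Second, the assertion that the cutoff commutators are ``of lower differential order'' and hence absorbable for small $T$ is precisely the point that needs proof, and it fails as stated. The divergence commutator $\hnab\chi\cdot\hv$ must be estimated in $\ZT^2$, whose norm contains $\W{1}(0,T;\W{-1}(\hO))$ and the boundary trace norms; its time derivative involves $\pt\hv\in\Lq{q}(\Lq{q})$, which carries no factor $T^\delta$. Likewise the pressure commutator $\hpi\hnab\chi$ is only $\Lq{q}(\Lq{q})$ in time, again with no smallness; boundary trace regularity of $\hpi$ (your suggested fix) does not produce a $T^\delta$ factor either. The paper overcomes this by an additional potential shift: it solves \eqref{elliptic: phi bar} and \eqref{elliptic: phi} for $\bphi,\phi$, replaces $(\tv,\tpi)$ by $(\bv^\sharp,\pi^\sharp)=(\tv-\hnab\phi,\ \tpi+\hr\pt\phi-\bphi-2\hnu\hDelta\phi)$ so that the divergence error disappears, and then extracts smallness from the \emph{extra fractional time regularity with vanishing trace at $t=0$} of the auxiliary pressures ($\hpi^i\in\WO{\alpha}(0,T;\Lq{q})$, via Propositions 8.2.1 and 7.3.5 of Pr\"uss--Simonett) together with the embeddings of Lemmas \ref{embedding: Ws Wr} and \ref{embedding: W alpha}; only then does the Neumann series close, and the result is extended to arbitrary $T$ by iteration in time. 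Without this (or an equivalent) quantitative smallness mechanism, your perturbation step does not go through.
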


\subsubsection{Reductions}
To simplify the proof of Theorem \ref{twophase: theorem}, we reduce \eqref{twophase: linear} to the case $ (\bk, g , \vo) = 0 $. 

First of all, we define $ \barv $ as the solution of the parabolic transmission problem
\begin{equation}\label{parabolic: transmission}
	\begin{aligned}
		\hrf \pt \barv - \hdiv \bS(\barv, 0) & = \bk  && \quad \text{in}\  \tO \times (0, T), \\
		\jump{\hv} & = 0 && \quad \text{on}\  \Gamma \times (0, T), \\
		\jump{\bS(\barv, 0)} \hng & = 0 && \quad \text{on}\  \Gamma \times (0, T), \\
		\bS(\barvs, 0) \hngs & = 0 && \quad \text{on}\  \Gs \times (0, T), \\
		\rv{\barv}_{t = 0} & = \vo  && \quad \text{in}\  \tO,
	\end{aligned}
\end{equation}
with $ \bk \in \Lq{q}(\tO \times (0,T)) $ and $ \vo \in \Dqv $. Since the Lopatinskii-Shapiro conditions are satisfied, \eqref{parabolic: transmission} is uniquely solvable in $ \W{2,1}(\hO \times (0,T)) $, thanks to \cite[Theorem 6.5.1]{PS2016}.

Now, we are in the position to reduce $ g $. To this end, we introduce a elliptic transmission problem with Dirichlet boundary
\begin{equation}
	\label{elliptic: phi in reduction}
	\begin{aligned}
		\hDelta \phi & = g - \hdiv \barv =: \tilde{g} && \quad \text{in}\ \tO, \\
		\jump{\hr \phi} & = 0 && \quad 
		\text{on}\ \Gamma, \\
		\jump{\hnab \phi} \cdot \hng & = 0 && \quad 
		\text{on}\ \Gamma, \\
		\hrs \phi_s & = 0 && \quad \text{on}\ \Gs,
	\end{aligned}
\end{equation}
with $ \tilde{g} \in \Lq{q}(\tO) $. Then \eqref{elliptic: phi in reduction} is uniquely solvable by Proposition \ref{transmission laplace: strong proposition}. In addition, with the regularity of $ g $ and $ \bv $, the solution satisfies $ \hnab \phi \in \YT^1 $. Employing the decomposition
\begin{equation}
	\label{decomposition}
	(\hv, \hpi) = (\barv + \hnab \phi + \tv, - \hr \pt \phi + \hnu \hDelta \phi + \tpi),
\end{equation}
we know that $ (\tv, \tpi) $ solves system \eqref{twophase: linear} with $ (\bk, g , \vo) = 0 $ and modified nonvanishing data $ (\bh^1, \bh^2) $  (not to be relabeled) in the right regularity classes having a vanishing trace at $ t = 0 $. Thus, we will focus on the reduced system in the case $ (\bk, g , \vo) = 0 $.

\begin{remark}
	From the decomposition \eqref{decomposition}, regularity of $ \hpi $ given in $ \YT^2 $ indicates that $ \pt \phi $ and $ \hDelta \phi $ must be contained in $ \YT^2 $. Since $ \hnab \phi \in \YT^1 = \Lq{q}(0,T; \W{2}(\tO) \cap \W{1}(\hO))^n \cap \W{1}(0,T; \Lq{q}(\hO))^n $, it is clear that $ \pt \phi, \hDelta \phi  \in \Lq{q}(0,T; \W{1}(\tO)) $. Moreover:
	\begin{itemize}
		\item[i)] Vanishing Dirichlet boundary conditions of $ \phi $ on $ \Gamma $ and $ \Gs $ lead to $ \rv{\jump{\pt \phi}}_{\Gamma} = \rv{\pt \phi}_{\Gs} = 0 $, which naturally satisfy the boundary regularity $ \W{1 - 1/q, (1 - 1/q)/2}(\Gamma \times (0,T)) $ and $ \W{1 - 1/q, (1 - 1/q)/2}(\Gs \times (0,T)) $. Then $ \pt \phi \in \YT^2 $. 
		\item[ii)] For $ \hDelta \phi = \tilde{g} = g - \hdiv \barv $, the boundary regularity for $ \hdiv \barv $ is not a problem due to the zero Neumann boundary of $ \barv $. Thus, to ensure the validation of the regularity for $ \hpi $, we add trace regularities on $ \Gamma $ and $ \Gs $ for $ g $ in $ \ZT^2 $. Namely,
		\begin{equation*}
			\tr_{\Gamma}(g) \in \W{1 - 1/q, (1 - 1/q)/2}(\Gamma \times (0,T)), \quad 
			\tr_{\Gs}(g) \in \W{1 - 1/q, (1 - 1/q)/2}(\Gs \times (0,T)).
		\end{equation*}
		Consequently, $ \hDelta \phi \in \YT^2 $.
	\end{itemize}
\end{remark}

\subsubsection{Proof of Theorem \ref{twophase: theorem}}
As stated in the last section, we analyze the reduced system of \eqref{twophase: linear} with $ (\bk, g , \vo) = 0 $. Due to the outer Neumann boundary condition, the proof is proceeded by a truncation (localization) argument, based on the results given in Appendix \ref{results: linear}. More precisely, with suitable cutoff function, we decompose the system into a two-phase Stokes problem with Dirichlet boundary conditions and a one-phase nonstationary Stokes problem, which are uniquely solvable as in Section \ref{twophaseD} and Abels \cite[Theorem 1.1]{Abels2010} respectively.
\begin{proof}[{Proof of Theorem \ref{twophase: theorem}}]
	\textbf{\textit{Step 1.}} The first step is finding $ (\hvone, \hpione) $ to solve
	\begin{equation}\label{twophase: g}
		\begin{aligned}
			\hr \pt \hvone - \hdiv \bS( \hvone, \hpione ) & = 0 && \quad \text{in}\  \tO \times (0, T), \\
			\hdiv \hvone & = 0 && \quad \text{in}\  \tO \times (0, T), \\
			\jump{\hvone} & = 0 && \quad \text{on}\  \Gamma \times (0, T), \\
			\jump{\bS( \hvone, \hpione )} \hng & = \bh^1 && \quad \text{on}\  \Gamma \times (0, T), \\
			\hvone & = 0 && \quad \text{on}\  \Gs \times (0, T), \\
			\rv{\hvone}_{t = 0} & = 0 && \quad \text{in}\  \hO,
		\end{aligned}
	\end{equation}
	where $ \bh^1 \in \ZT^3 $ with $ \rv{\bh^1}_{t = 0} = 0 $. Since $ \rv{\hvone}_{t = 0} = 0 $, compatibility conditions \eqref{twophaseD: compatibility} holds and then \eqref{twophase: g} admits a unique solution $ (\hvone, \hpi) $ in $ \YT^v $, thanks to Proposition \ref{twophase: Dirichlet boundary}. In addition, we have the estimate
	\begin{equation}\label{hvone}
		\norm{(\hvone, \hpione)}_{\YT^v}
		\leq C \norm{\bh^1}_{\ZT^3},
	\end{equation}
	for some $ C > 0 $ independent of $ \hvone, \hpione, \bh^1 $.
	\\\textbf{\textit{Step 2.}} Now, we construct $ (\hvstwo, \hpistwo) $ to solve the Stokes problem with Neumann boundary condition, which reads
	\begin{equation}\label{stokes: g}
		\begin{aligned}
			\hrs \pt \hvstwo - \hdiv \bS( \hvstwo, \hpistwo ) & = 0 && \quad \text{in}\  \Os \times (0, T), \\
			\hdiv \hvstwo & = 0 && \quad \text{in}\  \Os \times (0, T), \\
			\bS( \hvstwo, \hpistwo ) \hng & = 0 && \quad \text{on}\  \Gamma \times (0, T), \\
			\bS( \hvstwo, \hpistwo ) \hngs & = \bh^2 && \quad \text{on}\  \Gs \times (0, T), \\
			\rv{\hvtwo}_{t = 0} & = 0 && \quad \text{in}\  \Os,
		\end{aligned}
	\end{equation} 
	where $ \bh^2 \in \ZT^4 $ with $ \rv{\bh^2}_{t = 0} = 0 $. 
	Thanks to Theorem 1.1 in Abels \cite{Abels2010} with $ \Gamma_1 = \emptyset $, \eqref{stokes: g} admits a unique solution $ (\hvtwo, \hpitwo) $ in $ \W{2,1}(\tO) \times \Lq{q}(0,T; \WOO{1}(\tO)) $. Due to $ \rv{\hvstwo}_{t = 0} = 0 $, all the compatibility conditions are satisfied. Moreover,
	\begin{equation}\label{hvtwo}
		\norm{(\hvtwo, \hpitwo)}_{\W{2,1}(\tO) \times \Lq{q}(0,T; \WOO{1}(\tO))}
		\leq C \norm{\bh^2}_{\ZT^4},
	\end{equation} 
	for some $ C > 0 $ independent of $ \hvtwo, \hpitwo, \bh^2 $.
	\\\textbf{\textit{Step 3.}} Finally, we combine the regularity results above by truncation. Specifically, let $ \psi(x) \in C_0^\infty (\hO) $ be a cutoff function over $ \hO $ such that 
	\begin{equation}
		\label{cutoff function}
		\psi(x) = 
		\left\{
			\begin{aligned}
				& 1, \quad \text{in a neighborhood of}\  \Of,\\
				& 0, \quad \text{in a neighborhood of}\  \Gs.
			\end{aligned}
		\right.
	\end{equation}
	We define 
	\begin{align*}
		\tv := \psi \hvone + (1 - \psi) \hvtwo, \quad
		\tpi := \psi \hpione + (1 - \psi) \hpitwo.
	\end{align*}
	Then $ (\tv, \tpi) \in \YT^v $ solves 
	\begin{equation}\label{twophase: R1R2}
		\begin{aligned}
			\hr \pt \tv - \hdiv \bS( \tv, \tpi ) & = \bR^1 && \quad \text{in}\  \tO \times (0, T), \\
			\hdiv \tv & = R^2 && \quad \text{in}\  \tO \times (0, T), \\
			\jump{\tv} & = 0 && \quad \text{on}\  \Gamma \times (0, T), \\
			\jump{\bS( \tv, \tpi ) } \hng & = \bh^1 && \quad \text{on}\  \Gamma \times (0, T), \\
			\bS( \tvs, \tpis ) \hngs & = \bh^2 && \quad \text{on}\  \Gs \times (0, T), \\
			\rv{\tv}_{t = 0} & = 0 && \quad \text{in}\  \hO,
		\end{aligned}
	\end{equation}
	where $ \bR^1 $ and $ R^2 $ vanish in $ \Of $, while in $ \Os $,
	\begin{align*}
		\bR^1 
			& = - \bS(\hvsone - \hvstwo, \hpisone - \hpistwo) \hnab \psi \\
			& \quad \qquad - 2 \hnus \left( \hDelta \psi \left( \hvsone - \hvstwo \right) + \left( \hnab \hvsone - \hnab \hvstwo \right) \hnab \psi + \hnab^2 \psi \left( \hvsone - \hvstwo \right) \right), \\
		R^2 & = \hnab \psi \cdot \left( \hvsone - \hvstwo \right).
	\end{align*}
	Since the embedding
	\begin{equation*}
		\WO{2,1}\left( \Os \times (0,T) \right) \hookrightarrow \WO{\onehalf} \left( 0,T; \W{1}(\Os) \right)
	\end{equation*}
	holds, we know $ \hv^i \in \WO{\onehalf} ( 0,T; \W{1}(\Os) ) $, $ i = 1,2 $. For the reduced system, Proposition 8.2.1 and 7.3.5 in Pr\"{u}ss and Simonett \cite{PS2016} imply that $ \hpione $ and $ \hpistwo $ enjoys extra time regularities $ \hpione \in \WO{\alpha}(0,T; \Lq{q}(\hO))  $ and $ \hpistwo \in \WO{\alpha}(0,T; \Lq{q}(\Os)) $ respectively for $ 0 < \alpha < \onehalf ( 1 - \frac{1}{q} ) $. Hence
	\begin{equation*}
		\bR^1 \in \WO{\alpha} \left( 0,T; \Lq{q}(\Os) \right) \cap \Lq{q} \left( 0,T; \W{1}(\Os) \right),
	\end{equation*}
	for some fixed $ 0 < \alpha < \onehalf ( 1 - \frac{1}{q} ) $.
	
	To complete the proof, we still need to prove that the right-hand side terms of \eqref{twophase: R1R2} can be in fact substituted by the right-hand side terms of \eqref{twophase: linear} in appropriate spaces. 
	Since the regularity of $ \hvs^i $ and $ \hpis^i $, $ i = 1,2 $, are not enough to control $ \bR^1 $ and $ R^2 $ for small times, we are going to remove the inhomogeneities $ \bR^1 $ and  $ R^2 $. For $ \bR^1 $, we construct a $ \bphi $ solving the problem
	\begin{equation}
		\label{elliptic: phi bar}
		\begin{aligned}
			\bphi_f & = 0 && \quad \text{in}\ \Of, \\
%			\hDelta \bphi_s & = \hdiv \left( (\hpisone - \hpistwo) \hnab \psi \right), && \quad \text{in}\ \Os,\\
			\hDelta \bphi_s & = \hdiv \bR^1 && \quad \text{in}\ \Os,\\
			\bphi_s & = 0 && \quad \text{on}\ \Gamma, \\
			\bphi_s & = 0 && \quad \text{on}\ \Gs.
		\end{aligned}
	\end{equation}
	Then we obtain $ \rv{\hnab \bphi}_{t = 0} = \rv{\bR^1}_{t = 0} = 0 $. By elliptic theory and regularity of $ \bR^1 $, \eqref{elliptic: phi bar} admits a unique solution $ \bphi $ satisfying $ \WO{\alpha} ( 0,T; \W{1}(\Os) ) \cap \Lq{q} ( 0,T; \W{2}(\Os) ) $.
	For $ R^2 $, we find a $ \phi $ solving the elliptic transmission problem
	\begin{equation}
		\label{elliptic: phi}
		\begin{aligned}
			\hDelta \phi_f & = 0 && \quad \text{in}\ \Of, \\
			\hDelta \phi_s & = R^2 && \quad \text{in}\ \Os, \\
			\jump{\hr \phi} & = 0 && \quad 
			\text{on}\ \Gamma, \\
			\jump{\hnab \phi} \cdot \hng & = 0 && \quad 
			\text{on}\ \Gamma, \\
			\hrs \phi_s & = 0 && \quad \text{on}\ \Gs.
		\end{aligned}
	\end{equation}
	Then we have $ \rv{\phi}_{t = 0} = 0 $.
%	\begin{equation}
%		\label{elliptic: phif}
%		\begin{aligned}
%			\hDelta \phi_f & = 0, && \quad \text{in}\ \Of, \\
%			\hnab \phi_f \cdot \hng & = a, && \quad 
%			\text{in}\ \Gamma,
%		\end{aligned}
%	\end{equation}
%	and
%	\begin{equation}
%		\label{elliptic: phis}
%		\begin{aligned}
%			\hDelta \phi_s = \hnab \psi \cdot \left( \hvsone - \hvstwo \right) & = \Div \left( \psi \left( \hvsone - \hvstwo \right) \right), && \quad \text{in}\ \Os, \\
%			\hnab \phi_s \cdot \hng & = \hnab \phi_f \cdot \hng, && \quad 
%			\text{in}\ \Gamma, \\
%			\rv{\phi_s}_{\Gs} & = 0, && \quad \text{on}\ \Gs,
%		\end{aligned}
%	\end{equation}
	Since $ \hvsone - \hvstwo \in \WO{2,1}(\Os \times (0,T))^n $, $ R^2 \in \WO{2,1}(\Os \times (0,T)) \hookrightarrow \WO{\onehalf}(0,T; \W{1}(\Os)) $. Together with Proposition \ref{transmission laplace: strong proposition}, one concludes that \eqref{elliptic: phi} admits a solution such that $ \hnab \phi $ is unique, with regularity
	\begin{equation*}
		\hnab \phi \in E_0 := \WO{1}( 0,T; \W{1}(\tO) )^n
		\cap \WO{\frac{1}{4}}( 0,T; \W{2}(\tO) )^n.
	\end{equation*}
	For its traces on $ \Gamma $ and $ \Gs $, we have
	\begin{gather*}
		\jump{\hnab \phi} \in E_1 := \WO{1}( 0,T; \W{1 - \frac{1}{q}}(\Gamma) )^n
		\cap \WO{\frac{1}{4}}( 0,T; \W{2 - \frac{1}{q}}(\Gamma) )^n, \\
		\hnab \phi_s \in E_1^s := \WO{1}( 0,T; \W{1 - \frac{1}{q}}(\Gs) )^n
		\cap \WO{\frac{1}{4}}( 0,T; \W{2 - \frac{1}{q}}(\Gs) )^n.
	\end{gather*}
	Besides, $ \hnab^2 \phi \in  $
	\begin{gather*}
		\jump{\hnu \hnab^2 \phi} \in E_2 := \WO{1 - \frac{1}{2q}}( 0,T; \Lq{q}(\Gamma) )^{n \times n}
		\cap \WO{\frac{1}{4}}( 0,T; \W{1 - \frac{1}{q}}(\Gamma) )^{n \times n}, \\
		\hnus \hnab^2 \phi_s \in E_2^s := \WO{1 - \frac{1}{2q}}( 0,T; \Lq{q}(\Gs) )^{n \times n}
		\cap \WO{\frac{1}{4}}( 0,T; \W{1 - \frac{1}{q}}(\Gs) )^{n \times n}.
	\end{gather*}
	Moreover, the estimate holds for a constant $ C $, independent of $ 0 < T < T_0 $,
	\begin{align*}
		& \norm{\hnab \phi}_{E_0}
		+ \norm{\jump{\hnab \phi}}_{E_1}
		+ \norm{\hnab \phi_s}_{E_1^s} \\
		& \qquad \qquad + \norm{\jump{\hnu \hnab^2 \phi}}_{E_2}
		+ \norm{\hnus \hnab^2 \phi}_{E_2^s} 
		\leq C \norm{\hvsone - \hvstwo}_{\W{2,1}(\Os \times (0,T))^n}.
	\end{align*}
	
	Finally, define 
	\begin{align*}
		\bv^\sharp := \tv - \hnab \phi, \quad 
		\pi^\sharp := \tpi + \hr \pt \phi - \bar{\phi} - 2 \hnu \hDelta \phi.
	\end{align*}
	Since $ \rv{\jump{\hr \phi}}_{\Gamma}, \rv{\hrs \phi}_{\Gs} = 0 $, $ \rv{\jump{\hr \pt \phi}}_{\Gamma}, \rv{\hrs \pt \phi}_{\Gs} = 0 $. Then $ (\bv^\sharp, \pi^\sharp) $ solves
	\begin{equation}\label{twophase: NewR1h1h2}
		\begin{aligned}
			\hr \pt \bv^\sharp - \hdiv \bS( \bv^\sharp, \pi^\sharp ) & = {\bR}^1 - \hnab \bphi =: \bR^0 && \quad \text{in}\  \tO \times (0, T), \\
                        \hdiv \bv^\sharp & =0 && \quad \text{in} \ \tO \times (0,T),\\
			\jump{\bv^\sharp} & = \bR' && \quad \text{on}\  \Gamma \times (0, T), \\
			\jump{\bS( \bv^\sharp, \pi^\sharp ) } \hng & = \bh^1 + \bR^3 && \quad \text{on}\  \Gamma \times (0, T), \\
			\bS( \vs^\sharp, \pis^\sharp ) \hngs & = \bh^2 + \bR^4 && \quad \text{on}\  \Gs \times (0, T), \\
			\rv{\bv^\sharp}_{t = 0} & = 0 && \quad \text{in}\  \hO,
		\end{aligned}
	\end{equation}
	where
	\begin{gather*}
		 \hdiv \bR^0 = 0, \quad 
		\bR' = - \jump{\hnab \phi}\\
		 \bR^3 = \jump{2 \hnu \hnab^2 \phi} \hng -  \jump{2 \hnus \hDelta \phi} \hng  \quad
		\bR^4 = 2 \hnus \hnab^2 \phi_s \hngs - 2 \hnus \hDelta \phi_s \hngs.
	\end{gather*}
%	
%	By Lemma \ref{embedding}, we have
%	\begin{equation*}
%		\W{2,1}( \tO \times (0,T) ) \hookrightarrow C\left( [0,T]; \W{1}(\tO) \right),
%	\end{equation*}
%	with 
%	\begin{equation*}
%		\norm{\hvsone - \hvstwo}_{C\left( [0,T]; \W{1}(\tO) \right)} 
%		\leq C \norm{\hvsone - \hvstwo}_{\W{2,1}( \tO \times (0,T) )},
%	\end{equation*}
	$ \bR^0 $ can be seen as a Helmholtz projection of $ \bR^1 $ and
	\begin{equation*}
		\bR^0 \in \WO{\alpha} ( 0,T; \Lq{q}(\Os) )^n
		\cap \Lq{q} ( 0,T; \W{2 \alpha}(\Os) )^n, \quad \text{for all}\ 0 < \alpha < \onehalf - \frac{1}{2q}.
	\end{equation*}
	By Lemma \ref{embedding: W alpha},
	\begin{equation*}
		\bR^0 \in C ( [0,T]; \W{2\alpha - \frac{2}{q}}(\Os) )^n \hookrightarrow C ( [0,T]; \Lq{q}(\Os) )^n
	\end{equation*}
	holds for $ \frac{1}{q} < \alpha < \onehalf - \frac{1}{2q} $.
	Hence, for $ \rv{\bR^0}_{t = 0} = \rv{( \hnab \bphi - \bR^1 )}_{t = 0} = 0 $,
	\begin{align*}
		\norm{\bR^0}_{\ZT^1} & 
	 		\leq C T^{\frac{1}{q}} \norm{\bR^0}_{C ( [0,T]; \Lq{q}(\Os) )^n} 
	 		\leq C T^{\frac{1}{q}} \norm{\bR^0}_{\WO{\alpha} ( 0,T; \Lq{q}(\Os) )^n
				\cap \Lq{q} ( 0,T; \W{2 \alpha}(\Os) )^n} \\
			& \qquad \qquad \leq C T^{\frac{1}{q}} \left( \max_{i = 1,2} \norm{(\hv^i, \hpi^i)}_{\YT^v} \right) 
			\leq C T^{\frac{1}{q}} \norm{(\bk, g, \bh^1, \bh^2, \vo)}_{\ZT^v \times \Dqv},
	\end{align*}
	for $ 0 < T < T_0 $. According to Appendix \ref{results: linear}, the regularity space of $ \bR' $ is defined as $ \ZT' := \W{2 - \frac{1}{q}, 1 - \frac{1}{2q}} (\Gamma \times (0,T)) $. Then with Lemma \ref{embedding: Ws Wr} and $ \W{s}(0,T; X) \hookrightarrow C([0,T]; X) $ for $ sq > 1 $,
	\begin{align*}
		\norm{\bR'}_{\ZT'} 
			& \leq C \left( 
			\norm{\jump{\hnab \phi}}_{\Lq{q}( 0,T; \W{2 - \frac{1}{q}}(\Gamma) )^n} \right. \\
			& \left. \qquad \qquad 
			+ \norm{\jump{\hnab \phi}}_{\Lq{q}( 0,T; \Lq{q}(\Gamma) )^n} 
			+ \seminorm{\jump{\hnab \phi}}_{\W{ 1 - \frac{1}{2q} } ( 0,T; \Lq{q}(\Gamma) )^n } 
			\right) \\
			& \quad \leq C T^\frac{1}{q} 
			\norm{\jump{\hnab \phi}}_{\WO{\frac{1}{4}}( 0,T; \W{2 - \frac{1}{q}}(\Gamma) )^n} 
			+ C T^\frac{1}{2q} \seminorm{\jump{\hnab \phi}}_{\WO{1} ( 0,T; \W{1 - \frac{1}{q}}(\Gamma) )^n }  \\
			& \quad \leq C T^\frac{1}{2q} \left( \max_{i = 1,2} \norm{(\hv^i, \hpi^i)}_{\YT^v} \right) 
			\leq C T^{\frac{1}{2q}} \norm{(\bk, g, \bh^1, \bh^2, \vo)}_{\ZT^v \times \Dqv}. 
	\end{align*}
	Since $ \Gamma $ and $ \Gs $ are of class $ C^3 $, $ \hng $ and $ \hngs $ are contained in $ C^2 $. Then we obtain
	\begin{align*}
		\norm{\bR^3}_{\ZT^3} 
			& \leq C \left( 
				\norm{\jump{\hnab^2 \phi}}_{\Lq{q}( 0,T; \W{1 - \frac{1}{q}}(\Gamma) )^{n \times n}} \right. \\
				& \left. \qquad \qquad + \norm{\jump{\hnab^2 \phi}}_{\Lq{q}( 0,T; \Lq{q}(\Gamma) )^{n \times n}} 
				+ \seminorm{\jump{\hnab^2 \phi}}_{\W{\onehalf \left( 1 - \frac{1}{q} \right)} ( 0,T; \Lq{q}(\Gamma) )^{n \times n} } \right) \\
%			& \qquad + C \left( 
%				\norm{\jump{\hDelta \phi}}_{\Lq{q}( 0,T; \W{1 - \frac{1}{q}}(\Gamma) )} 
%				+ \norm{\jump{\hDelta \phi}}_{\W{\onehalf \left( 1 - \frac{1}{q} \right)} ( 0,T; \Lq{q}(\Gamma) ) } \right) \\
			& \leq C T^\frac{1}{q}  
				\norm{\jump{\hnab^2 \phi}}_{\WO{\frac{1}{4}}( 0,T; \W{1 - \frac{1}{q}}(\Gamma) )^{n \times n}} 
				+ C T^{\onehalf} \seminorm{\jump{\hnab^2 \phi}}_{\WO{1 - \frac{1}{2q}} ( 0,T; \Lq{q}(\Gamma) )^{n \times n} }  \\
%			& \qquad + C \TQ 
%				\norm{\hvsone - \hvstwo}_{C ( [0,T]; \W{1}(\Os )^n} 
%				+ C T^{\frac{1}{2q}} \norm{\hvsone - \hvstwo}_{\W{2,1} ( 0,T; \Lq{q}(\Os) )^n} \\
			& \leq C T^{\frac{1}{q}} \left( \max_{i = 1,2} \norm{(\hv^i, \hpi^i)}_{\YT^v} \right) 
			\leq C T^{\frac{1}{q}} \norm{(\bk, g, \bh^1, \bh^2, \vo)}_{\ZT^v \times \Dqv},
	\end{align*}
	with the help of Lemma \ref{embedding: Ws Wr}. Similarly,
	\begin{align*}
		\norm{\bR^4}_{\ZT^4} \leq C T^{\frac{1}{q}} \left( \max_{i = 1,2} \norm{(\hv^i, \hpi^i)}_{\YT^v} \right) 
		\leq C T^{\frac{1}{q}} \norm{(\bk, g, \bh^1, \bh^2, \vo)}_{\ZT^v \times \Dqv}.
	\end{align*}
	Taking $ T_0 $ sufficiently small such that $ C T_0^{\frac{1}{2q}} \leq \onehalf $, we have 
	\begin{equation*}
		\norm{\bR^0(y)}_{\ZT^1} + \norm{\bR'(y)}_{\ZT'} + \norm{\bR^3(y)}_{\ZT^3} + \norm{\bR^4(y)}_{\ZT^4}
		\leq \onehalf \norm{y}_{\ZT^v \times \Dqv},
	\end{equation*}
	for $ y = (\bk, g, \bh^1, \bh^2, \vo)^\top $. By a Neumann series argument, 
	\begin{equation*}
		\Phi : \tilde{y} \mapsto \tilde{y} + (\bR^0, 0, \bR', \bR^3, \bR^4, 0)^\top(\tilde{y})
	\end{equation*}
	is invertible for $ \tilde{y} = (\bk, g, 0, \bh^1, \bh^2, \vo)^\top $. Consequently, replacing $ \tilde{y} $ by $ \Phi^{-1}(\tilde{y}) $ in \eqref{twophase: R1R2} yields the solvability of \eqref{twophase: NewR1h1h2} for $ 0 < T < T_0 \leq 1/(2C)^{2q} $. 
	Solving \eqref{twophase: linear} iteratively on $ [0, T_0] $, $ [T_0, 2 T_0] $, $ \dots $, with initial values $ \vo $, $ \rv{\vo}_{t = T_0} $, $ \dots $, one obtains the solvability for any $ T_0 > 0 $.
	Additionally, estimate \eqref{thm3.1:estimates} is a result of \eqref{hvone} and \eqref{hvtwo}.
	This completes the proof.
\end{proof}

\begin{remark}
	Since $ \hc $ is contained in $ \YT^3 = \Lq{q}(0,T; \W{2}(\tO) ) \cap \W{1}(0,T; \Lq{q}(\hO)) $, which will be given in Section \ref{parabolic: neumann}, one can easily verify that 
	$ \hc \in \ZT^2 $. Hence, we replace $ g $ in \eqref{twophase: linear} by $ g + \frac{\gamma \beta}{\hrs} \hcs $ with the same existence and regularity results to the original linear system. To be more precise, we find $ (\barv, \barpi) \in \YT^v $ to solve
	\begin{equation*}
		\begin{aligned}
			\hr \pt \barv - \hdiv \bS( \barv, \barpi ) & = 0 && \quad \text{in}\  \tO \times (0, T), \\
			\hdiv \barv & = \frac{\gamma \beta}{\hrs} \hcs && \quad \text{in}\  \tO \times (0, T), \\
			\jump{\barv} & = 0 && \quad \text{on}\  \Gamma \times (0, T), \\
			\jump{\bS( \barv, \barpi )} \cdot \hng & = 0 && \quad \text{on}\  \Gamma \times (0, T), \\
			\bS( \barvs, \barpis ) \cdot \hngs & = 0 && \quad \text{on}\  \Gs \times (0, T), \\
			\rv{\barv}_{t = 0} & = 0 && \quad \text{in}\  \hO,
		\end{aligned}
	\end{equation*}
	with $ \hc \in \ZT^2 $, thanks to Theorem \ref{twophase: theorem}. Then $ (\hv + \barv, \hpi + \barpi) $ solves the original linear system.
\end{remark}

\subsection{Heat equations with Neumann boundary condition} \label{parabolic: neumann}
From \eqref{Fluid1cf}--\eqref{initialv1c}, we have two decoupled systems with given functions $ (f^1, f^2, f^3) $, that is,
\begin{equation}\label{parabolic: cf}
	\left\{
		\begin{aligned}
			\pt \hcf - \hDf \hDelta \hcf & = f_f^1 \quad && \text{in}\  \Of \times (0, T), \\
			\hDf \hnab \hcf \cdot \hng & = f_f^2 \quad && \text{on}\  \Gamma \times (0, T), \\
			\rv{\hcf}_{t = 0} & = \co_f \quad && \text{in}\  \Of,
		\end{aligned}
	\right.
\end{equation}
and
\begin{equation}\label{parabolic: cs}
	\left\{
		\begin{aligned}
			\pt \hcs - \hDs \hDelta \hcs & = f_s^1 \quad && \text{in}\  \Os \times (0, T), \\
			\hDs \hnab \hcs \cdot \hng & = f_s^2 \quad && \text{on}\  \Gamma \times (0, T), \\
			\hDs \hnab \hcs \cdot \hng & = f^3 \quad && \text{on}\  \Gs \times (0, T), \\
			\rv{\hcs}_{t = 0} & = \co_s \quad && \text{in}\  \Os,
		\end{aligned}
	\right.
\end{equation}
According to the maximal $ L^q $-regularity results we introduced in Appendix \ref{parabolic}, we immediately have following theorem.
\begin{theorem}
	\label{parabolic: theorem}
	Let $ q > n + 2 $, $ \hO $ a bounded domain with $ \Gs \in C^3$, $ \Gamma $ a closed hypersurface of class $ C^3 $. Assume that $ (f^1, f^2 , f^3) $ are known functions contained in $ \ZT^c $ and $ \co \in \Dqc $ with compatibility conditions
	\begin{equation*}
		\rv{\hDf \hnab \co_f \cdot \hng}_\Gamma = \rv{f_f^2}_{t = 0}, \quad 
		\rv{\hDs \hnab \co_s \cdot \hng}_\Gamma = \rv{f_s^2}_{t = 0}, \quad 
		\rv{\hDs \hnab \co_s \cdot \hngs}_{\Gs} = \rv{f^3}_{t = 0}.
	\end{equation*}
	Then the parabolic problems \eqref{parabolic: cf} and \eqref{parabolic: cs} admit unique strong solutions $ \hcf $ and $ \hcs $ in $ \YT^3 $ respectively. Moreover, there exist a constant $ C > 0 $ and a time $ T_0 > 0 $ such that for $ 0 < T < T_0 $,
	\begin{equation}
		\label{thm3.2:estimates}
		\norm{\hc}_{\YT^3} \leq C \norm{(f^1, f^2, f^3, \co)}_{\ZT^c \times \Dqc},
	\end{equation}
	where $ \ZT^c := \ZT^5 \times \ZT^6 \times \ZT^7 $ with
	\begin{gather*}
		\ZT^5 := \Lq{q}( 0,T; \Lq{q}(\tO) ), \\
		\ZT^6 := \W{1 - \frac{1}{q}, \onehalf \left( 1 - \frac{1}{q} \right)} \left( \Gamma \times (0,T) \right), \quad
		\ZT^7 := \W{1 - \frac{1}{q}, \onehalf \left( 1 - \frac{1}{q} \right)} \left( \Gs \times (0,T) \right).
	\end{gather*}
\end{theorem}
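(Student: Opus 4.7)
The plan is to treat \eqref{parabolic: cf} and \eqref{parabolic: cs} as two independent scalar parabolic problems with inhomogeneous Neumann data and to apply, essentially verbatim, the standard maximal $L^q$-regularity result for the heat equation on a bounded $C^3$ domain that is recorded in Appendix \ref{parabolic}. Geometrically, $\partial\Of=\Gamma$ is a single closed $C^3$ hypersurface, while $\partial\Os=\Gamma\cup\Gs$ decomposes into two disjoint closed $C^3$ components on which we impose a single Neumann condition each. Since $-\hDf\hDelta$ and $-\hDs\hDelta$ with Neumann conditions are elliptic operators with constant coefficients $\hDf,\hDs>0$ satisfying the Lopatinskii--Shapiro condition, both problems fall directly into the framework of Appendix \ref{parabolic}.

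First I would treat \eqref{parabolic: cf}. Given $(f_f^1,f_f^2)\in\ZT^5|_{\Of}\times\ZT^6$ and $\co_f\in\W{2(1-1/q)}(\Of)$, the appendix result produces a unique
$$\hcf\in\Lq{q}(0,T;\W{2}(\Of))\cap\W{1}(0,T;\Lq{q}(\Of))$$
provided the single compatibility condition $\rv{\hDf\hnab\co_f\cdot\hng}_{\Gamma}=\rv{f_f^2}_{t=0}$ holds; this last trace identity is meaningful because $q>n+2$ forces $\W{2(1-1/q)}(\Of)\hookrightarrow C^1(\Bar{\Of})$, so $\hnab\co_f\cdot\hng$ has a well-defined trace in the right Besov class on $\Gamma$. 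This is exactly the hypothesis made in the statement. The accompanying estimate
$$\norm{\hcf}_{\YT^3|_{\Of}}\le C(T)\bigl(\norm{f_f^1}_{\ZT^5|_{\Of}}+\norm{f_f^2}_{\ZT^6}+\norm{\co_f}_{\W{2(1-1/q)}(\Of)}\bigr)$$
follows. For \eqref{parabolic: cs} the argument is identical, now with two disjoint Neumann patches $\Gamma$ and $\Gs$: one applies the same appendix result on $\Os$ with boundary data $f_s^2$ on $\Gamma$ and $f^3$ on $\Gs$, using the two compatibility conditions listed in the theorem. Adding the two estimates yields \eqref{thm3.2:estimates}.

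The only point that requires a bit of care is the $T$-uniformity of the constant $C$ in \eqref{thm3.2:estimates}. The appendix result typically provides a constant which, for the initial value part, is $T$-independent, while the dependence on $T$ of the boundary-data contribution can be absorbed by restricting to $T\le T_0$ for some fixed $T_0>0$ (using the same argument as in Theorem \ref{twophase: theorem}, where $T$-dependent embedding constants for Sobolev--Slobodeckij spaces on a bounded time interval are controlled for $T\in(0,T_0]$). Once a single $T_0>0$ is fixed, one can extend to arbitrary $T$ by iterating on $[0,T_0],[T_0,2T_0],\ldots$ exactly as in the last step of the proof of Theorem \ref{twophase: theorem}.

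The main (minor) obstacle I foresee is purely bookkeeping: matching the anisotropic Besov regularity $\W{1-1/q,\frac{1}{2}(1-1/q)}$ of the boundary data in $\ZT^6,\ZT^7$ to the boundary trace space of $\hnab\hc\cdot\hn$ produced by $\hc\in\YT^3$, and verifying that the $t=0$ traces of $f_f^2,f_s^2,f^3$ indeed lie in $\W{1-3/q}(\Gamma)$, $\W{1-3/q}(\Gs)$ so that the compatibility identities make sense pointwise on the respective hypersurfaces. This is a standard consequence of the mixed-derivative embedding for $\W{r,s}$-spaces and Lemma \ref{embedding: Ws Wr}, and presents no genuine difficulty once the spaces are unwound. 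No contraction or fixed-point argument is needed here, because the two problems are scalar, linear and completely decoupled from each other and from the Stokes system.
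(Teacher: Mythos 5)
Your proposal is correct and follows essentially the same route as the paper: the paper also obtains Theorem \ref{parabolic: theorem} by applying the Neumann heat-equation maximal $L^q$-regularity result of Appendix \ref{parabolic} (Proposition \ref{parabolic: proposition}) separately to the two decoupled problems \eqref{parabolic: cf} and \eqref{parabolic: cs}, with the $T$-uniform constant coming directly from that proposition. Your additional remarks on the disjoint boundary components of $\Os$, the sense of the compatibility traces, and the $T$-independence of the constant are just the details the paper leaves implicit.
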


\subsection{Ordinary differential equations for foam cells and growth}
Given functions $ (f^4, f^5) $ in $ \Os $, we have
\begin{align}
	\label{foam cells}
	\begin{aligned}
		\pt \hcss - \beta \hcs = f^4, \quad & \text{in}\  \Os \times (0, T), \\
		\rv{\hcss}_{t = 0} = 0, \quad & \text{in}\  \Os,
	\end{aligned} \\
	\label{growth metric}
	\begin{aligned}
		\pt \hg - \frac{\gamma \beta}{n \hrs} \hcs = f^5, \quad & \text{in}\  \Os \times (0, T), \\
		\rv{\hg}_{t = 0} = 1, \quad & \text{in}\  \Os.
	\end{aligned}
\end{align}
Since \eqref{foam cells} and \eqref{growth metric} are linear ordinary differential equations, it follows easily from $ \hcs \in \YT^3 $ in Theorem \ref{parabolic: theorem} that system \eqref{foam cells} and \eqref{growth metric} admits a unique solution $ \hcss $ and $ \hg $, respectively, both in $ \ZT^8 := \Lq{q}(0,T; \W{1}(\Os)) $. Moreover, there exists a constant $ C $ independent of $ T $ such that
\begin{equation}
	\label{estimate: foam cells and growth}
	\norm{\hcss}_{\YT^4} + \norm{\hg}_{\YT^4}
	\leq C \norm{(f^4, f^5, \hc)}_{\ZT^8 \times \ZT^8 \times \YT^3}.
\end{equation}

\section{Local in time existence}
\label{section 4}
This section is intended to prove Theorem \ref{main}. 

\subsection{Some key estimates}
Before showing Theorem \ref{main}, let us give some useful estimates with regard to the deformation gradient $ \inv{\hF} $and vector-valued Sobolev-Slobodeckij space $ \W{\onehalf - \varepsilon}(0,T; \Lq{q}(\OM)) $.
\begin{lemma}[Estimates on deformation gradient]
	\label{F}
	Let $ q > n $, $ n \geq 2 $. $ \hF(\hv) $ is a deformation gradient defined in \eqref{DG} corresponding to a function $ \hv \in \YT^1 $. Then for every $ R > 0 $, there are a constant $ C = C(R) > 0 $ and a finite time $ 0 < T_R < 1 $ depending on $ R $ such that for all $ 0 < T < T_R $, $ \inv{\hF} $ exists and
	\begin{enumerate}[label = (\arabic*)]
		\item 
		$ \displaystyle \norm{\inv{\hF}}_{\Lq{\infty}\left( 0,T; \W{1}(\tO) \right)^{n \times n}} \leq C $, \quad
		$ \norm{\pt \inv{\hF}}_{\Lq{q} \left( 0,T; \W{1}(\tO) \right)^{n \times n}} 
		\leq C \norm{\hv}_{\YT^1} $;
		\item $ \displaystyle \norm{\FOinv}_{\Lq{\infty}(0,T; \W{1}(\tO))^{n \times n}} \leq C \TQ \norm{\hv}_{\YT^1} $;
		\item $\displaystyle \sup_{0 \leq t \leq T} \left( \int_{0}^{t} \frac{\norm{\Dh\left( \FOinv \right)(\cdot, t)}_{\W{1}(\tO)^{n \times n}}^q}{h^{1+\frac{q}{2q'}}} \d h \right)^{\frac{1}{q}} \leq C \TQQ \norm{\hv}_{\YT^1}$;
		\item $ \displaystyle \seminorm{ \FOinv }_{\W{\onehalf\left(1-\frac{1}{q}\right)} \left(0,T; \W{1}(\tO) \right)^{n \times n}} \leq C T^{\frac{1}{q} + \frac{1}{2q'}} \norm{\hv}_{\YT^1} $,
	\end{enumerate}
	for all $ \norm{\hv}_{\YT^1} \leq R $, where $ \Dh f(t) := f(t) - f(t - h) $ is a difference of the time shift for a function $ f $. 
	Moreover, for another $ \hu \in \YT^1 $ with $ \norm{\hu}_{\YT^1} \leq R $ and $ \rv{\hv}_{t = 0} = \rv{\hu}_{t = 0} $, we have 
	\begin{enumerate}[start = 5, label = (\arabic*)]
		\item $ \displaystyle \norm{ \inv{\hF}(\hu) - \inv{\hF}(\hv) }_{\Lq{\infty}\left(0,T; \W{1}(\tO)\right)^{n \times n}} \leq C \TQ \norm{\hu - \hv}_{\YT^1} $; \\
		$ \norm{\pt \inv{\hF}(\hu) - \pt \inv{\hF}(\hv)}_{\Lq{q}\left(0,T; \Lq{\infty}(\tO)\right)^{n \times n}} \leq C T^{\frac{1}{q} - \frac{1}{r}} \norm{\hu - \hv}_{\YT^1} $;
		\item $ \displaystyle \sup_{0 \leq t \leq T} \left( \int_{0}^{t} \frac{\norm{ \Dh\left( \inv{\hF}(\hu) - \inv{\hF}(\hv) \right)(\cdot, t) }_{\W{1}(\tO)^{n \times n}}^q}{h^{1+\frac{q}{2q'}}} \d h \right)^{\frac{1}{q}}
		\leq C \TQQ \norm{\hu - \hv}_{\YT^1} $;
		\item $ \displaystyle \seminorm{ \inv{\hF}(\hu) - \inv{\hF}(\hv) }_{\W{\onehalf\left(1-\frac{1}{q}\right)} \left(0,T; \W{1}(\tO) \right)^{n \times n}} \leq C T^{\frac{1}{q} + \frac{1}{2q'}} \norm{\hu - \hv}_{\YT^1} $,
	\end{enumerate}
	where $ r = \frac{q^2}{n} $.
\end{lemma}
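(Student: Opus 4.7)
The plan rests on the algebra property of $\W{1}(\tO)$ for $q > n$ (Lemma \ref{algebra}) combined with the identities $\hF(t) = \bbI + \int_0^t \hnab \hv\,d\tau$ and $\pt \inv{\hF} = -\inv{\hF}\,\hnab\hv\,\inv{\hF}$ (from \eqref{DtA}). First I would verify that $\inv{\hF}$ exists on a short interval: Hölder in time gives $\norm{\hF(t) - \bbI}_{\W{1}(\tO)} \leq t^{1/q'} \norm{\hnab\hv}_{\Lq{q}(0,T; \W{1}(\tO))^{n\times n}} \leq C\,\TQ\,R$, so for $T \leq T_R$ small enough (depending on $R$) this is $\leq 1/2$, and a Neumann series in the Banach algebra $\W{1}(\tO)$ yields $\inv{\hF} \in \Lq{\infty}(0,T; \W{1}(\tO))^{n \times n}$ uniformly in $T$, proving (1)$_1$. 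Plugging this uniform bound into $\pt \inv{\hF} = -\inv{\hF}\,\hnab\hv\,\inv{\hF}$ and using the algebra property again gives (1)$_2$, while (2) follows immediately from $\FOinv = -\inv{\hF}(\hF - \bbI)$ together with the displayed bound on $\hF - \bbI$.

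For (3) and (4), the key identity is
\begin{equation*}
\Dh(\FOinv)(t) = \inv{\hF}(t) - \inv{\hF}(t-h) = \int_{t-h}^{t} \pt\inv{\hF}(\tau)\,d\tau,
\end{equation*}
whence Hölder in time and the algebra property give $\norm{\Dh(\FOinv)(t)}_{\W{1}(\tO)}^q \leq C\,h^{q/q'} \int_{t-h}^t \norm{\hnab\hv(\tau)}_{\W{1}(\tO)}^q d\tau$. Dividing by $h^{1 + q/(2q')}$, noting that $q/q' - q/(2q') = q/(2q')$, bounding the inner $\Lq{q}$-norm uniformly by $\norm{\hv}_{\YT^1}^q$, and integrating $h$ over $(0,t)$ produces the pointwise-in-$t$ bound $C\,t^{q/(2q')} \norm{\hv}_{\YT^1}^q$; taking the $q$-th root and the supremum in $t$ yields (3). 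Claim (4) is a direct integration of (3) in $t \in (0,T)$: by Fubini, the $\W{1/(2q')}$-in-time seminorm is exactly the $\Lq{q}$-in-$t$ norm of the expression inside (3), producing the extra factor $T^{1/q}$.

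For the difference estimates (5)--(7), the central algebraic identity is
\begin{equation*}
\inv{\hF}(\hu) - \inv{\hF}(\hv) = \inv{\hF}(\hu)\bigl(\hF(\hv) - \hF(\hu)\bigr)\inv{\hF}(\hv),
\end{equation*}
together with $\hF(\hu) - \hF(\hv) = \int_0^t \hnab(\hu - \hv)\,d\tau$; this gives (5)$_1$ exactly as in (2). For (5)$_2$, I would expand
\begin{align*}
\pt\inv{\hF}(\hu) - \pt\inv{\hF}(\hv)
&= -(\inv{\hF}(\hu) - \inv{\hF}(\hv))\,\hnab\hu\,\inv{\hF}(\hu) \\
&\quad - \inv{\hF}(\hv)\,\hnab(\hu - \hv)\,\inv{\hF}(\hu) \\
&\quad - \inv{\hF}(\hv)\,\hnab\hv\,(\inv{\hF}(\hu) - \inv{\hF}(\hv));
\end{align*}
the outer terms are controlled by (5)$_1$ and the embedding $\W{1}(\tO) \hookrightarrow \Lq{\infty}(\tO)$ (valid since $q > n$), while the middle term is handled by Hölder in time with exponents $q < r = q^2/n$, yielding exactly the factor $T^{1/q - 1/r}$ provided $\hnab(\hu - \hv)$ is controlled in $\Lq{r}(0,T; \Lq{\infty}(\tO))$; this last fact comes from interpolating $\Lq{q}(0,T; \W{2}(\tO))$ against $C([0,T]; \W{2-2/q}(\tO))$ (Lemma \ref{embedding}) and exploiting the common initial datum $\rv{\hu}_{t=0} = \rv{\hv}_{t=0}$. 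Finally, (6) and (7) repeat the arguments that produced (3) and (4), now applied to $\inv{\hF}(\hu) - \inv{\hF}(\hv)$ via $\Dh = \int_{t-h}^t \pt$ together with the expansion above.

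The main obstacle is the careful accounting of time exponents in (5)$_2$--(7): exploiting $\rv{\hu}_{t=0} = \rv{\hv}_{t=0}$ to gain the factor $\TQ$ in the $\W{1}$-estimate of $\hF(\hu) - \hF(\hv)$ is straightforward, but the precise exponent $T^{1/q - 1/r}$ with $r = q^2/n$ demands an interpolation that trades spatial smoothness for improved time integrability; all other estimates reduce to routine algebra-norm manipulations once the uniform $\W{1}$-bound on $\inv{\hF}$ is in hand.
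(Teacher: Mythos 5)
Your proposal is correct and follows essentially the same route as the paper: Neumann series in the Banach algebra $\W{1}(\tO)$ for the uniform bound on $\inv{\hF}$, the identity $\pt\inv{\hF} = -\inv{\hF}\hnab\hv\inv{\hF}$ with Hölder in time for the smallness factors, the representation $\Dh(\cdot)=\int_{t-h}^{t}\pt(\cdot)\,\d\tau$ for the fractional seminorms, the telescoping identity for differences, and the Gagliardo--Nirenberg/interpolation step (using the common initial datum) to obtain the factor $T^{\frac{1}{q}-\frac{1}{r}}$ with $r=q^2/n$. The minor variations (deriving (2) from $\FOinv=-\inv{\hF}(\hF-\bbI)$ rather than integrating $\pt\inv{\hF}$, and a slightly different telescoping of $\pt\inv{\hF}(\hu)-\pt\inv{\hF}(\hv)$) are equivalent to the paper's computations.
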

\begin{proof}
%	From \eqref{DG}, we know
%	\begin{equation*}
%		\norm{\hF}_{\W{1} \left( 0,T; \W{1}(\tO) \right)^{n \times n}} = \norm{\hnab \hv}_{\Lq{q} \left( 0,T; \W{1}(\tO) \right)^{n \times n}} \leq C \norm{\hv}_{\Lq{q} \left( 0,T; \W{2}(\tO) \right)^n}.
%	\end{equation*}
%	Since up to signs, $ \inv{\hF} $ and $ \invtr{\hF} $ are obtained by reordering the components of $ \hF $, then
%	\begin{align*}
%		& \norm{\inv{\hF}}_{\W{1} \left( 0,T; \W{1}(\tO) \right)^{n \times n}} \leq C \norm{\hv}_{\Lq{q} \left( 0,T; \W{2}(\tO) \right)^n}, \\
%		& \norm{\invtr{\hF}}_{\W{1} \left( 0,T; \W{1}(\tO) \right)^{n \times n}} \leq C \norm{\hv}_{\Lq{q} \left( 0,T; \W{2}(\tO) \right)^n}.
%	\end{align*}
%	Consequently,
%	\begin{equation*}
%		\begin{aligned}
%			\norm{\inv{\hF}}_{\Lq{\infty}(0,T; \W{1}(\tO))^{n \times n}}
%			& \leq \norm{\inv{\hFo}}_{\W{1}(\tO)^{n \times n}} + \int_{0}^{T} \norm{ \pt \inv{\hF} (\cdot, \tau) }_{\W{1}(\tO)^{n \times n}} \d \tau \\	
%			& \leq \norm{\inv{\hFo}}_{\W{1}(\tO)^{n \times n}} + C \TQ \norm{\hv}_{\Lq{q} \left( 0,T; \W{2}(\tO) \right)^n} \\
%			& \leq C \left( 1 + \TQ \norm{\hv}_{\YT^1} \right),
%		\end{aligned}
%	\end{equation*}
%	\begin{equation*}
%		\norm{\invtr{\hF}}_{\Lq{\infty}(0,T; \W{1}(\tO))^{n \times n}}
%		\leq C \left( 1 + \TQ \norm{\hv}_{\YT^1} \right),
%	\end{equation*}
	
%	Recall that
%	\begin{align*}
%		\varphi(X,t) = X + \int_0^t \hv (X, \tau) \d \tau,
%	\end{align*}
%	then
%	\begin{align*}
%		\norm{\varphi(X,t) - \id}_{C}
%	\end{align*}
	
	Recalling from \eqref{DG} the definition of $ \hF $ that
	\begin{equation*}
		\hF = \bbI + \int_{0}^{t} \hnab \hv(X,\tau) \d \tau, \quad \forall X \in \hO.
	\end{equation*}
%	Since $ \W{1} (\tO) \hookrightarrow C^{0, 1 - \frac{n}{q}} (\overline{\Omega}) \hookrightarrow C^0 (\overline{\Omega}) $ for $ q > n $,
	Then we have 
	\begin{align*}
		\sup_{0 \leq t \leq T} \norm{\hF - \bbI}_{\W{1}(\tO)^{n \times n}} 
		= \sup_{0 \leq t \leq T} \norm{\int_0^t \hnab \hv(X, \tau) \d \tau}_{\W{1}(\tO)^{n \times n}}
		\leq C T^{\frac{1}{q'}} R,
	\end{align*}
	for all $ \norm{\hv}_{\YT^1} \leq R $. Choosing $ T_R > T $ small such that $ C T_R^{\frac{1}{q'}} R \leq \frac{1}{2M_q} $, we know
	\begin{align*}
		\sup_{0 \leq t \leq T} \norm{\hF - \bbI}_{\W{1}(\tO)^{n \times n}} \leq \frac{1}{2M_q},
	\end{align*}
	where $ M_q $ is the constant of multiplication of $ \W{1}(\tO) $, see Lemma \ref{algebra}. According to the Neumann series (see \cite[Section 5.7]{Alt2016}), $ \inv{\hF} $ does exist and
	\begin{align*}
		\inv{\hF}
		= \left( \hF - \bbI + \bbI \right)^{-1}
		= \left( \bbI - \left( \bbI - \hF \right) \right)^{-1}
		= \sum_{k = 0}^{\infty} \left( \bbI - \hF \right)^k.
	\end{align*}
	Then from Lemma \ref{algebra}, one obtains
	\begin{align*}
		& \sup_{0 \leq t \leq T} \norm{\inv{\hF}}_{\W{1}(\tO)^{n \times n}} 
		\leq \sup_{0 \leq t \leq T} \sum_{k = 0}^{\infty} \norm{\left( \bbI - \hF \right)^k}_{\W{1}(\tO)^{n \times n}} \\
		& \qquad \qquad \leq \frac{1}{M_q} \sum_{k = 0}^{\infty} \left( M_q \sup_{0 \leq t \leq T} \norm{ \bbI - \hF }_{\W{1}(\tO)^{n \times n}} \right)^k 
		\leq \frac{1}{M_q} \sum_{k = 0}^{\infty} \left( \onehalf \right)^k = \frac{2}{M_q},
	\end{align*}
%	\begin{align*}
%		\sup_{0 \leq t \leq T} \norm{\inv{\hF}}_{C^0(\overline{\Omega})}
%		\leq \sum_{k = 0}^{\infty} \left(\onehalf\right)^k = 2,
%	\end{align*}
%	
%	
%	
%	From \eqref{DtA} and the Banach algebra of $ \W{1}(\tO) $ with $ q > n $,
%	\begin{equation*}
%		\begin{aligned}
%			& \norm{\inv{\hF}}_{\W{1}(\tO)^{n \times n}} \\
%			& \quad \leq \norm{\inv{\hFo}}_{\W{1}(\tO)^{n \times n}} + \int_{0}^{t} \norm{ \pt \inv{\hF} (\cdot, \tau) }_{\W{1}(\tO)^{n \times n}} \d \tau \\
%			& \quad = \norm{\inv{\hFo}}_{\W{1}(\tO)^{n \times n}} + \norm{ \hnab \hv }_{\Lq{q}\left( 0,T: \W{1}(\tO) \right)^{n \times n}} + \int_{0}^{t} \norm{ \inv{\hF} (\cdot, \tau) }_{\W{1}(\tO)^{n \times n}}^{2q'} \d \tau.
%		\end{aligned}
%	\end{equation*}
%	Applying the Gronwall lemma, we get
%	\begin{align*}
%		\sup_{0 \leq t \leq T} \norm{\inv{\hF}}_{\W{1}(\tO)^{n \times n}}
%		\leq C(R), 
%	\end{align*}
%	for all $ 0 < T < T_2 $ with $ T_2 = \frac{1}{(2q' - 1)(C' + R)^{2q' - 1}} $, $ C' = \abs{\tO}^\frac{1}{q} $. 
	Consequently, if follows from \eqref{DtA} and Lemma \ref{algebra} that
	\begin{align*}
		& \norm{\pt \inv{\hF}}_{\Lq{q} \left( 0,T; \W{1}(\tO) \right)^{n \times n}} \\
		& \qquad \leq M_q^2 \norm{\inv{\hF}}_{\Lq{\infty}\left( 0,T; \W{q}(\tO) \right)^{n \times n}}^2 \norm{ \hnab \hv }_{\Lq{q}\left( 0,T: \W{1}(\tO) \right)^{n \times n}} 
		\leq C \norm{\hv}_{\YT^1},
	\end{align*}
	for all $ 0 < T < T_R $ and
%	\begin{equation*}
%		\begin{aligned}
%			\norm{\FO}_{\Lq{\infty}(0,T; \W{1}(\tO))^{n \times n}}
%			& \leq \int_{0}^{T} \norm{ \pt \hF (\cdot, \tau) }_{\W{1}(\tO)^{n \times n}} \d \tau \\	
%			& \leq C \TQ \norm{\hv}_{\Lq{q} \left( 0,T; \W{2}(\tO) \right)^n} \\
%			& \leq C \TQ \norm{\hv}_{\YT^1},
%		\end{aligned}
%	\end{equation*}
	\begin{align*}
		\norm{\FOinv}_{\Lq{\infty}\left(0,T; \W{1}(\tO)\right)^{n \times n}}
		\leq \int_{0}^{T} \norm{ \pt \inv{\hF} (\cdot, \tau) }_{\W{1}(\tO)^{n \times n}} \d \tau \leq C \TQ \norm{\hv}_{\YT^1},
	\end{align*}
	where $ C = C(R) $ depends on $ R $. These estimates prove first two statements.
	
	For the third and fourth statements, we have
	\begin{equation*}
		\begin{aligned}
			\norm{\Dh\left( \FOinv \right) (\cdot, t)}_{\W{1}(\tO)^{n \times n}} 
			\leq \int_{t - h}^{t} \norm{\pt \inv{\hF} (\cdot, \tau)}_{\W{1}(\tO)^{n \times n}} \d \tau \leq C h^{\frac{1}{q'}} \norm{\hv}_{\YT^1},
		\end{aligned}
	\end{equation*}
	which can be used to deduce
	\begin{align*}
		& \sup_{0 \leq t \leq T} \left( \int_{0}^{t} \frac{\norm{\Dh\left( \FOinv \right)(\cdot, t)}_{\W{1}(\tO)^{n \times n}}^q}{h^{1+\frac{q}{2q'}}} \d h \right)^{\frac{1}{q}} \\
		& \qquad \leq C \sup_{0 \leq t \leq T} \left( \int_{0}^{t} h^{-1+\frac{q}{2q'}} \d h \right)^{\frac{1}{q}} \norm{\hv}_{\YT^1} 
		= C 2 q' \sup_{0 \leq t \leq T} t^{\frac{1}{2q'}} \norm{\hv}_{\YT^1}
		\leq C \TQQ \norm{\hv}_{\YT^1},
	\end{align*}
	and therefore from \eqref{LI} and the definition of Sobolev-Slobodeckij space,
	\begin{align*}
		\seminorm{ \FOinv }_{\W{\onehalf\left(1-\frac{1}{q}\right)} \left(0,T; \W{1}(\tO) \right)^{n \times n}} 
		\leq C T^{\frac{1}{q} + \frac{1}{2q'}} \norm{\hv}_{\YT^1}.
	\end{align*}
	
	For the rest statements, we notice from \eqref{DG} that
	\begin{equation*}
		\hF(\hu) - \hF(\hv) = \int_{0}^{t} \left( \hnab \hu - \hnab \hv \right)(X,\tau) \d \tau.
	\end{equation*}
	Then for all $ 0 < T < T_R $,
	\begin{equation*}
		\sup_{0 \leq t \leq T} \norm{\hF(\hu) - \hF(\hv)}_{\W{1}(\tO)^{n \times n}} \leq C \TQ \norm{\hu - \hv}_{\YT^1}.
	\end{equation*}
	Since
	\begin{align*}
		\inv{\hF}(\hu) - \inv{\hF}(\hv) 
		= - \inv{\hF}(\hu) \left( \hF(\hu) - \hF(\hv) \right) \inv{\hF}(\hv),
	\end{align*}
	it follows from the multiplication property of $ \W{1}(\tO) $ again that for all $ 0 < T < T_R $,
	\begin{align*}
		& \sup_{0 \leq t \leq T} \norm{\inv{\hF}(\hu) - \inv{\hF}(\hv)}_{\W{1}(\tO)^{n \times n}} \\
		& \qquad \leq M_q^2 \sup_{0 \leq t \leq T} \norm{\inv{\hF}(\hu)}_{\W{1}(\tO)^{n \times n}} \norm{\inv{\hF}(\hv)}_{\W{1}(\tO)^{n \times n}} \norm{\hF(\hu) - \hF(\hv)}_{\W{1}(\tO)^{n \times n}} \\
		& \qquad \qquad \leq C \TQ \norm{\hu - \hv}_{\YT^1}.
	\end{align*}
	Moreover,
	\begin{align}
		& \pt \inv{\hF}(\hu) - \pt \inv{\hF}(\hv)  = - \pt \inv{\hF}(\hu) \left( \hF(\hu) - \hF(\hv) \right) \inv{\hF}(\hv) \label{ptF: difference} \\
		& \qquad - \inv{\hF}(\hu) \pt \left( \hF(\hu) - \hF(\hv) \right) \inv{\hF}(\hv)  - \inv{\hF}(\hu) \left( \hF(\hu) - \hF(\hv) \right) \pt \inv{\hF}(\hv) \nonumber.
	\end{align}
	Hence
	\begin{align*}
		& \norm{\pt \inv{\hF}(\hu) - \pt \inv{\hF}(\hv)}_{\Lq{q}\left( 0,T; \Lq{\infty}(\tO) \right)^{n \times n}} \\
		& \qquad \leq \norm{\pt \inv{\hF}(\hu) \left( \hF(\hu) - \hF(\hv) \right) \inv{\hF}(\hv)}_{\Lq{q}\left( 0,T; \Lq{\infty}(\tO) \right)^{n \times n}} \\
		& \qquad \quad + \norm{\inv{\hF}(\hu) \left( \hnab \hu - \hnab \hv \right) \inv{\hF}(\hv)}_{\Lq{q}\left( 0,T; \Lq{\infty}(\tO) \right)^{n \times n}} \\
		& \qquad \quad + \norm{\inv{\hF}(\hu) \left( \hF(\hu) - \hF(\hv) \right) \pt \inv{\hF}(\hv)}_{\Lq{q}\left( 0,T; \Lq{\infty}(\tO) \right)^{n \times n}}
		=: F_1 + F_2 + F_3.
	\end{align*}
	From the embedding \eqref{embedding: W21 W1}, 
%	\begin{equation*}
%		\Lq{q}\left( 0,T; \W{2}(\tO) \right) \cap \W{1}\left( 0,T; \Lq{q}(\tO) \right) \hookrightarrow C\left( [0,T]; \W{1}(\tO) \right),
%	\end{equation*}
	we know that for $ \hv \in \YT^1 $,
	\begin{equation*}
		\sup_{0 \leq t \leq T} \norm{\hnab \hv}_{\Lq{q}(\tO)^{n \times n}}
		\leq C \left( \norm{\hv}_{\YT^1} + \norm{\rv{\hv}_{t = 0}}_{\W{1}(\tO)} \right).
	\end{equation*}
	The Gagliardo–Nirenberg inequality tells us
	\begin{align*}
		\norm{\hnab \hv}_{\Lq{\infty}(\tO)^{n \times n}}
		\leq C \norm{\hnab \hv}_{\Lq{q}(\tO)^{n \times n}}^{1 - \frac{n}{q}} \norm{\hnab \hv}_{\W{1}(\tO)^{n \times n}}^{\frac{n}{q}}.
	\end{align*}
	For $ r = \frac{q^2}{n} > q $, we obtain
	\begin{align*}
		\norm{\hnab \hv}_{\Lq{r} \left( 0,T; \Lq{\infty}(\tO) \right)^{n \times n}}
		& \leq C \norm{\hnab \hv}_{\Lq{\infty}\left( 0,T; \Lq{q}(\tO) \right)^{n \times n}}^{1 - \frac{n}{q}} \norm{\hnab \hv}_{\Lq{q} \left( 0,T; \W{1}(\tO) \right)^{n \times n}}^{\frac{n}{q}} \leq C(R).
	\end{align*}
	Then, 
	\begin{align*}
		\norm{\hnab \hv}_{\Lq{q} \left( 0,T; \Lq{\infty}(\tO) \right)^{n \times n}}
		\leq T^{\frac{1}{q} - \frac{1}{r}} \norm{\hnab \hv}_{\Lq{r} \left( 0,T; \Lq{\infty}(\tO) \right)^{n \times n}}
		\leq C(R) T^{\frac{1}{q} - \frac{1}{r}},
	\end{align*}
	and also, for $ \hu \in \YT^1 $, $ \norm{\hu}_{\YT^1} \leq R $,
	\begin{equation}
		\label{hv: difference}
		\norm{\hnab \hv - \hnab \hu}_{\Lq{q} \left( 0,T; \Lq{\infty}(\tO) \right)^{n \times n}}
		\leq C(R) T^{\frac{1}{q} - \frac{1}{r}} \norm{\hv - \hu}_{\YT^1}.
	\end{equation}
	Consequently, with $ \W{1}(\tO) \hookrightarrow \Lq{\infty}(\tO) $ for $ q > n $,
	\begin{align*}
		F_1 
		& \leq \norm{\pt \inv{\hF}(\hu)}_{\Lq{q}\left( 0,T; \Lq{\infty}(\tO) \right)^{n \times n}} \\
		& \qquad \qquad \times 
		\norm{\hF(\hu) - \hF(\hv)}_{\Lq{\infty}\left( 0,T; \Lq{\infty}(\tO) \right)^{n \times n}} \norm{\inv{\hF}(\hv)}_{\Lq{\infty}\left( 0,T; \Lq{\infty}(\tO) \right)^{n \times n}} \\
		& \quad \leq \norm{\hnab \hu}_{\Lq{q} \left( 0,T; \Lq{\infty}(\tO) \right)^{n \times n}}
		\norm{\inv{\hF}(\hu)}_{\Lq{\infty}\left( 0,T; \Lq{\infty}(\tO) \right)^{n \times n}}^2 \\
		& \qquad \qquad \times
		\norm{\hF(\hu) - \hF(\hv)}_{\Lq{\infty}\left( 0,T; \Lq{\infty}(\tO) \right)^{n \times n}} 
		\norm{\inv{\hF}(\hv)}_{\Lq{\infty}\left( 0,T; \Lq{\infty}(\tO) \right)^{n \times n}} \\
		& \quad \leq C T^{\frac{1}{q} - \frac{1}{r}} \norm{\hu - \hv}_{\YT^1}.
	\end{align*}
	Similarly,
	\begin{align*}
		F_2 \leq C T^{\frac{1}{q} - \frac{1}{r}} \norm{\hu - \hv}_{\YT^1}, \quad F_3 \leq C T^{\frac{1}{q} - \frac{1}{r}} \norm{\hu - \hv}_{\YT^1}.
	\end{align*}
	Thus,
	\begin{align*}
		\norm{\pt \inv{\hF}(\hu) - \pt \inv{\hF}(\hv)}_{\Lq{q}\left( 0,T; \Lq{\infty}(\tO) \right)^{n \times n}} 
		\leq C T^{\frac{1}{q} - \frac{1}{r}} \norm{\hu - \hv}_{\YT^1}.
	\end{align*}
	Moreover, we can also conclude from \eqref{ptF: difference} that 
	\begin{align*}
		\norm{\pt \inv{\hF}(\hu) - \pt \inv{\hF}(\hv)}_{\Lq{q}\left( 0,T; \W{1}(\tO) \right)^{n \times n}} 
		\leq C \norm{\hu - \hv}_{\YT^1}.
	\end{align*}
	Using that $ \left( \inv{\hF_0}(\hu) - \inv{\hF_0}(\hv) \right) = 0 $,
	\begin{equation*}
		\begin{aligned}
			& \norm{\Dh\left( \inv{\hF}(\hu) - \inv{\hF}(\hv) \right) (\cdot, t)}_{\W{1}(\tO)^{n \times n}} \\
			& \qquad \leq \int_{t - h}^{t} \norm{\pt \left( \inv{\hF}(\hu) - \inv{\hF}(\hv) \right) (\cdot, \tau)}_{\W{1}(\tO)^{n \times n}} \d \tau \leq C h^{\frac{1}{q'}} \norm{\hu - \hv}_{\YT^1}.
		\end{aligned}
	\end{equation*}
	Therefore, for all $ 0 < T < T_R $,
	\begin{align*}
		& \sup_{0 \leq t \leq T} \left( \int_{0}^{h} \frac{\norm{ \Dh\left( \inv{\hF}(\hu) - \inv{\hF}(\hv) \right)(\cdot, t) }_{\W{1}(\tO)^{n \times n}}^q}{h^{1+\frac{q}{2q'}}} \d h \right)^{\frac{1}{q}} \\
		& \qquad \leq C \sup_{0 \leq t \leq T} t^{\frac{1}{2q'}} \norm{\hu - \hv}_{\YT^1}
		= C \TQQ \norm{\hu - \hv}_{\YT^1}.
	\end{align*}
	Again with the help of \eqref{LI} and the definition of Sobolev-Slobodeckij space, one obtains the last statement.	
	This completes the proof.
\end{proof}

\begin{lemma}
	\label{Fw}
	Under the assumption of Lemma \ref{F}, there exist a constant $ C = C(R) > 0 $ and a finite time $ T_R > 0 $ depending on $ R $ such that for all $ 0 < T < T_R $ and for two arbitrary functions $ f(X,t) \in \Lq{q} ( 0,T; \W{1}(\tO) ) $ and $ \bm{f} \in \Lq{q} ( 0,T; \W{2}(\tO) )^n $,
	\begin{enumerate}[label = (\arabic*)]
		\item $ \displaystyle \norm{\left( \inv{\hF}(\hv) - \bbI \right) f}_{\Lq{q} \left( 0,T; \W{1}(\tO) \right)^n} 
		\leq C \TQ \norm{f}_{\Lq{q} \left( 0,T; \W{1}(\tO) \right)} \norm{\hv}_{\YT^1} $; \\
		$ \displaystyle \norm{\left( \inv{\hF}(\hv) - \bbI \right) \left( \hnab \bm{f} \right)}_{\Lq{q} \left( 0,T; \W{1}(\tO) \right)^{n \times n}} 
		\leq C \TQ \norm{\bm{f}}_{\Lq{q} \left( 0,T; \W{2}(\tO) \right)^n} \norm{\hv}_{\YT^1} $;
		\item $ \displaystyle \norm{\left( \inv{\hF}(\hu) - \inv{\hF}(\hv) \right) f}_{\Lq{q} \left( 0,T; \W{1}(\tO) \right)^n} \leq C \TQ \norm{f}_{\Lq{q} \left( 0,T; \W{1}(\tO) \right)} \norm{\hu - \hv}_{\YT^1} $; \\
		$ \displaystyle \norm{\left( \inv{\hF}(\hu) - \inv{\hF}(\hv) \right) \left( \hnab \bm{f} \right)}_{\Lq{q} \left( 0,T; \W{1}(\tO) \right)^{n \times n}} \\
		\leq C \TQ \norm{\bm{f}}_{\Lq{q} \left( 0,T; \W{2}(\tO) \right)^n} \norm{\hu - \hv}_{\YT^1} $;
		\item $ \displaystyle \norm{\left( \inv{\hF}(\hu) - \inv{\hF}(\hv) \right) \left( \hnab \bm{f} \inv{\hF}(\hu) \right)}_{\Lq{q} \left( 0,T; \W{1}(\tO) \right)^{n \times n}} \\
		\leq C \TQ \norm{\bm{f}}_{\Lq{q} \left( 0,T; \W{2}(\tO) \right)^n} \norm{\hu - \hv}_{\YT^1} $.
	\end{enumerate}
\end{lemma}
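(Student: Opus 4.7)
The plan is to reduce each inequality to a product estimate in $W^1_q(\tO)$ combined with the pointwise-in-time bounds from Lemma \ref{F}. Since $q > n$, Lemma \ref{algebra} gives the multiplication property $\norm{fg}_{\W{1}(\tO)} \leq M_q \norm{f}_{\W{1}(\tO)} \norm{g}_{\W{1}(\tO)}$, so for any matrix-valued $A(\cdot,t)$ and scalar/vector-valued $f(\cdot,t)$ one has, pointwise in $t$,
\begin{equation*}
\norm{A f}_{\W{1}(\tO)} \leq M_q \norm{A}_{\W{1}(\tO)} \norm{f}_{\W{1}(\tO)}.
\end{equation*}
Taking the $L^q$-norm in time and pulling $A$ out in $L^\infty_t$, I obtain
\begin{equation*}
\norm{A f}_{\Lq{q}(0,T;\W{1}(\tO))} \leq M_q \norm{A}_{\Lq{\infty}(0,T;\W{1}(\tO))} \norm{f}_{\Lq{q}(0,T;\W{1}(\tO))}.
\end{equation*}

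For statement (1), I specialise to $A = \FOinv$ and apply Lemma \ref{F}(2), which gives $\norm{A}_{\Lq{\infty}(0,T;\W{1}(\tO))} \leq C\TQ \norm{\hv}_{\YT^1}$. This yields the first inequality. For the second, I replace $f$ by each component of $\hnab \bm{f}$; since $\bm{f} \in \Lq{q}(0,T;\W{2}(\tO))^n$, we have $\hnab \bm{f} \in \Lq{q}(0,T;\W{1}(\tO))^{n \times n}$ with $\norm{\hnab \bm{f}}_{\Lq{q}(0,T;\W{1}(\tO))} \leq \norm{\bm{f}}_{\Lq{q}(0,T;\W{2}(\tO))}$, and the same product estimate applies componentwise.

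For statement (2), the argument is identical but with $A = \inv{\hF}(\hu) - \inv{\hF}(\hv)$; here I invoke Lemma \ref{F}(5), namely $\norm{A}_{\Lq{\infty}(0,T;\W{1}(\tO))} \leq C \TQ \norm{\hu - \hv}_{\YT^1}$, to absorb the difference into a factor of $T^{1/q'}$. For statement (3), I view the integrand as a triple product $A \cdot (\hnab \bm{f}) \cdot \inv{\hF}(\hu)$ and apply the multiplication property twice:
\begin{equation*}
\norm{A (\hnab \bm{f}) \inv{\hF}(\hu)}_{\W{1}(\tO)} \leq M_q^2 \norm{A}_{\W{1}(\tO)} \norm{\hnab \bm{f}}_{\W{1}(\tO)} \norm{\inv{\hF}(\hu)}_{\W{1}(\tO)}.
\end{equation*}
Then I integrate in time in $L^q$, placing $A$ and $\inv{\hF}(\hu)$ in $L^\infty_t$ and $\hnab \bm{f}$ in $L^q_t$, and use Lemma \ref{F}(1) to bound $\norm{\inv{\hF}(\hu)}_{\Lq{\infty}(0,T;\W{1}(\tO))}$ by a constant depending on $R$, together with Lemma \ref{F}(5) on the difference $A$.

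There is essentially no obstacle: the only subtlety is verifying that the finite time $T_R$ from Lemma \ref{F} is compatible (which it is, as we are free to take a common $T_R$) and tracking the correct power $\TQ$, which in each case comes from the $L^\infty_t$ bound on the difference quantity and not from the other factors.
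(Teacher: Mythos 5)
Your proposal is correct and follows exactly the route the paper takes: the paper's proof is just the remark that the multiplication property of $\W{1}(\tO)$ for $q>n$ (Lemma \ref{algebra}) combined with the bounds of Lemma \ref{F} yields all three statements, which is precisely your pointwise-in-time product estimate with the deformation-gradient factors placed in $L^\infty_t$ (via Lemma \ref{F}(1),(2),(5)) and $f$ or $\hnab\bm{f}$ in $L^q_t$. No gaps; your write-up simply spells out the details the paper leaves implicit.
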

\begin{proof}
	The key point to deduce these estimates is to use the multiplication property of $ \W{1}(\tO) $ with $ q > n $, which was given in Lemma \ref{algebra}. Then Lemma \ref{F} implies these results.
\end{proof}

\begin{lemma}
	\label{gradu}
	Let $ 1 < q < \infty $, $ T_0 > 0 $ and $ \OM \subseteq \bbr^n $, $ n \geq 2 $, be a bounded domain with $ C^{1,1} $ boundary. Then 
	\begin{equation*}
		\seminorm{\hnab \hv}_{\W{\onehalf - \varepsilon} \left( 0,T; \Lq{q}(\OM) \right)^{n \times n}} 
		\leq C T_0^{\varepsilon} \seminorm{\hv}_{\W{2,1}(\OM \times (0,T))^n},
	\end{equation*}
	for every $ \hv \in \W{2,1}(\OM \times (0,T))^n $, $ \varepsilon \in (0, \onehalf) $ and $ 0 < T < T_0 $. Here $ C $ depends on $ \varepsilon $.
\end{lemma}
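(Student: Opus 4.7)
The target quantity is the Gagliardo seminorm in time of order $\onehalf-\varepsilon$ of one spatial derivative of $\hv$. I would split the passage into two moves: first trade one spatial derivative for half a time derivative via a mixed-derivative embedding, then apply Lemma \ref{embedding: Ws Wr} to give up $\varepsilon$ of time regularity in exchange for the factor $T_0^{\varepsilon}$.

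\textbf{Main step (mixed-derivative embedding).} The key embedding I would establish is
\begin{equation*}
	\W{2,1}(\OM\times(0,T))\;\hookrightarrow\;\W{\onehalf}(0,T;\W{1}(\OM)),
\end{equation*}
with a constant depending only on $T_0$, not on $T\in(0,T_0]$. On the whole real line this is the classical mixed-derivative theorem, equivalent to the real-interpolation identity
\begin{equation*}
	\bigl(\Lq{q}(\bbr;\W{2}(\OM)),\W{1}(\bbr;\Lq{q}(\OM))\bigr)_{\onehalf,q}
	=\W{\onehalf}\bigl(\bbr;(\W{2}(\OM),\Lq{q}(\OM))_{\onehalf,q}\bigr)
	=\W{\onehalf}(\bbr;\W{1}(\OM)).
\end{equation*}
To descend to $(0,T)$ with $T_0$-uniform constant, I would first extend to the fixed interval $(0,T_0)$ by a Seeley-type reflection, multiply by a smooth cutoff supported in $(-T_0,2T_0)$, and then extend by zero to all of $\bbr$; the composition has operator norm depending only on $T_0$. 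Restricting the resulting bound back to $(0,T)$ yields
\begin{equation*}
	\norm{\hnab\hv}_{\W{\onehalf}(0,T;\Lq{q}(\OM))}
	\le C\norm{\hv}_{\W{2,1}(\OM\times(0,T))}.
\end{equation*}

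\textbf{Finishing off.} Applying Lemma \ref{embedding: Ws Wr} to $f=\hnab\hv$ with $s=\onehalf$ and $r=\onehalf-\varepsilon$ on the bounded interval $I=(0,T)$ gives
\begin{equation*}
	\seminorm{\hnab\hv}_{\W{\onehalf-\varepsilon}(0,T;\Lq{q}(\OM))}
	\le T^{\varepsilon}\seminorm{\hnab\hv}_{\W{\onehalf}(0,T;\Lq{q}(\OM))}
	\le T_0^{\varepsilon}\norm{\hnab\hv}_{\W{\onehalf}(0,T;\Lq{q}(\OM))},
\end{equation*}
which chains with the previous display to bound the left-hand side by $CT_0^{\varepsilon}\norm{\hv}_{\W{2,1}}$. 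To replace the full norm by the seminorm on the right, I would observe that both sides of the claimed inequality are invariant under the substitution $\hv\mapsto\hv-\bar\hv(x)$ with $\bar\hv(x):=T^{-1}\int_0^T\hv(x,t)\,dt$: the differences $\hnab\hv(t)-\hnab\hv(\tau)$ are unchanged, and the seminorm on the right grows by at most a factor $2$ (using $\pt\bar\hv=0$ and $\norm{\hnab^2\bar\hv}_{\Lq{q}}\le\norm{\hnab^2\hv}_{\Lq{q}}$ by Jensen). After this normalization, Poincar\'e in time yields $\norm{\hv}_{\Lq{q}(\OM\times(0,T))}\le CT\norm{\pt\hv}_{\Lq{q}(\OM\times(0,T))}\le CT_0\seminorm{\hv}_{\W{2,1}}$, hence $\norm{\hv}_{\W{2,1}}\le(1+CT_0)\seminorm{\hv}_{\W{2,1}}$, closing the estimate.

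\textbf{Main obstacle.} The only substantive technical point is obtaining the mixed-derivative embedding on $(0,T)$ with constant uniform in $T\in(0,T_0]$. A naive direct extension across the endpoints $t=0,T$ can deteriorate as $T\downarrow0$; the extend-to-fixed-interval-then-cutoff trick above is the standard workaround. Everything else—the real interpolation identity on $\bbr$, the time-average subtraction, Poincar\'e in time, and the direct application of Lemma \ref{embedding: Ws Wr}—is routine.
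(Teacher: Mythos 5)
Your overall route—first pass from $\W{2,1}(\OM\times(0,T))$ to $\hnab\hv\in\W{\onehalf}(0,T;\Lq{q}(\OM))$ by a mixed-derivative/interpolation argument, then use Lemma \ref{embedding: Ws Wr} to trade $\varepsilon$ of time regularity for the factor $T^{\varepsilon}$—is exactly the strategy behind the paper's (two-line) proof, which simply invokes \cite[Lemma 4.2]{Abels2005} together with Lemma \ref{embedding: Ws Wr}. The problem is your ``Main step'': the $T$-uniform extension you rely on does not exist in the form you claim. There is no operator extending a general $u\in\W{2,1}(\OM\times(0,T))$ to $\W{2,1}(\OM\times\bbr)$ (or to a fixed interval of length $T_0$) with operator norm bounded uniformly in $T\in(0,T_0]$ when both sides carry the \emph{full} norms. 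Indeed, take $\hv(x,t)=\phi(x)$ time-independent: then $\norm{\hv}_{\W{2,1}(\OM\times(0,T))}\approx T^{1/q}\norm{\phi}_{\W{2}(\OM)}\to 0$ as $T\downarrow 0$, while any extension $\bar v\in\W{2,1}(\OM\times\bbr)$ satisfies $\bar v(\cdot,T/2)=\phi$, so by the (absolute-constant) embedding $\W{2,1}(\OM\times\bbr)\hookrightarrow C(\bbr;\W{2-\frac{2}{q}}(\OM))$ its norm is bounded below by $c\,\norm{\phi}_{\W{2-\frac{2}{q}}(\OM)}$, independently of $T$. Concretely, in your construction a Seeley/reflection extension across $t=T$ only reaches an interval of length comparable to $T$, so the subsequent cutoff must vary on scale $T$, and the term where $\pt$ hits the cutoff costs $T^{-1}\norm{\hv}_{\Lq{q}}$, which is not uniformly controlled. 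This is precisely the obstruction the paper's Appendix \ref{appendix:extension} is designed around: Theorem \ref{extesion: zero initial} assumes a vanishing initial trace and Theorem \ref{extension: general} puts the trace norm $\norm{\rv{u}_{t=0}}_{X_\gamma}$ into the $X_T$-norm.

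The statement is nevertheless true, and the repair is essentially your own ``finishing off'' idea, moved to the front: subtract the time average \emph{before} extending. For $w=\hv-\bar\hv$ with $\bar\hv=T^{-1}\int_0^T\hv\,\d t$, Poincar\'e in time gives $\norm{w}_{\Lq{q}}\leq CT\norm{\pt\hv}_{\Lq{q}}$, so after one reflection across each endpoint the $T^{-1}$ factors produced by a cutoff supported on scale $T$ are harmless, and you get a genuinely $T$-uniform extension of $w$; the whole-line mixed-derivative theorem plus Lemma \ref{embedding: Ws Wr} then give the bound with $\seminorm{\hv}_{\W{2,1}}$ on the right, since $\hnab\bar\hv$ contributes nothing to the left-hand seminorm and $\norm{\hnab^2\bar\hv}_{\Lq{q}}\leq\norm{\hnab^2\hv}_{\Lq{q}}$ by Jensen. (In that final bookkeeping you also need $\norm{\hnab w}_{\Lq{q}(\OM)}\leq C(\norm{\hnab^2 w}_{\Lq{q}(\OM)}+\norm{w}_{\Lq{q}(\OM)})$ on the bounded $C^{1,1}$ domain to control the first-order spatial term of the full norm; you left this implicit.) Alternatively, you could argue as the paper does and quote \cite[Lemma 4.2]{Abels2005} directly, or run your extension through the paper's Theorem \ref{extension: general}, accepting the initial-trace term and then removing it by the same time-average normalization.
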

\begin{proof}
	The lemma can be easily proved by using the argument in \cite[Lemma 4.2]{Abels2005}, where a layer-like domain with $ C^{1,1} $ boundary is considered. Besides, it can be seen as a corollary of Lemma \ref{embedding: Ws Wr}.
\end{proof}

\subsection{Proof of Theorem \ref{main}}
In this subsection, we prove Theorem \ref{main} by applying the strategy of a fixed-point procedure. 

For the proof, we set $ w = (\hv, \hpi, \hc, \hcss, \hg) $, $ \wo := (\vo, \co, 0, 1) $ and reformulate the initial and boundary value problem \NLFSI\  as an abstract equation: 
\begin{equation}
	\label{abstract}
	\sL (w) = \sN(w, \wo), \quad \textrm{for all}\ w \in \YT,\ (\vo, \co) \in \Dq,
\end{equation}
where
\begin{equation*}
	\sL (w) := 
	\left( 
		\begin{gathered}
		\pt \hv - \hdiv \bS( \hv, \hpi ) \\
		\hdiv \left( \hv \right) - \frac{\gamma \beta}{\hrs} \hcs \\
		\jump{\bS( \hv, \hpi )} \cdot \hng \\
		\bS( \hvs, \hpis ) \cdot \hngs \\
		\pt \hc - \hD \hDelta \hc \\
		\hD \hnab \hc \cdot \hng \\
		\hDs \hnab \hcs \cdot \hngs \\
		\pt \hcss - \beta \hcs \\
		\pt \hg - \frac{\gamma \beta}{n \hrs} \hcs \\
		\rv{(\hv, \hc, \hcss, \hg)^\top}_{t = 0}
		\end{gathered} 
	\right), \quad
	\sN (w, \wo) := 
	\left( 
		\begin{gathered}
			\bK(w) \\
			G(w) \\
			\bH^1(w) \\
			\bH^2(w) \\
			F^1(w) \\
			F^2(w) \\
			F^3(w) \\
			F^4(w) \\
			F^5(w) \\
			\wo
		\end{gathered} 
	\right).
\end{equation*}
%Combining Theorem \ref{twophase: theorem} and Lemma \ref{kgh}, we easily get that 
In the sequel, we focus on \eqref{abstract}. For $ \sL $, we have the following proposition.
\begin{proposition}
	\label{isomorphism}
	Let $ \sL $ be defined as in \eqref{abstract}. Then $ \sL $ is an isomorphism from $ \YT $ to $ \ZT \times \Dq $.
\end{proposition}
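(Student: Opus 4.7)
The plan is to verify that $\sL$ is a bounded linear bijection from $\YT$ onto $\ZT \times \Dq$; continuity of the inverse then follows either by chaining the quantitative estimates of Theorems \ref{twophase: theorem} and \ref{parabolic: theorem} and \eqref{estimate: foam cells and growth}, or automatically from the open mapping theorem. Boundedness of $\sL$ is essentially immediate from the definitions: each component of $\sL(w)$ is at worst a second-order linear differential expression whose mapping into the anisotropic Sobolev-Slobodeckij spaces appearing in $\ZT$ is dictated by the very norms defining $\YT^1$, $\YT^2$, $\YT^3$, $\YT^4$, together with the standard trace theorem on the $C^3$ surfaces $\Gamma$ and $\Gs$ for the jump and boundary terms.

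For surjectivity and injectivity, the key observation is that the linearization \NLFSI\ decouples into three blocks which can be solved sequentially. Given data $(\bK, G, \bH^1, \bH^2, F^1, F^2, F^3, F^4, F^5) \in \ZT$ and $\wo = (\vo, \co, 0, 1) \in \Dq$ (with compatibility built into the image), I would proceed in three steps. First, solve the two decoupled parabolic problems \eqref{parabolic: cf}--\eqref{parabolic: cs} with data $(F^1, F^2, F^3, \co)$ by Theorem \ref{parabolic: theorem}, obtaining a unique $\hc \in \YT^3$ with \eqref{thm3.2:estimates}. Second, feed the resulting $\hcs$ into the Stokes block: the divergence equation in the solid becomes $\hdiv \hvs = G_s + \frac{\gamma\beta}{\hrs}\hcs$, and the Remark following Theorem \ref{twophase: theorem} shows that the extra source term lies in $\ZT^2$ because $\hc \in \YT^3$ has the required bulk, time and trace regularities on $\Gamma$ and $\Gs$. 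Theorem \ref{twophase: theorem} then produces a unique $(\hv, \hpi) \in \YT^v$ satisfying \eqref{thm3.1:estimates}. Third, plug $\hcs$ into the scalar linear ODEs \eqref{Solidcss}--\eqref{initialv1g}: since $\hcs \in \YT^3 \hookrightarrow C([0,T]; \W{1}(\Os))$ by Lemma \ref{embedding} and $F^4, F^5 \in \ZT^8$, direct time integration yields unique $\hcss, \hg \in \YT^4$ with \eqref{estimate: foam cells and growth}.

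Injectivity is equivalent to uniqueness for zero data and follows from the uniqueness parts of the same three steps carried out in the same order: zero data in step one forces $\hc = 0$, whence step two has zero data and forces $(\hv, \hpi) = 0$, whence step three forces $\hcss = \hg = 0$. Composing the three estimates yields $\norm{w}_{\YT} \leq C \norm{\sL(w)}_{\ZT \times \Dq}$, so $\sL^{-1}$ is bounded.

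The only substantive point beyond invoking earlier results is verifying the compatibility of the modified divergence source in the Stokes block, i.e.\ that $\hcs$ produced in step one genuinely belongs to $\ZT^2$. This amounts to checking the $\Lq{q}(0,T; \W{1}(\tO)) \cap \W{1}(0,T; \W{-1}(\hO))$ regularity (immediate from $\YT^3$) and, more delicately, the trace regularity $\tr_\Gamma(\hcs), \tr_{\Gs}(\hcs) \in \W{1-\frac{1}{q}, \onehalf(1-\frac{1}{q})}$, for which the $C^3$ smoothness of $\Gamma$ and $\Gs$ and the trace theory for anisotropic spaces are used; this is precisely why the reformulation as two Neumann problems (rather than a transmission problem) in the second Remark of Section \ref{equivalent system} was needed, and is what closes the sequential scheme.
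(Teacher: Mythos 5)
Your proposal is correct and follows essentially the same route as the paper: boundedness of $\sL$ from the definitions, surjectivity from the linear existence results of Section \ref{section 3} (solving the concentration, Stokes, and ODE blocks sequentially, with the divergence source $\frac{\gamma\beta}{\hrs}\hcs$ handled exactly as in the remark following Theorem \ref{twophase: theorem}), injectivity from uniqueness of those linear problems, and boundedness of the inverse via the open mapping/bounded inverse theorem or by chaining the estimates \eqref{thm3.1:estimates}, \eqref{thm3.2:estimates}, \eqref{estimate: foam cells and growth}. Your write-up is in fact more explicit than the paper's terse proof about the sequential decoupling and the $\ZT^2$-regularity of $\hcs$, but the underlying argument is the same.
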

\begin{proof}
	As $ \sL \in \cL (\YT, \ZT \times \Dq) $, it suffices to show that $ \sL $ is bijective, thanks to the bounded inverse theorem.
	
	\textit{Injective.} Take any $ w^1, w^2 \in \YT $. Then, from \eqref{thm3.1:estimates}, \eqref{thm3.2:estimates} and \eqref{estimate: foam cells and growth},
	\begin{equation*}
		\norm{\sL(w^1) - \sL(w^2)}_{\ZT \times \Dq} \leq C \norm{w^1 - w^2}_{\YT},
	\end{equation*}
	which implies the injection of $ \sL $.
	
	\textit{Surjective.} The existence of \eqref{twophase: linear}, \eqref{parabolic: cf}--\eqref{parabolic: cs} and \eqref{foam cells}--\eqref{growth metric} immediately yields the surjection of $ \sL $.
\end{proof}

To employ the contraction mapping principle to \eqref{abstract}, we then investigate the dependence and contraction of $ (\bK, G, \bH^1, \bH^2, F^1, F^2, F^3, F^4, F^5) $ on $ (\hv, \hpi, \hc, \hcss, \hg) $. To this end, we define
\begin{equation*}
	\sM(w) := \left( \bK(w), G(w), \bH^1(w), \bH^2(w), F^1(w), F^2(w), F^3(w), F^4(w), F^5(w) \right)^\top,
\end{equation*}
where the elements are given by \eqref{Nonlinear}. Then it is still needed to show that $ \sM(w) : \YT \rightarrow \ZT $ is well-defined in terms of $ (\hv, \hpi, \hc, \hcss, \hg) \in \YT $ and to verify $ \sM(w) $ possesses the contraction property. 

\begin{proposition}
	\label{M}
	Let $ q > n $ and $ R > 0 $. Assume $ w = (\hv, \hpi, \hc, \hcss, \hg) \in \YT $ with $ \rv{\hg}_{t = 0} = 1 $ and $ \norm{w}_{\YT} \leq R $, then there exist a constant $ C = C(R) > 0 $, a finite time $ T_R > 0 $ depending on $ R $ and $ \delta > 0 $ such that for $ 0 < T < T_R $, $ \sM(w) : \YT \rightarrow \ZT $ is well-defined and bounded along with the estimates:
	\begin{equation}
		\label{Mw}
		\norm{\sM(w)}_{\ZT}
		\leq C(R) \TD \left( \norm{w}_{\YT} + 1 \right).
	\end{equation}
	Moreover, for $ w^1 = (\hv^1, \hpi^1, \hc^1, {{}\hcss}^1, \hg^1), w^2 = (\hv^2, \hpi^2, \hc^2, {{}\hcss}^2, \hg^2) \in \YT $ with $ w^1 \neq w^2 $, $ \rv{\hc^i}_{t = 0} = \co $, $ \rv{\hcss}_{t = 0} = 0 $, $ \rv{\hg^i}_{t = 0} = 1 $ and $ \norm{w^i}_{\YT} \leq R \ (i = 1, 2) $, there exist a constant $ C = C(R) > 0 $, a finite time $ T_R > 0 $ depending on $ R $ and $ \delta > 0 $ such that for $ 0 < T < T_R $,
	\begin{equation}
		\label{MMw}
		\norm{\sM(w^1) - \sM(w^2)}_{\ZT} \leq C(R) \TD \norm{w^1 - w^2}_{\YT}.
	\end{equation}
\end{proposition}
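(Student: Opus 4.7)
Both estimates \eqref{Mw} and \eqref{MMw} are proved by a term-by-term analysis of the list \eqref{Nonlinear} defining $\sM(w)$. The guiding observation is that every summand of each entry of $\sM(w)$ can be written as a product of an unknown quantity in $\YT$ against a ``small'' factor of one of the forms $\hF-\bbI$, $\inv{\hF}-\bbI$, $\invtr{\hF}-\bbI$, or $\hg-1$. By Lemma \ref{F} the first three are bounded by $C(R)\TQ \norm{\hv}_{\YT^1}$ in $\Lq{\infty}(0,T;\W{1}(\tO))^{n\times n}$ and by $C(R)T^{\frac{1}{q}+\frac{1}{2q'}}\norm{\hv}_{\YT^1}$ in the time-fractional norm $\W{\onehalf(1-\frac{1}{q})}(0,T;\W{1}(\tO))^{n\times n}$ that governs traces on $\Gamma$ and $\Gs$; while $\hg-1=\int_0^t \pt\hg\,\d\tau$ is small in $\Lq{\infty}(0,T;\W{1}(\Os))$ because $\rv{\hg}_{t=0}=1$ and $\hg\in\YT^4$. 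Multiplying a small factor by a quantity of full regularity preserves $\ZT$-regularity through Lemma \ref{algebra} and the embedding $\W{1}(\tO)\hookrightarrow\Lq{\infty}(\tO)$, valid for $q>n$.

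The bulk terms $\bK_f,\bar{\bK}_s,F_f^1,\bar{F}_s^1$ sit in $\ZT^1$ or $\ZT^5$. Each is the divergence of a product $\kf,\ks,\tFf,\tFs$, and a direct application of Lemma \ref{Fw}(1) yields a factor of $\TQ$ times an expression bounded by $C(R)(\norm{w}_{\YT}+1)$. The growth correction $\bK_s^g$ involves $n\hnab\hg/\hg$; the denominator stays in a neighborhood of $1$ for small $T$ by the same Neumann series argument used in the proof of Lemma \ref{F}, and $\hnab\hg$ is controlled via \eqref{estimate: foam cells and growth}. The source $F_s^g$ is a product of $\hcs$ with itself and with $n\hnab\hg/\hg$, controlled by the multiplication property together with the smallness of $\hnab\hg$. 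The ordinary differential equation sources $F^4=-\frac{\gamma\beta}{\hrs}\hcs\hcss$ and $F^5=-\frac{\gamma\beta}{n\hrs}\hcs(\hg-1)$, placed in $\Lq{q}(0,T;\W{1}(\Os))$, are controlled by multiplication combined with $\rv{\hcss}_{t=0}=0$ and $\rv{\hg}_{t=0}=1$, which make $\hcss$ and $\hg-1$ small in $\Lq{\infty}(0,T;\W{1}(\Os))$.

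The $G$-term belongs to $\ZT^2$, which, besides the bulk regularities $\Lq{q}(0,T;\W{1}(\tO))$ and $\W{1}(0,T;\W{-1}(\hO))$, demands trace regularity $\W{1-\frac{1}{q},\onehalf(1-\frac{1}{q})}$ on $\Gamma$ and $\Gs$. Writing $G=-(\FOinvtran):\hnab\hv$ in $\Of$ (resp.\ using \eqref{G : form} in $\Os$) and combining Lemma \ref{Fw}(1) with the standard trace inequality reduces the bulk $\Lq{q}(0,T;\W{1})$-part to an $\Lq{q}(0,T;\W{1-\frac{1}{q}}(\Gamma))$-bound with a $\TQ$ prefactor; the time-fractional part follows from Lemma \ref{F}(4) paired with the embedding $\W{1}(\tO)\hookrightarrow\Lq{\infty}(\tO)$. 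The $\W{1}(\W{-1})$-part uses the divergence form \eqref{G : form} after distributing $\pt$ via \eqref{DtA}. The genuine boundary terms $\bH^1,\bH^2,\bar{F}_f^2,\bar{F}_s^2,F^3$ are products of $\hpi$, $\hnab\hv$ or $\hnab\hc$ with $\inv{\hF}-\bbI$ or $\invtr{\hF}-\bbI$, and are treated the same way.

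The contraction bound \eqref{MMw} is obtained by redoing each of the above estimates with every small factor replaced by the corresponding difference across $w^1$ and $w^2$ via Lemma \ref{F}(5)-(7) and Lemma \ref{Fw}(2)-(3). Since the initial data $\vo,\co,0,1$ are shared, the differences $\hF(\hv^1)-\hF(\hv^2)$, $\inv{\hF}(\hv^1)-\inv{\hF}(\hv^2)$, and $\hg^1-\hg^2$ all vanish at $t=0$ and inherit the same $\TD$-decay as in the absolute bounds. The main obstacle is the control of the pressure-bearing boundary terms $\jump{\hpi}$ and $\rv{\hpi}_{\Gs}$ in $\W{1-\frac{1}{q},\onehalf(1-\frac{1}{q})}$: $\hpi$ itself carries no $T$-smallness and only the limited time regularity built into $\YT^2$, so the whole $\TD$-factor must come from the companion $\invtr{\hF}-\bbI$ through the sharp estimate Lemma \ref{F}(4). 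Matching all exponents so that a single $\delta>0$ works uniformly across the nine components is the bookkeeping challenge that determines the final $\TD$ rate.
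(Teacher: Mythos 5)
Your proposal is correct and takes essentially the same route as the paper: term-by-term estimates of the nonlinearities \eqref{Nonlinear} using Lemmas \ref{algebra}, \ref{F}, \ref{Fw} (and \ref{gradu}), the smallness of $\hF-\bbI$, $\inv{\hF}-\bbI$, $\hg-1$, $\hcss$ forced by the initial conditions, the lower bound $\hg\geq\onehalf$ for small $T$, and the three-part control of $G$ in $\ZT^2$ (bulk part, $\W{1}(0,T;\W{-1}(\hO))$ part via \eqref{G : form} and \eqref{DtA}, trace parts via the difference-quotient splitting). The only differences are cosmetic: the paper proves the Lipschitz bound \eqref{MMw} first and deduces \eqref{Mw} from $\sM(0,0,0,0,1)=0$, and your claim that every summand carries a small factor slightly overstates (the $-\beta\hcs(1+\frac{\gamma}{\hrs}\hcs)$ part of $F_s^g$ gains its $T$-power merely from H\"older in time), which changes nothing substantive.
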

\begin{proof}
	Firstly, we prove the second part. To this end, for $ \norm{w^i}_{\YT} \leq R $, $ i = 1, 2 $ we estimate the following terms respectively
	\begin{gather*}
		\norm{\bK(w^1) - \bK(w^2)}_{\ZT^1}, \quad
		\norm{G(w^1) - G(w^2)}_{\ZT^2}, \\
		\norm{\bH^j(w^1) - \bH^j(w^2)}_{\ZT^{j+2}}, \quad
		\norm{F^k(w^1) - F^k(w^2)}_{\ZT^{k+4}},\quad
		\norm{F^5(w^1) - F^5(w^2)}_{\ZT^{8}},
	\end{gather*} 
	where $ j \in \{ 1,2 \} $, $ k \in \{ 1,2,3,4 \} $. If $ 0 < T \leq 1 $, we have $ T^s < T^{s'} $ for $  s > s' > 0 $. In the sequel, we set a universal constant $ \delta = \min\{ \frac{1}{2q'}, {\frac{1}{q} - \frac{1}{r}} \} $, where $ q' = \frac{q}{q - 1} $, $ r = \frac{q^2}{n} $.
	
	\textbf{Estimate of $ \norm{\bK(w^1) - \bK(w^2)}_{\ZT^1} $.}
	For $ \bK_f = \hdiv \kf $ from \eqref{Nonlinear}, with the help of Lemma \ref{algebra}, \ref{F} and \ref{Fw}, we derive that
	\begin{align*}
		& \norm{\kf(w^1) - \kf(w^2)}_{\Lq{q} \left( 0,T; \W{1}(\Of) \right)^{n \times n}} \\
		& \leq \norm{\hpif^1 \left( \invtr{\hFf}(\hvf^1) - \invtr{\hFf}(\hvf^2) \right)
			+ \left( \hpif^1 - \hpif^2 \right) \left( \invtr{\hFf}(\hvf^2) - \bbI \right)}_{\Lq{q} \left( 0,T; \W{1}(\Of) \right)^{n \times n}} \\
		& \quad + \nuf \norm{\left( \inv{\hFf}(\hvf^1) \hnab \hvf^1 + \tran{\hnab} \hvf^1 \invtr{\hFf}(\hvf^1) \right) 
		\left( \invtr{\hFf}(\hvf^1) - \invtr{\hFf}(\hvf^2) \right)}_{\Lq{q} \left( 0,T; \W{1}(\Of) \right)^{n \times n}} \\
		& \quad + 2 \nuf \norm{\left( \left( \inv{\hFf}(\hvf^1) - \inv{\hFf}(\hvf^2) \right) \hnab \hvf^1 + \inv{\hFf}(\hvf^2) \left( \hnab \hvf^1 - \hnab \hvf^2 \right) \right) \right. \\
			& \left. \qquad \qquad \qquad \qquad \times \left( \invtr{\hFf}(\hvf^2) - \bbI \right)}_{\Lq{q} \left( 0,T; \W{1}(\Of) \right)^{n \times n}} \\
		& \quad + 2 \nuf \norm{\left( \inv{\hFf}(\hvf^1) - \inv{\hFf}(\hvf^2) \hnab \hvf^1 \right)
			\left( \inv{\hFf}(\hvf^2) - \bbI \right) \left( \hnab \hvf^1 - \hnab \hvf^2 \right)}_{\Lq{q} \left( 0,T; \W{1}(\Of) \right)^{n \times n}} \\
		& \leq C \TQ \left( \norm{\hpif^1}_{\YT^2} \norm{\hvf^1 - \hvf^2}_{\YT^1} 
		+ \norm{\hpif^1 - \hpif^2}_{\YT^2} \norm{\hvf^2}_{\YT^1} \right) 
		+ C \TQ \norm{\hvf^1}_{\YT^1} \norm{\hvf^1 - \hvf^2}_{\YT^1} \\
		& \quad + C T^{\frac{2}{q'}} \norm{\hvf^1}_{\YT^1} \norm{\hvf^1 - \hvf^2}_{\YT^1} \norm{\hvf^2}_{\YT^1} + C \TQ \norm{\hvf^1 - \hvf^2}_{\YT^1} \norm{\hvf^2}_{\YT^1} \\
		& \quad + C \TQ \left( \norm{\hvf^1}_{\YT^1} \norm{\hvf^1 - \hvf^2}_{\YT^1} + \norm{\hvf^1 - \hvf^2}_{\YT^1} \norm{\hvf^2}_{\YT^1} \right) 
		\leq C(R) \TD \norm{w^1 - w^2}_{\YT}. 
	\end{align*}
	\\ Let $ \hg \in \W{1}(0,T; \W{1}(\Os)) $ with $ \rv{\hg}_{t = 0} = 1 $. Now we claim that there exists a time $ T_R > 0 $ such that for $ 0 < T < T_R $, $ \hg \geq \onehalf > 0 $. Let $ \hg $ be such function with $ \norm{\hg}_{\W{1}(0,T; \W{1}(\Os))} \leq R $ for some $ R > 0 $. Then for $ 0 < t < T $,
	\begin{equation*}
		\norm{g(t) - 1}_{\Lq{\infty}(\Os)}
		\leq C \norm{g(t) - g(0)}_{\W{1}(\Os)}
		\leq C T^{\frac{1}{q'}} R
		\leq \onehalf,
	\end{equation*}
	where we choose $ T_R > 0 $ small enough such that $ T_R \leq \frac{1}{2 C R} $.
	Hence,
	\begin{equation*}
		\hg \geq \onehalf > 0.
	\end{equation*}
	For $ \bK_s = \hdiv \ks + \bar{\bK}_s^g $, the first part can be estimated similarly that
	\begin{align*}
		\norm{\ks(w^1) - \ks(w^2)}_{\Lq{q} \left( 0,T; \W{1}(\Os) \right)^{n \times n}}
		\leq C(R) \TD \norm{w^1 - w^2}_{\YT}.
	\end{align*}
	The second part follows from \eqref{LI}, Lemma \ref{F} and \ref{Fw} that
	\begin{align*}
		& \norm{\bar{\bK}_s^g(w^1) - \bar{\bK}_s^g(w^2)}_{\Lq{q} \left( 0,T; \Lq{q}(\Os) \right)^{n \times n}} \\
		& \leq \norm{\left( \hsigs(\hvs^1, \hpis^1, \hg^1) \invtr{\hFs}(\hvs^1) - \hsigs(\hvs^2, \hpis^2, \hg^2) \invtr{\hFs}(\hvs^2) \right) \frac{n \hnab \hg^1}{\hg^1}}_{\Lq{q} \left( 0,T; \Lq{q}(\Os) \right)^{n \times n}} \\
		& \qquad + \norm{\hsigs(\hvs^2, \hpis^2, \hg^2) \invtr{\hFs}(\hvs^2) \left( \frac{n \hnab \hg^1}{\hg^1} - \frac{n \hnab \hg^2}{\hg^2} \right)}_{\Lq{q} \left( 0,T; \Lq{q}(\Os) \right)^{n \times n}} 
		=: N_1 + N_2.
	\end{align*}
	From the definition of $ \hsigs $ and $ \hg \geq 1/2 $,
	\begin{align*}
		N_1 \leq C \norm{\hnab \hg^1}_{\Lq{\infty} \left( 0,T; \Lq{q}(\Os) \right)^n} N_1^1 \leq C(R) \TQ N_1^1 ,
	\end{align*}
	where
	\begin{align*}
		N_1^1 & := \norm{\hpis^1 \left( \invtr{\hFs}(\hvs^1) - \invtr{\hFs}(\hvs^2) \right)}_{\Lq{q} \left( 0,T; \Lq{\infty}(\Os) \right)^{n \times n}} \\
		& \ 
		+ \norm{\left( \hpis^1 - \hpis^2 \right) \invtr{\hFs}(\hvs^2)}_{\Lq{q} \left( 0,T; \Lq{\infty}(\Os) \right)^{n \times n}} 
		+ \hnus \norm{\hnab \hvs^1 - \hnab \hvs^2}_{\Lq{q} \left( 0,T; \Lq{\infty}(\Os) \right)^{n \times n}} \\
		& \quad + \hmus \left( \norm{\frac{1}{(\hg^1)^2} \left( \hFs(\hvs^1) - \hFs(\hvs^2) \right)}_{\Lq{q} \left( 0,T; \Lq{\infty}(\Os) \right)^{n \times n}} \right. \\
		& \left. \qquad \qquad \qquad 
		+ \norm{\left(\frac{1}{(\hg^1)^2} - \frac{1}{(\hg^2)^2} \right)\hFs(\hvs^2)}_{\Lq{q} \left( 0,T; \Lq{\infty}(\Os) \right)^{n \times n}} \right. \\
		& \left. \qquad \qquad \qquad \qquad \qquad
		+ \norm{\invtr{\hFs}(\hvs^1) - \invtr{\hFs}(\hvs^2)}_{\Lq{q} \left( 0,T; \Lq{\infty}(\Os) \right)^{n \times n}} \right) \\
		& \leq C \TQ \norm{\hpis^1}_{\YT^2} \norm{\hvs^1 - \hvs^2}_{\YT^1} 
		+ C \norm{\hpis^1 - \hpis^2}_{\YT^2} 
		+ C \norm{\hvs^1 - \hvs^2}_{\YT^1} \\
		& \quad + \hmus \left( C \TQ \norm{\hvs^1 - \hvs^2}_{\YT^1}
		+ C \TQ \norm{\hg^1 - \hg^2}_{\YT^4} \norm{\hvs^2}_{\YT^1} \left(\norm{\hg^1}_{\YT^4} + \norm{\hg^2}_{\YT^4}\right) \right. \\
		& \left. \qquad + C \TQ \norm{\hvs^1 - \hvs^2}_{\YT^1} \right) 
		\leq C(R) \norm{w^1 - w^2}_{\YT}.
	\end{align*}
	Then we get
	\begin{equation*}
		N_1 + N_2 \leq C(R) \TQ \norm{w^1 - w^2}_{\YT}.
	\end{equation*}
	Consequently,
	\begin{equation}
		\label{Kw2}
		\norm{\bK(w^1) - \bK(w^2)}_{\ZT^1} 
		\leq C(R) \TD \norm{w^1 - w^2}_{\YT}.
	\end{equation}
	
	\textbf{Estimate of $ \norm{G(w^1) - G(w^2)}_{\ZT^2} $.}
	From the definition of $ \ZT^2 $ given by \eqref{ZT2}, we need to verify that $ G(w^1) - G(w^2) $ is contained both in $ \Lq{q}( 0,T; \W{1}(\tO) ) $ and $ \W{1}( 0,T; \W{-1}(\hO) ) $, as well as the trace regularity 
	\begin{gather*}
		\tr_{\Gamma}( G(w^1) - G(w^2) ) \in \W{1 - \frac{1}{q},\onehalf(1 - \frac{1}{q})}(\Gamma \times (0,T)), \\
		\tr_{\Gs}( G(w^1) - G(w^2) ) \in \W{1 - \frac{1}{q},\onehalf(1 - \frac{1}{q})}(\Gs \times (0,T)).
	\end{gather*}
	
	The first regularity follows easily from \eqref{Nonlinear}, Lemma \ref{F} and \ref{Fw} that
	\begin{align*}
		& \norm{G(w^1) - G(w^2)}_{\Lq{q}\left( 0,T; \W{1}(\tO) \right)} \\
%		& \quad = \norm{\left( \invtr{\hF}(\hv^1) - \invtr{\hF}(\hv^2) \right) : \hnab \hv^1}_{\Lq{q}\left( 0,T; \W{1}(\tO) \right)} \\
%		& \qquad \qquad + \norm{\left( \invtr{\hF}(\hv^2) - \bbI \right): \left( \hnab \hv^1 - \hnab \hv^2 \right)}_{\Lq{q}\left( 0,T; \W{1}(\tO) \right)} \\
		& \quad \leq C \TQ \norm{\hv^1}_{\YT^1} \norm{\hv^1 - \hv^2}_{\YT^1} + C \TQ \norm{\hv^2}_{\YT^1} \norm{\hv^1 - \hv^2}_{\YT^1} 
		\leq C \TD R \norm{w^1 - w^2}_{\YT}.
	\end{align*}
	From approximation argument in \cite[Page 15]{AM2018}, we know that weak derivative with respect to time does exist for $ G $. Hence, substituting $ G $ by the form \eqref{G : form}, using integration by parts, we have
%	\begin{align*}
%		& \inner{\pt G_f(\cdot, t)}{\phi}_{\W{-1} \times W_{q',0}^1} 
%		= \frac{\d}{\d t} \inner{G_f(\cdot, t)}{\phi}_{\W{-1} \times W_{q',0}^1} \\
%		& \qquad \quad = \frac{\d}{\d t}
%		\inner{\left( \FfOinv \right) \hvf}{\hnab \phi}_{\Lq{q} \times \Lq{q'}_0} 
%		= \int_{\hO} \left( \left( \pt \inv{\hFf} \right) \hvf + \left( \FfOinv \right) \pt \hvf \right) \cdot \hnab \phi,
%	\end{align*}
%	and
	\begin{align*}
		& \inner{\pt G(\cdot, t)}{\phi}_{\W{-1} \times W_{q',0}^1} 
		= \frac{\d}{\d t} \inner{G(\cdot, t)}{\phi}_{\W{-1} \times W_{q',0}^1} \\
		& \qquad \quad = \frac{\d}{\d t} \left( 
		\inner{\left( \FOinv \right) \hv}{\hnab \phi}_{\Lq{q} \times \Lq{q'}}
		- \inner{\hvs \cdot \hdiv \invtr{\hFs}}{\phi}_{\Lq{q} \times \Lq{q'}}
		\right) \\
		& \qquad \qquad \qquad = \int_{\hO} \left( \left( \pt \inv{\hF} \right) \hv + \left( \FOinv \right) \pt \hv \right) \cdot \hnab \phi \d X \\
		& \qquad \qquad \qquad \qquad \quad + \int_{\Os} \left( \pt \hvs \cdot \hdiv \invtr{\hFs} + \hvs \cdot \hdiv \pt \invtr{\hFs} \right) \cdot \phi \d X \\
		& \qquad \qquad \qquad = 
		\int_{\Of} \left( \pt \inv{\hFf} \right) \hvf \cdot \hnab \phi \d X
		+ \int_{\hO} \left( \left( \FOinv \right) \pt \hv \right) \cdot \hnab \phi \d X \\
		& \qquad \qquad \qquad \qquad \quad + \int_{\Os} \left( \pt \hvs \cdot \hdiv \invtr{\hFs} + \pt \invtr{\hFs} : \hnab \hvs \right) \cdot \phi \d X,
	\end{align*}
	for every $ \phi \in W_{q',0}^1(\hO) $, where $ \inner{\cdot}{\cdot}_{X \times X'} $ denotes the duality product between a pair of dual space $ X $ and $ X' $.
	Then according to \eqref{DtA}, the Sobolev embedding $ \W{1}(\hO) \hookrightarrow C^{0,1 - n/q}(\hO) \hookrightarrow \Lq{\infty}(\hO) $ and Lemma \ref{F}, one obtains
	\begin{align*}
		& \norm{\pt G(w^1) - \pt G(w^2)}_{\Lq{q}\left( 0,T; \W{-1}(\hO) \right)} \\
		& \leq \norm{\left( \pt \inv{\hFf}(\hv^1) - \pt \inv{\hFf}(\hvf^2) \right) \hvf^1 
			+ \pt \inv{\hFf}(\hvf^2) \left( \hvf^1 - \hvf^2 \right)}_{\Lq{q}\left( 0,T; \Lq{q}(\hO) \right)^n} \\
		& \quad + \norm{\left( \inv{\hF}(\hv^1) - \inv{\hF}(\hv^2) \right) \pt \hv^1
			+ \left( \inv{\hF}(\hv^2) - \bbI \right) \left( \pt \hv^1 - \pt \hv^2 \right)}_{\Lq{q}\left( 0,T; \Lq{q}(\hO) \right)^n} \\
		& \quad + \norm{\pt \hvs^1 \cdot \left( 
			\hdiv \invtr{\hFs}(\hv^1) - \hdiv \invtr{\hFs}(\hvs^2) \right)}_{\Lq{q}\left( 0,T; \Lq{q}(\Os) \right)} \\
		& \qquad \qquad \qquad + \norm{\left( \pt \hvs^1 - \pt \hvs^2 \right) \cdot \hdiv \invtr{\hFs}(\hvs^2)}_{\Lq{q}\left( 0,T; \Lq{q}(\Os) \right)} \\
		& \quad + \norm{\left( \pt \invtr{\hFs}(\hvs^1) - \pt \invtr{\hFs}(\hvs^2) \right) : \hnab \hvs^1}_{\Lq{q}\left( 0,T; \Lq{q}(\Os) \right)} \\
			& \qquad \qquad \qquad + \norm{\pt \invtr{\hFs}(\hvs^2) : \left( \hnab \hvs^1 - \hnab \hvs^2 \right)}_{\Lq{q}\left( 0,T; \Lq{q}(\Os) \right)} \\
		& \leq C T^{\frac{1}{q} - \frac{1}{r}} \norm{\hv^1 - \hv^2}_{\YT^1} \left( 1 + \TQ \norm{\hv^1}_{\YT^1} \right)
		+ C \TQ \norm{\hv^2}_{\YT^1} \norm{\hv^1 - \hv^2}_{\YT^1} \\
		& \quad + C \TQ \norm{\hv^1 - \hv^2}_{\YT^1} \norm{\hv^1}_{\YT^1}
		+ C \TQ \norm{\hv^2}_{\YT^1} \norm{\hv^1 - \hv^2}_{\YT^1} \\
		& \quad + C \TQ \norm{\hv^1}_{\YT^1} \norm{\hv^1 - \hv^2}_{\YT^1} 
		+ C \TQ \norm{\hv^1 - \hv^2}_{\YT^1} \norm{\hv^2}_{\YT^1} \\
		& \quad + C T^{\frac{1}{q} - \frac{1}{r}} \norm{\hv^1 - \hv^2}_{\YT^1} \norm{\hv^1}_{\YT^1} 
		+ C \TQ \norm{\hv^2}_{\YT^1} \norm{\hv^1 - \hv^2}_{\YT^1} \\
		& \leq C(R) \TD \norm{w^1 - w^2}_{\YT}.
	\end{align*}
	
	Then we are in the position to prove $ \tr_{\Gamma}( G(w^1) - G(w^2) ) \in \W{1 - 1/q, (1 - 1/q)/2}(\Gamma \times (0,T)) $. Recalling the definition of such mixed space \eqref{WSR}, we first write the explicit norm.
	\begin{align*}
		& \norm{\tr_{\Gamma}\left( G(w^1) - G(w^2) \right)}_{\W{1 - \frac{1}{q},\onehalf\left(1 - \frac{1}{q}\right)}(\Gamma \times (0,T))} \\
		& \quad = \norm{\left( \invtr{\hF}(\hv^1) - \invtr{\hF}(\hv^2) \right) : \hnab \hv^1}_{\Lq{q} \big( 0,T; \W{1-\frac{1}{q}}(\Gamma) \big)} \\
		& \qquad \qquad + \norm{\left( \invtr{\hF}(\hv^2) - \bbI \right): \left( \hnab \hv^1 - \hnab \hv^2 \right)}_{\Lq{q} \big( 0,T; \W{1-\frac{1}{q}}(\Gamma) \big)} \\
		& \qquad + \norm{\left( \invtr{\hF}(\hv^1) - \invtr{\hF}(\hv^2) \right) : \hnab \hv^1}_{ \W{\onehalf \left( 1-\frac{1}{q} \right)} \left( 0,T; \Lq{q} (\Gamma) \right)} \\
		& \qquad \qquad + \norm{\left( \invtr{\hF}(\hv^2) - \bbI \right): \left( \hnab \hv^1 - \hnab \hv^2 \right)}_{ \W{\onehalf \left( 1-\frac{1}{q} \right)} \left( 0,T; \Lq{q} (\Gamma) \right)} 
		=: \sum_{i = 1}^4 I_i.
	\end{align*}
	According to the trace theorem from $ \W{1}(\tO) $ into $ \W{1-\frac{1}{q}}(\Gamma) $, Lemma \ref{algebra}, \ref{F} and \ref{Fw}, 
	\begin{align*}
		& I_1 
		\leq C \norm{\left( \invtr{\hF}(\hv^1) - \invtr{\hF}(\hv^2) \right) : \hnab \hv^1}_{\Lq{q} \left( 0,T; \W{1}(\tO) \right)} 
		\leq C(R) \TD \norm{w^1 - w^2}_{\YT}, \\
		& I_2
		\leq C \norm{\left( \invtr{\hF}(\hv^2) - \bbI \right): \left( \hnab \hv^1 - \hnab \hv^2 \right)}_{\Lq{q} \left( 0,T; \W{1}(\tO) \right)} 
		\leq C(R) \TD \norm{w^1 - w^2}_{\YT}.
	\end{align*}
	It follows from the definition of vector valued Sobolev-Slobodeckij space, Lemma \ref{F} and \ref{gradu} that
	\begin{align*}
		I_3 
		& \leq \left( \int_{0}^{T} \int_{0}^{t} \frac{\norm{ \Dh \left( \invtr{\hF}(\hv^1) - \invtr{\hF}(\hv^2) \right)(t) : \hnab \hv^1(t-h) }_{\Lq{q}(\Gamma)}^q}{h^{1 + \frac{q}{2}\left( 1 - \frac{1}{q} \right)}} \d h \d t\right)^{\frac{1}{q}} \\
		& \qquad + \left( \int_{0}^{T} \int_{0}^{t} \frac{\norm{ \left( \invtr{\hF}(\hv^1) - \invtr{\hF}(\hv^2) \right)(t) : \Dh \left( \hnab \hv^1 \right)(t) }_{\Lq{q}(\Gamma)}^q}{h^{1 + \frac{q}{2}\left( 1 - \frac{1}{q} \right)}} \d h \d t\right)^{\frac{1}{q}} \\
		& \leq \sup_{0 \leq t \leq T} \left( \int_{0}^{t} \frac{\norm{ \Dh \left( \invtr{\hF}(\hv^1) - \invtr{\hF}(\hv^2) \right) }_{\Lq{\infty}(\Gamma)^{n \times n}}^q}{h^{1 + \frac{q}{2}\left( 1 - \frac{1}{q} \right)}} \d h \right)^\frac{1}{q} \norm{\hnab \hv^1}_{\Lq{q}(0,T; \Lq{q}(\Gamma))^{n \times n}} \\
		& \qquad + \sup_{0 \leq t \leq T} \norm{ \invtr{\hF}(\hv^1) - \invtr{\hF}(\hv^2) }_{\W{1}(\tO)^{n \times n}} \seminorm{\hnab \hv^1}_{\W{\onehalf \left( 1-\frac{1}{q} \right)} \left( 0,T; \Lq{q} (\Gamma) \right)^{n \times n}} \\
		& \leq C \left( \TQQ \norm{\hv^1}_{\YT^1} \norm{\hv^1 - \hv^2}_{\YT^1} + \TQ \norm{\hv^1 - \hv^2}_{\YT^1} \norm{\hv^1}_{\YT^1} \right) \\
		& \leq C(R) \TD \norm{w^1 - w^2}_{\YT},
	\end{align*}
	where we used the property of $ \Dh $ that $ \Dh(fg)(t) = \Dh f(t) g(t-h) + f(t) \Dh g(t) $. Similarly,
	\begin{align*}
		I_4 \leq C(R) \TD \norm{w^1 - w^2}_{\YT},
	\end{align*}
	Collecting $ I_i $, $ i = 1,\dots, 4 $, we get
	\begin{equation*}
		\norm{\tr_{\Gamma}\left( G(w^1) - G(w^2) \right)}_{\W{1 - \frac{1}{q},\onehalf\left(1 - \frac{1}{q}\right)}(\Gamma \times (0,T))}
		\leq C(R) \TD \norm{w^1 - w^2}_{\YT}.
	\end{equation*}
	Since the trace regularities for $ G $ on $ \Gamma $ and $ \Gs $ are same, one also obtains
	\begin{equation*}
		\norm{\tr_{\Gs}\left( G(w^1) - G(w^2) \right)}_{\W{1 - \frac{1}{q},\onehalf\left(1 - \frac{1}{q}\right)}(\Gs \times (0,T))}
		\leq C(R) \TD \norm{w^1 - w^2}_{\YT}.
	\end{equation*}
	Then 
	\begin{equation*}
		\norm{G(w^1) - G(w^2)}_{\ZT^2}
		\leq C(R) \TD \norm{w^1 - w^2}_{\YT},
	\end{equation*}
	
	\textbf{Estimate of $ \norm{\bH^1(w^1) - \bH^1(w^2)}_{\ZT^3} $, $ \norm{\bH^2(w^1) - \bH^2(w^2)}_{\ZT^4} $.}
	Since $ \Gamma $ is of class $ C^3 $, $ \hng \in C^2 (\partial \Of) $. Then by similar estimates as $ \tr_{\Gamma}( G(w^1) - G(w^2) ) $, the norm of $ \bH^1(w^1) - \bH^1(w^2) $ in $ \ZT^3 $ reads
	\begin{align*}
		& \norm{\bH^1(w^1) - \bH^1(w^2)}_{\ZT^3} = \norm{\jump{\tk(w^1) - \tk(w^2)} \hng}_{ \W{\onehalf \left( 1-\frac{1}{q} \right)} \left( 0,T; \Lq{q} (\Gamma) \right)^n} \\
		& \qquad\qquad\qquad\qquad\qquad\qquad + \norm{\jump{\tk(w^1) - \tk(w^2)} \hng}_{\Lq{q} ( 0,T; \W{1-\frac{1}{q}}(\Gamma) )^n} \\
		& \qquad \leq C \norm{\kf(w^1) - \kf(w^2)
			+ \ks(w^1) - \ks(w^2)}_{\W{\onehalf \left( 1-\frac{1}{q} \right)} \left( 0,T; \Lq{q} (\Gamma) \right)^{n \times n}} \\
		& \qquad \qquad + C \norm{\left( \tk(w^1) - \tk(w^2) \right)}_{\Lq{q}\left( 0,T; \W{1}(\tO) \right)^{n \times n}} 
		\leq  C(R) \TD \norm{w^1 - w^2}_{\YT}.
	\end{align*}
	As the similar situation, we can easily derive
	\begin{equation*}
		\norm{\bH^2(w^1) - \bH^2(w^2)}_{\ZT^4}
		\leq C \TD \left( 1 + R \right)^2 \norm{w^1 - w^2}_{\YT}.
	\end{equation*}
	
	\textbf{Estimate of $ \norm{F^1(w^1) - F^1(w^2)}_{\ZT^5} $.}
	For $ F_f^1 = \hdiv \tFf $, we have
	\begin{align*}
		& \norm{F_f^1(w^1) - F_f^1(w^2)}_{\ZT^5} \\
		& \quad \leq \norm{\tFf(w^1) - \tFf(w^2)}_{\Lq{q}( 0,T; \W{1}(\Of))^n} \\
		& \quad \leq \hDf \norm{\left( \inv{\hFf}(\hvfone) \invtr{\hFf}(\hvfone) - \inv{\hFf}(\hvftwo) \invtr{\hFf}(\hvftwo) \right) \hnab \hcf^1}_{\Lq{q}( 0,T; \W{1}(\Of))^n} \\
		& \quad \qquad + \hDf \norm{\left( \inv{\hFf}(\hvftwo) \invtr{\hFf}(\hvftwo) - \bbI \right) \left( \hnab \hcf^1 - \hnab \hcf^2 \right)}_{\Lq{q}( 0,T; \W{1}(\Of))^n} =: \mathfrak{F}_1 + \mathfrak{F}_2.
	\end{align*}
	Lemma \ref{F} and multiplication of $ \W{1}(\Of) $ in Lemma \ref{algebra} imply that
	\begin{align*}
		\mathfrak{F}_1 & \leq C \left( 
			\norm{\inv{\hFf}(\hvfone) \left( \invtr{\hFf}(\hvfone) - \invtr{\hFf}(\hvftwo) \right)}_{\Lq{\infty}( 0,T; \W{1}(\Of))^{n \times n}} \right. \\
			& \left. \qquad + \norm{\left( \inv{\hFf}(\hvfone) - \inv{\hFf}(\hvftwo) \right) \invtr{\hFf}(\hvftwo)}_{\Lq{\infty}( 0,T; \W{1}(\Of))^{n \times n}}
		\right) \norm{\hnab \hcf^1}_{\Lq{q}( 0,T; \W{1}(\Of))^n} \\
		& \leq C(R) \TQ \norm{w^1 - w^2}_{\YT},
	\end{align*}
	and
	\begin{align*}
		\mathfrak{F}_2 & \leq C \left( 
			\norm{\inv{\hFf}(\hvftwo) \left( \invtr{\hFf}(\hvftwo) - \bbI \right)}_{\Lq{\infty}( 0,T; \W{1}(\Of))^{n \times n}} \right. \\
			& \left. \qquad + \norm{\inv{\hFf}(\hvftwo) - \bbI}_{\Lq{\infty}( 0,T; \W{1}(\Of))^{n \times n}}
		\right) \norm{\hnab \hcf^1 - \hnab \hcf^2}_{\Lq{q}( 0,T; \W{1}(\Of))^n} \\
		& \leq C(R) \TQ \norm{w^1 - w^2}_{\YT}.
	\end{align*}
	Then
	\begin{equation*}
		\norm{F_f^1(w^1) - F_f^1(w^2)}_{\ZT^5}
		\leq C(R) \TQ \norm{w^1 - w^2}_{\YT}.
	\end{equation*}
	For $ F_s^1 = \bar{F}_s^1 + F_s^g = \hdiv \tFs + F_s^g $, it can be deduced similarly as $ F_f^1 $ that 
	\begin{align*}
		\norm{\bar{F}_s^1(w^1) - \bar{F}_s^1(w^2)}_{\ZT^5}
		\leq C(R) \TQ \norm{w^1 - w^2}_{\YT}.
	\end{align*}
	Moreover,
	\begin{align*}
		& \norm{F_s^g(w^1) - F_s^g(w^2)}_{\ZT^5} \\
		& \leq \beta \norm{\left( \hcs^1 - \hcs^2 \right) \left( 1 + \frac{\gamma}{\hrs} \hcs^1 \right)}_{\Lq{q}(\Os \times (0,T))} 
		+ \beta \norm{\hcs^2 \left( \hcs^1 - \hcs^2 \right)}_{\Lq{q}(\Os \times (0,T))} \\
		& \quad + n \norm{\frac{\hnab \hg^1}{\hg^1} \left( \tFs(w^1) - \tFs(w^2) + \left( \hnab \hcs^1 - \hnab \hcs^2 \right)\right)}_{\Lq{q}(\Os \times (0,T))} \\
		& \quad + n \norm{\left( \frac{\hnab \hg^1}{\hg^1} - \frac{\hnab \hg^2}{\hg^2} \right) \inv{\hFs}(\hvs^2) \invtr{\hFs}(\hvs^2) \hnab \hcs^2}_{\Lq{q}(\Os \times (0,T))} 
		=: \sum_{i = 1}^4 \mathfrak{F}_i^g.
	\end{align*}
	Apparently, with $ \rv{\hc^i}_{t = 0} = \co $, $ i = 1, 2 $,
	\begin{align*}
		\mathfrak{F}_1^g + \mathfrak{F}_2^g
		& \leq C \norm{\hcs^1 - \hcs^2}_{\Lq{\infty}(0,T; \Lq{q}(\Os))} \norm{1 + \frac{\gamma}{\hrs} \hcs^1}_{\Lq{q}(0,T; \Lq{\infty}(\Os))} \\
		& \quad + C \norm{\hcs^1 - \hcs^2}_{\Lq{\infty}(0,T; \Lq{q}(\Os))} \norm{\hcs^2}_{\Lq{q}(0,T; \Lq{\infty}(\Os))} 
		\leq C(R) \TQ \norm{w^1 - w^2}_{\YT}.
	\end{align*}
	Proceeding the same estimate as $ \tFf $ above, we have
	\begin{align*}
		\mathfrak{F}_3^g + \mathfrak{F}_4^g
		\leq C(R) \TQ \norm{w^1 - w^2}_{\YT},
	\end{align*}
	by $ \hg \geq \onehalf $ and Lemma \ref{F}.
	Collecting $ \mathfrak{F}_i^g $, $ i = 1,...,4 $ together, one concludes
	\begin{equation*}
		\norm{F_s^1(w^1) - F_s^1(w^2)}_{\ZT^5}
		\leq C(R) \TQ \norm{w^1 - w^2}_{\YT},
	\end{equation*}
	which derives the desired regularity.
	
	\textbf{Estimate of $ \norm{F^2(w^1) - F^2(w^2)}_{\ZT^6} $, $ \norm{F^3(w^1) - F^3(w^2)}_{\ZT^7} $.}
	Since the key ingredient here is to estimate $ \tF(w^1) - \tF(w^2) $ in space $ \W{1 - \frac{1}{q}, \onehalf ( 1 - \frac{1}{q} )}(\Gamma \times (0,T)) $, we only give the details to handle this term. By definition,
	\begin{align*}
		& \norm{\tF(w^1) - \tF(w^2)}_{\W{1 - \frac{1}{q}, \onehalf ( 1 - \frac{1}{q} )}(\Gamma \times (0,T))^n} \\
		& = \norm{\tF(w^1) - \tF(w^2)}_{\Lq{q}(0,T; \W{1 - \frac{1}{q}}(\Gamma))^n}
		+ \norm{\tF(w^1) - \tF(w^2)}_{\W{\onehalf \left( 1 - \frac{1}{q} \right)}(0,T; \Lq{q}(\Gamma))^n}.
	\end{align*}
	The first term can be controlled easily by trace Theorem for $ q > n $ and the estimates of $ \tF $ in $ \tO $ above. Namely,
	\begin{align*}
		& \norm{\tF(w^1) - \tF(w^2)}_{\Lq{q}(0,T; \W{1 - \frac{1}{q}}(\Gamma))^n} \\
		& \qquad \qquad \leq C \norm{\tF(w^1) - \tF(w^2)}_{\Lq{q}(0,T; \W{1}(\tO))^n}
		\leq C(R) \TQ \norm{w^1 - w^2}_{\YT}.
	\end{align*}
	For the second term, again by the definition of vector-valued Sobolev-Slobodeckij space, we have
	\begin{align*}
		\norm{\tF(w^1) - \tF(w^2)}_{\W{\onehalf \left( 1 - \frac{1}{q} \right)}(0,T; \Lq{q}(\Gamma))^n} 
		\leq C(R) \TQQ \norm{w^1 - w^2}_{\YT},
	\end{align*}
	following the argument of estimating $ \tr_{\Gamma}( G(w^1) - G(w^2) ) $. Then,
	\begin{equation*}
		 \norm{F^2(w^1) - F^2(w^2)}_{\ZT^6} + \norm{F^3(w^1) - F^3(w^2)}_{\ZT^7}
		 \leq C(R) \TD \norm{w^1 - w^2}_{\YT}.
	\end{equation*}
	
	\textbf{Estimate of $ \norm{F^4(w^1) - F^4(w^2)}_{\ZT^8} $, $ \norm{F^5(w^1) - F^5(w^2)}_{\ZT^8} $.} Observing that the nonlinearities in $ F^4 $ and $ F^5 $ are $ \hcs \hcss $ and $ \hcs \hg $, which are all quadratic, we control them under the assumptions $ \rv{\hc^i}_{t = 0} = \co $, $ \rv{\hcss}_{t = 0} = 0 $, $ \rv{\hg^i}_{t = 0} = 1 $, $ i = 1, 2 $, and by 
	\begin{align*}
		& \norm{uv}_{\Lq{q}(0,T; \W{1}(\Os))}
		\leq M_q \norm{u}_{\Lq{\infty}(0,T; \W{1}(\Os))} \norm{v}_{\Lq{q}(0,T; \W{1}(\Os))},
	\end{align*}
	for $ u,v \in \W{1}(0,T; \W{1}(\Os)) $.
	Hence,
	\begin{align*}
		\norm{F^4(w^1) - F^4(w^2)}_{\ZT^8} + \norm{F^5(w^1) - F^5(w^2)}_{\ZT^8}
		\leq C(R) \TQ \norm{w^1 - w^2}_{\YT}.
	\end{align*}
	
	Consequently, we derive \eqref{MMw}. Now, let $ w^1 = w $ and $ w^2 = (0,0,0,0,1) $ in \eqref{MMw}, \eqref{Mw} follows immediately from the fact that $ \sM(0,0,0,0,1) = 0 $.
\end{proof}

\begin{proof}[\textbf{Proof of Theorem \ref{main}}]
	Since $ \sL: \YT \rightarrow \ZT \times \Dq $ is an isomorphism as showed in Proposition \ref{isomorphism}, from the estimates in Theorem \ref{twophase: theorem}, we set a well-defined constant 
	\begin{equation*}
		C_{\sL} := \sup_{0 \leq T \leq 1} \norm{\sL^{-1}}_{\cL(\ZT \times \Dq , \YT)}.
	\end{equation*}
	Choose $ R > 0 $ large such that $ R \geq 2 C_\sL \norm{(\vo, \co)}_{\Dq} $. Then 
	\begin{equation}
		\label{L0}
		\norm{\sL^{-1} \sN(0, \wo)}_{\YT} 
		\leq C_\sL \norm{(\vo, \co)}_{\Dq}
		\leq \frac{R}{2}.
	\end{equation}
	For $ \norm{w^i}_{\YT} \leq R $, $ i = 1, 2 $, we take $ T_R > 0 $ small enough such that 
	\begin{align*}
		C_\sL C(R) \TD_R \leq \onehalf,
	\end{align*}
	where $ C(R) $ is the constant in \eqref{MMw}. Then for $ 0 < T < T_R $, we infer from Theorem \ref{M} that
	\begin{equation}
		\begin{aligned}
			\label{L12}
			& \norm{\sL^{-1}\sN(w^1, w_0) - \sL^{-1}\sN(w^2, w_0)}_{\YT}  \\
			& \qquad \qquad \leq C_\sL C(R) \TD \norm{w^1 - w^2}_{\YT} 
			\leq \onehalf \norm{w^1 - w^2}_{\YT},
		\end{aligned}
	\end{equation}
	which implies the contraction property.
	From \eqref{L0} and \eqref{L12}, we have
	\begin{align*}
		& \norm{\sL^{-1}\sN(w, \wo)}_{\YT} \\
		& \qquad \qquad \leq \norm{\sL^{-1}\sN(0, \wo)}_{\YT} + \norm{\sL^{-1}\sN(w, \wo) - \sL^{-1}\sN(0, \wo)}_{\YT}
		\leq R.
	\end{align*}
	Define $ \cM_{R,T} $ by
	\begin{equation*}
		\cM_{R,T} := \left\{ w \in \overline{B_{\YT}(0,R)}: w = (\hv, \hpi, \hc, \hcs, \hg), \quad \rv{\hg}_{t = 0} = 1, \quad \rv{\hc}_{t = 0} = \co \right\},
	\end{equation*}
	a closed subset of $ \YT $. Hence, $ \sL^{-1}\sN : \cM_{R,T} \rightarrow \cM_{R,T} $ is well-defined for all $ 0 < T < T_R $ and a strict contraction. Since $ \YT $ is a Banach space, the Banach fixed-point Theorem implies the existence of a unique fixed-point of $ \sL^{-1}\sN $ in $ \cM_{R,T} $, i.e., \NLFSI\  admits a unique strong solution in $ \cM_{R,T} $ for small time $ 0 < T < T_R $.
	
	In the following, we prove the uniqueness of solutions in $ \YT $ by the continuity argument. Let $ w^1, w^2 \in \YT $ be two different solutions of \NLFSI\ and $ \widetilde{R} := \max\{ \norm{w^1}_{\YT}, \norm{w^2}_{\YT} \} $, then there is a time $ T_{\widetilde{R}} \leq T $ such that $ \sL^{-1}\sN : \cM_{\widetilde{R},T_{\widetilde{R}}} \rightarrow \cM_{\widetilde{R},T_{\widetilde{R}}} $ is a contraction and therefore $ \rv{w^1}_{[0,T_{\widetilde{R}}]} = \rv{w^2}_{[0,T_{\widetilde{R}}]} $. Now we argue by contradiction. Define $ \widetilde{T} $ as
	\begin{equation*}
		 \widetilde{T} := 
		 \sup \left\{
		 	T' \in (0,T]: \rv{w^1}_{[0,T']} = \rv{w^2}_{[0,T']}
		 \right\}.
	\end{equation*}
	Assume $ \widetilde{T} < T $. Since $ \rv{w^1}_{[0,\widetilde{T}]} = \rv{w^2}_{[0,\widetilde{T}]} $, we consider $ \rv{w^1}_{t = \widetilde{T}} = \rv{w^2}_{t = \widetilde{T}} $ as the initial value for \NLFSI. Repeating the argument above, we see that there is a time $ \widehat{T} \in (\widetilde{T}, T) $ such that $ \rv{w^1}_{[\widetilde{T},\widehat{T}]} = \rv{w^2}_{[\widetilde{T},\widehat{T}]} $, which contradicts to the definition of $ \widetilde{T} $.
	
	In conclusion, \NLFSI\ admits a unique solution in $ \YT $.
	
	For the nonnegativity of $ \hc $, we show it in Eulerian coordinate. Let $ U_T = (\OM^t \backslash \Gamma^t) \times (0,T) $, $ U_{f,T} = \Oft \times (0,T) $, $ U_{s,T} = \Ost \times (0,T) $, define the parabolic boundary $ \partial_P U_{f,T} := (\Bar{\OM}_f^0 \times \{0\}) \cup (\Gt \times [0,T]) $, $ \partial_P U_{s,T} := (\Bar{\OM}_s^0 \times \{0\}) \cup ((\Gt \cup \Gst) \times [0,T]) $ and $ \partial_P U_T := \partial_P U_{f,T} \cup \partial_P U_{s,T} $. First of all, we claim that $ c \in C^{2,1}_{loc}(U_T) \cap C(\Bar{U_T}) $, where
	\begin{equation*}
		C^{2s,s}(U_T) := \left\{
			c(\cdot, t) \in C^{2s}(\OM^t \backslash \Gamma^t),
			\ 
			c(x, \cdot) \in C^s(0,T), \ \forall x \in \OM^t \backslash \Gamma^t, t \in (0,T)
		\right\},
	\end{equation*}
	for $ s > 0 $. As shown above, assume $ c \in \YT^3 $ is the solution of 
	\begin{equation}
		\label{concentration-Eulerian}
		\pt c - D \Delta c = - (\bv \cdot \nabla c + (\Div \bv + \beta) c) =: f.
	\end{equation}
	With the regularity of $ \bv, c $ and the embedding theorem, we know that $ f \in C_{loc}^{\alpha, \alpha/2}(U_T) $ for some $ 0 < \alpha < 1 $. By the local regularity theory for parabolic equations, one obtains
	\begin{equation*}
		c \in C^{2 + \alpha,1 + \frac{\alpha}{2}}_{loc}(U_T) \hookrightarrow C^{2,1}_{loc}(U_T).
	\end{equation*}
	The continuous of $ c $ can be derived directly from the Lemma \ref{embedding}, especially \eqref{embedding: W21 W1} with 
	\begin{equation*}
		\W{1} \hookrightarrow C^{1 - \frac{n}{q}} \hookrightarrow C^0, \text{ for } q > n.
	\end{equation*}
	
	Now, given a nonnegative initial value $ c^0(x) \geq 0 $, $ x \in \OM^0 $. Define $ c_\lambda := e^{- \lambda t} c $ where $ \lambda > 0 $ is a constant which will be assigned later. Adding $ c c_\lambda $ to the both sides of \eqref{concentration-Eulerian}, we have the equation for $ c_\lambda $
	\begin{equation*}
		\pt c_\lambda - D \Delta c_\lambda + \bv \cdot \nabla c_\lambda + (\Div \bv + c + \beta + \lambda) c_\lambda  = c^2 e^{- \lambda t} \geq 0.
	\end{equation*}
	Taking $ \lambda $ sufficiently large such that
	\begin{equation*}
		\beta + \lambda \geq \sup_{0 \leq t \leq T, x \in \OM^t \backslash \Sigma^t} \abs{\Div \bv} + \abs{c},
	\end{equation*}
	one obtains
	\begin{equation*}
		\Div \bv + c + \beta + \lambda \geq 0.
	\end{equation*}
%	Let $ U_T = \{ (x,t): \ x \in \OM^t,\ t \in (0,T) \} $, $ \partial U_{f,T} = \{ (x,t): \ x \in \Bar{\OM}_f^0,\ t = 0 \text{ or } x \in \Gt, t \in [0,T] \} $, $ \partial U_{s,T} = \{ (x,t): \ x \in \Bar{\OM}_s^0,\ t = 0 \text{ or } x \in \Gt \cup \Gst, t \in [0,T] \} $ and $ \partial U_T = \partial U_{f,T} \cup \partial U_{s,T} $. 
	By the weak maximum principle for parabolic equations, we have
	\begin{gather*}
		\min_{ \Bar{U}_{f,T} }
		c_f(x, t) \geq 
		- \max_{ \partial_P U_{f,T} }
		c_f^-(x, t), \quad
		\min_{ \Bar{U}_{s,T} }
		c_s(x, t) \geq
		- \max_{  \partial_P U_{s,T} }
		c_s^-(x, \tau),
	\end{gather*}
	namely,
	\begin{equation*}
		\min_{ \Bar{U}_T }
		c(x, t) \geq 
		- \max_{ \partial_P U_T }
		c^-(x, t),
	\end{equation*}
	where $ c^-(x, t) := - \min\{ c(x, t), 0\} $.
	
	Since $ c^0(x) \geq 0 $, now we claim that $ c(x,t) \geq 0 $ for all $ (x,t) \in (\Gt \cup \Gst) \times [0,T] $. To this end, we argue by contradiction. Assume that for some $ t_0 \in (0, T] $, there exist a point $ x_0 \in \Gamma^{t_0} \cup \Gs^{t_0} $, such that
	\begin{equation*}
		c(x_0, t_0) = - \max_{ x \in \Gamma^{t_0} \cup \Gs^{t_0} } c^-(x, t_0) < 0,
	\end{equation*}
	that is, 
	\begin{equation*}
		\min_{ x \in \Gamma^{t_0} \cup \Gs^{t_0} } \min\{ c(x, t_0), 0\} < 0,
	\end{equation*}
	which implies that $ x \mapsto \min\{ c(x, t_0), 0\} $ attains a negative minimum at $ x_0 $, i.e., $ x \mapsto c(x, t_0) $ attains a negative minimum at $ x_0 $. 
	
	%	\textbf{Case 1: $ x_0 \in \OM^{t_0} \backslash \Gamma^{t_0} $.} In this case, the strong maximal principle for parabolic equations implies that $ c(x, t) = 0 $ for all $ (x,t) \in \Bar{\OM}^t \times [0, t_0] $, which contradicts to the positivity of the prescribed initial data.
	
	\textbf{Case 1: $ x_0 \in \Gamma^{t_0} $.} For both $ \Of^{t_0} $ and $ \Os^{t_0} $, since $ \Gamma^{t_0} $ is assumed to be a $ C^{3-} $ interface, we infer from Hopf's Lemma that
	\begin{equation*}
		\Df \nabla c_f \cdot \bn_{\Gamma^{t_0}}(x_0) < 0, \quad
		\Ds \nabla c_s \cdot \bn_{\Gamma^{t_0}}(x_0) > 0, \text{ on } \Gamma^{t_0}.
	\end{equation*}
	Hence,
	\begin{equation*}
		\jump{D \nabla c} \cdot \bn_{\Gamma^{t_0}}(x_0) < 0,
	\end{equation*}
	which contradicts to \eqref{TC1}.
	
	\textbf{Case 2: $ x_0 \in \Gs^{t_0} $.} Again by Hopf's Lemma, one obtains 
	\begin{equation*}
		D \nabla c \cdot \bn_{\Gs^{t_0}}(x_0) < 0, \text{ on } \Gs^{t_0},
	\end{equation*}
	which contradicts to \eqref{boundsc}.
	
	In summary, $ c(x,t) \geq 0 $ for all $ (x,t) \in \overline{\OM}^t \times [0,T] $.
	
	For $ \hcss $ and $ \hg $, notice that the equations for them in Lagrangian coordinate are ordinary differential equations with suitable $ \hcs \geq 0 $. Then
	\begin{gather*}
		\hcss = \int_{0}^t e^{\int_{t}^\sigma \frac{\gamma \beta}{\hrs} \hcs(x,\tau) \d \tau} \beta \hcs(x,\sigma) \d \sigma > 0, \quad
		\hg = e^{\int_{0}^t \frac{\gamma \beta}{n \hrs} \hcs(x,\tau) \d \tau} > 0,
	\end{gather*}
	which completes the proof.
\end{proof}

\appendix
\section{Some results on linear systems}\label{results: linear}
In this section, we give several maximal $ L^q $-regularity results of different problems, which are needed for the whole system. 
\subsection{Two-phase stokes problems with Dirichlet boundary condition} \label{twophaseD}
In this section, we focus on the following nonstationary two-phase Stokes problem.
\begin{equation}
	\label{twophase: Dirichlet boundary}
	\begin{aligned}
		\vr \pt v - \Div (2 \mu D v) + \nabla p & = \vr f_u, && \quad \text{in}\  \OM \backslash \Sigma \times (0, T), \\
		\Div v & = g_d, && \quad \text{in}\  \OM \backslash \Sigma \times (0, T), \\
		v & = g_b, && \quad \text{on}\  \partial \OM \times (0, T), \\
		\jump{v} & = g_u, && \quad \text{on}\  \Sigma \times (0, T), \\
		\jump{- 2 \mu D v + p \bbI} \nusigma & = g, && \quad \text{on}\  \Sigma \times (0, T), \\
		\rv{v}_{t = 0} & = v_0, && \quad \text{in}\  \OM,
	\end{aligned}
\end{equation}
where $ \OM \subset \bbr^n $, $ n \geq 2 $, is a bounded domain with $ \partial \OM \in C^3 $, $ \Sigma \subset \OM $ a closed hypersurface of class $ C^3 $. $ \vr_j $ are positive constants, $ j = 1,2 $. $ v: \OM \times (0,T) \rightarrow \bbr^n $ is the velocity of the fluid, $ p: \OM \times (0,T) \rightarrow \bbr $ denotes the pressure. $ \mu > 0 $ is the constant viscosity and $ Dv = \onehalf \left( \nabla v + \nabla v^\top \right) $. $ \nusigma $ represents the unit outer normal vector on $ \Sigma $. $ f_u, g_d, g_b, g_u, g $ are given functions and $ v_0 $ is the prescribed initial value. System \eqref{twophase: Dirichlet boundary} was been investigated by many scholars in various aspects. We refer for the maximal $ L_q $ regularity results of such kind of two-phase Stokes problem to Pr\"{u}ss and Simonett \cite{PS2016}. Readers can also find similar results in Abels and Moser \cite{AM2018} for $ (g_b, g_u) = 0 $.
\begin{proposition}
	\label{twophaseD: proposition}
	Let $ q > n + 2 $, $ \OM \subset \bbr^n $ be a bounded domain with $ \partial \OM \in C^3 $, $ \Sigma \subset \OM $ a closed hypersurface of class $ C^3 $. Assume that $ (f_u, g_d, g_b, g_u, g) \in \ZT $ where
	\begin{equation*}
		\ZT := \left\{
			\begin{aligned}
				& f_u \in \Lq{q}\left( 0,T; \Lq{q}(\OM) \right)^n, \quad 
				g_d \in \Lq{q}\left( 0,T; \W{1}(\OM \backslash \Sigma) \right), \\
				& g_b \in \W{2 - \frac{1}{q}, 1 - \frac{1}{2q}}(\partial \OM \times (0,T))^n, \quad
				g_u \in \W{2 - \frac{1}{q}, 1 - \frac{1}{2q}}(\Sigma \times (0,T))^n,  \\
				& g \in \W{1 - \frac{1}{q}, \onehalf \left( 1 - \frac{1}{q} \right)}(\Sigma \times (0,T))^n : 
				(g_d, g_b \cdot \nu_{\partial \OM}, g_u \cdot \nusigma) \in \W{1}\left( 0,T; \widehat{W}_{q}^{-1}(\OM) \right)
			\end{aligned}
		\right\}
	\end{equation*}
	and $ v_0 \in \W{2 - \frac{2}{q}}(\OM \backslash \Sigma)^n $ satisfying the compatibility conditions
	\begin{equation}
		\label{twophaseD: compatibility}
		\Div v_0 = \rv{g_d}_{t = 0}, \  \rv{v_0}_{\partial \OM} = \rv{g_b}_{t = 0}, \  \rv{\jump{v_0}}_{\Sigma} = \rv{g_u}_{t = 0}, \ \rv{\jump{\left( 2 \mu D v_0 \nusigma \right)_{\tau}}}_{\Sigma} = \rv{g_{\tau}}_{t = 0}.
	\end{equation}
	Then two-phase Stokes problem \eqref{twophase: Dirichlet boundary} admits a unique solution $ (v, p) $ with regularity
	\begin{gather*}
		v \in \Lq{q}\left( 0,T; \W{2}(\OM \backslash \Sigma) \right)^n \cap \W{1}\left( 0,T; \Lq{q}(\OM) \right)^n, \\
		p \in \Lq{q}\left( 0,T; \WA{1}(\OM \backslash \Sigma) \right), \quad \jump{p} \in \W{1 - \frac{1}{q}, \onehalf \left( 1 - \frac{1}{q} \right)}(\Sigma \times (0,T)).
	\end{gather*}
	Moreover, for any fixed $ 0 < T_0 < \infty $, there is a constant $ C $, independent of $ T \in (0,T_0] $, such that
	\begin{align}
		& \norm{v}_{\Lq{q}\left( 0,T; \W{2}(\OM \backslash \Sigma) \right)^n} 
		+ \norm{v}_{\W{1}\left( 0,T; \Lq{q}(\OM) \right)^n} \nonumber \\ 
		& \qquad \quad + \norm{p}_{\Lq{q}\left( 0,T; \WA{1}(\OM \backslash \Sigma) \right)} 
		+ \norm{\jump{p}}_{\W{1 - \frac{1}{q}, \onehalf \left( 1 - \frac{1}{q} \right)}(\Sigma \times (0,T))} \nonumber \\
		& \quad \leq C \left( 
			\norm{f_u}_{\Lq{q}\left( 0,T; \Lq{q}(\OM) \right)^n} + \norm{g_d}_{\Lq{q}\left( 0,T; \W{1}(\OM \backslash \Sigma) \right)} + \norm{g_b}_{\W{2 - \frac{1}{q}, 1 - \frac{1}{2q}}(\partial \OM \times (0,T))^n} \right. \label{propA.2: estimates} \\
			& \left. \quad \quad \quad + \norm{g_u}_{\W{2 - \frac{1}{q}, 1 - \frac{1}{2q}}(\Sigma \times (0,T))^n} + \norm{\pt (g_d, g_b \cdot \nu_{\partial \OM}, g_u \cdot \nusigma)}_{\Lq{q}\left( 0,T; \widehat{W}_q^{-1}(\OM) \right)} \right. \nonumber \\
			& \left. \qquad \qquad\qquad \qquad\qquad \qquad \quad + \norm{g}_{\W{1-\frac{1}{q}, \onehalf \left( 1 - \frac{1}{q} \right)}(\Sigma \times (0,T))^n} + \norm{v_0}_{\W{2 - \frac{2}{q}}(\OM \backslash \Sigma)^n}
		\right). \nonumber 
	\end{align}
	Here, $ \widehat{W}_{q}^{-1}(\OM) $ is the space of all triples $ (\vp , \psi, \chi) \in \Lq{q}(\OM) \times \W{2 - 1/q}(\partial \OM)^n \times \W{2 - 1/q}(\Sigma)^n $, which enjoy the regularity property $ (\vp, \psi \cdot \nu_{\partial \OM}, \chi \cdot \nusigma) \in \dot{W}_q^{-1}(\OM) = (\dot{W}_{q'}^{1}(\OM))' $, where
	\begin{equation}
		\label{regularity property}
		\inner{(\vp, \psi \cdot \nu_{\partial \OM}, \chi \cdot \nusigma)}{\phi} := - \inner{\vp}{\phi}_{\OM} + \inner{\psi \cdot \nu_{\partial \OM}}{\phi}_{\partial \OM} + \inner{\chi \cdot \nusigma}{\phi}_{\Sigma},
	\end{equation}
	for all $ \phi \in \dot{W}_{q'}^{1}(\OM) $.
\end{proposition}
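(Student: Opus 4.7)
The plan is to reduce \eqref{twophase: Dirichlet boundary} to the homogeneous case with only a non-trivial stress jump $g$ on $\Sigma$ and zero initial data, for which the maximal $L^q$-regularity result is available in Pr\"{u}ss--Simonett \cite{PS2016} (two-phase Stokes) and Abels--Moser \cite{AM2018} (bounded domain, $(g_b,g_u)=0$). The novelty to be addressed is the simultaneous presence of $g_b\neq 0$ on $\partial\OM$, $g_u\neq 0$ on $\Sigma$ and the ``hidden'' regularity condition $(g_d,g_b\cdot\nu_{\partial\OM},g_u\cdot\nusigma)\in W^1_q(0,T;\widehat{W}_q^{-1}(\OM))$ originating from \eqref{regularity property}.

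\textbf{Step 1 (Reduction of the initial value and of $g_b,g_u$).} First I would extend $v_0\in W^{2-2/q}_q(\OM\setminus\Sigma)^n$ to a function $v_*\in \Lq{q}(0,T;\W{2}(\OM\setminus\Sigma))^n\cap \W{1}(0,T;\Lq{q}(\OM))^n$ by solving a two-phase parabolic problem with $\jump{v_*}=0$ and $v_*=0$ on $\partial\OM$ (here the tangential-stress-jump compatibility is used). Subtracting $v_*$ reduces us to $v_0=0$ at the cost of modifying $f_u,g_d,g_b,g_u,g$ in their natural classes, preserving all compatibility conditions. Next, I would lift $g_b$ and $g_u$: using the trace characterisation of the anisotropic Sobolev--Slobodeckij space given in Lemma \ref{embedding} and the regularity of $\partial\OM,\Sigma\in C^3$, there exists $w\in \Lq{q}(0,T;\W{2}(\OM\setminus\Sigma))^n\cap\W{1}(0,T;\Lq{q}(\OM))^n$ with $w|_{\partial\OM}=g_b$ and $\jump{w}|_\Sigma=g_u$ and with norm controlled by the right-hand side of \eqref{propA.2: estimates}. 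Replacing $v$ by $v-w$ then removes the Dirichlet boundary value and the velocity jump on $\Sigma$.

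\textbf{Step 2 (Reduction of the divergence equation).} After Step 1 the divergence becomes $g_d - \Div w$, whose normal trace on $\partial\OM\cup\Sigma$ belongs to $\W{1}(0,T;\widehat{W}_q^{-1}(\OM))$ thanks to the hidden regularity assumption in the definition of $\ZT$. I would then construct a solution $w'$ of the transmission-type elliptic problem
\begin{equation*}
\Div w' = g_d-\Div w \quad\text{in}\ \OM\setminus\Sigma,\qquad w'|_{\partial\OM}=0,\qquad \jump{w'}|_\Sigma=0,
\end{equation*}
with $w'\in \Lq{q}(0,T;\W{2}(\OM\setminus\Sigma))^n\cap\W{1}(0,T;\Lq{q}(\OM))^n$, using a Bogovski\u{\i}-type operator on each subdomain combined with a corrector that adjusts the normal trace; the fact that the normal trace is time-differentiable with values in $\widehat{W}_q^{-1}$ is precisely what gives $\pt w'\in\Lq{q}(0,T;\Lq{q}(\OM))^n$. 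Subtracting $w'$ brings us to the case $(v_0,g_d,g_b,g_u)=0$.

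\textbf{Step 3 (Reduced problem and uniformity in $T$).} The remaining problem has only $f_u\in\Lq{q}(0,T;\Lq{q}(\OM))^n$ and $g\in\W{1-1/q,(1-1/q)/2}(\Sigma\times(0,T))^n$, with zero initial data and zero data on $\partial\OM\cup\Sigma$ otherwise. This is exactly in the framework of \cite[Chapters 6 and 8]{PS2016} (see also \cite{AM2018}): localisation by a partition of unity reduces the problem to a finite collection of model problems (interior whole-space Stokes, one-phase Stokes half-space with Dirichlet boundary, and two-phase half-space Stokes with flat interface), each solved via $\mathcal{R}$-sectoriality and operator-valued Fourier multiplier theorems. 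A Neumann series argument absorbs the commutators arising from the partition of unity, yielding the unique solvability of $(v,p)$ together with the trace regularity of $\jump{p}$. The uniformity of the constant for $T\in(0,T_0]$ follows as usual by working on $(0,T_0)$ with zero extension in time of the data (which is admissible because $v_0=0$ after Step 1 and all data have vanishing traces at $t=0$).

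\textbf{Main obstacle.} The delicate point I expect is Step 2: the construction of $w'$ must respect simultaneously the transmission conditions on $\Sigma$, the Dirichlet boundary on $\partial\OM$, and the time regularity $\pt w'\in\Lq{q}(0,T;\Lq{q}(\OM))^n$. This is where the somewhat awkward triple $(g_d,g_b\cdot\nu_{\partial\OM},g_u\cdot\nusigma)\in W^1_q(0,T;\widehat{W}_q^{-1}(\OM))$ appears naturally (via integration by parts in \eqref{regularity property}); once this compatibility is exploited to build a suitable right-inverse of the divergence with the correct boundary behaviour, the rest of the argument is a standard application of the maximal regularity machinery for two-phase Stokes systems.
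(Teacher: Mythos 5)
Your plan is a genuinely different route from the paper's, and as written it has concrete gaps. The paper does not redo any localization/model-problem analysis at all: its proof extends the data $(f_u,g_d,g_b,g_u,g)$ from $(0,T)$ to $(0,\infty)$ using the $T$-uniform extension operators of Appendix \ref{appendix:extension} (zero extension only for $f_u$; even reflection around $t=T$ for the data with positive time regularity), multiplies by $e^{-\omega t}$ so as to land in the exponentially weighted setting of Pr\"uss--Simonett, applies their Theorem 8.1.4 directly to get $(u,\pi)$ on the half-line, sets $(v,p)=e^{\omega t}(u,\pi)$ and restricts to $[0,T]$; uniqueness is then a two-line energy identity. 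The whole point of that argument is the $T$-independence of the extension constants. Your Step 3 replaces this by "zero extension in time of the data", and that is where your proof of the $T$-uniform estimate \eqref{propA.2: estimates} breaks: $g_b,g_u,g$ and the triple $(g_d,g_b\cdot\nu_{\partial\OM},g_u\cdot\nusigma)$ carry positive time-Sobolev regularity (exponents $1-\frac{1}{2q}$, $\onehalf(1-\frac1q)$, $1$, all $>\frac1q$), so extending them by zero beyond $t=T$ creates a jump at $t=T$ and leaves the relevant spaces; vanishing traces at $t=0$ are irrelevant to this. A genuine $T$-uniform extension (the reflection construction of Theorems \ref{extesion: zero initial}--\ref{extension: general anisotropic}) is needed, and supplying it is essentially the paper's proof.

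Two further steps of your reduction would fail as stated. In Step 1 the auxiliary two-phase parabolic problem with $\rv{v_*}{}_{\partial\OM}=0$, $\jump{v_*}=0$ and $\rv{v_*}{}_{t=0}=v_0$ requires the compatibility conditions $\rv{v_0}{}_{\partial\OM}=0$ and $\jump{v_0}=0$, which are false in general (they equal $\rv{g_b}{}_{t=0}$ and $\rv{g_u}{}_{t=0}$); you should instead take any lift of $v_0$ in the maximal regularity class (trace-method interpolation), without imposing boundary conditions. In Step 2, a corrector $w'$ with $\hdiv$-image $g_d-\Div w$, zero Dirichlet trace on $\partial\OM$, zero jump across $\Sigma$ and two-phase $\W{2}$ regularity cannot be constructed in general: Bogovski\u{\i} on each subdomain separately would require the mean of $g_d-\Div w$ to vanish on $\OM^-$ and $\OM^+$ individually, which does not follow from the hypotheses (only the total mean vanishes, cf.\ Remark \ref{twophase: hidden condition}), while Bogovski\u{\i} on all of $\OM$ does not give $\W{2}(\OM\backslash\Sigma)$ regularity. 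The standard fix is the gradient-of-potential reduction (solve an elliptic transmission problem $\hDelta\phi=g_d-\Div w$ and subtract $\nabla\phi$, as the paper does in Section \ref{section 3}), accepting that this re-introduces nonzero boundary and jump data in the correct classes — at which point your reductions add little over invoking the two-phase result with full inhomogeneities, which is exactly what the paper does.
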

\begin{proof}
	We proceed to prove this theorem with Theorem 8.1.4 in \cite{PS2016}, by which we need some special treatments for \eqref{twophase: Dirichlet boundary}. The first one is to extend the quintuple $ (f_u, g_d, g_b, g_u, g) $ from $ \ZT $ to $ Z_{\infty} $. Notice that $ f_u \in \Lq{q}( 0,T; \Lq{q}(\OM) )^n $ without time derivatives, then we extend it by zero simply as a new function $ \bar{f}_u = \chi_{[0,T]} f_u \in \Lq{q}( 0,\infty; \Lq{q}(\OM) )^n $. Since $ g_d \in \Lq{q}( 0,T; \W{1}(\OM \backslash \Sigma) ) \cap \W{1}( 0,T; \W{-1}(\OM) )  $, by Theorem \ref{extension: general} with $ X_1 = \W{1}(\OM \backslash \Sigma) $, $ X_0 = \W{-1}(\OM) $, we obtain a new function $ \bar{g}_d := \cE(g_d) \in \Lq{q}( 0,\infty; \W{1}(\OM \backslash \Sigma) ) \cap \W{1}( 0,\infty; \W{-1}(\OM) ) $, which is uniformly bounded for $ T \leq T_0 $. For $ (g_b, g_u, g) \in \W{2 - 1/q, 1 - 1/2q}(\partial \OM \times (0,T))^n \times \W{2 - 1/q, 1 - 1/2q}(\Sigma \times (0,T))^n \times \W{1 - 1/q, ( 1 - 1/q )/2}(\Sigma \times (0,T))^n $, Theorem \ref{extension: general anisotropic} with $ \alpha = 1 - 1/2q > 1/q $ and $ (1 - 1/q)/2 > 1/q $ respectively imply that they can be extended as $ (\bar{g}_b, \bar{g}_u, \bar{g}) := \cE(g_b, g_u, g) \in \W{2 - 1/q, 1 - 1/2q}(\partial \OM \times (0,\infty))^n \times \W{2 - 1/q, 1 - 1/2q}(\Sigma \times (0,\infty))^n \times \W{1 - 1/q, ( 1 - 1/q )/2}(\Sigma \times (0,\infty))^n $, which are uniformly bounded for $ T \leq T_0 $. In summary,
	\begin{equation*}
		\rv{(\bar{f}_u, \bar{g}_d, \bar{g}_b, \bar{g}_u, \bar{g})}_{[0,T]} = (f_u, g_d, g_b, g_u, g)
	\end{equation*}
	and
	\begin{equation*}
		(\bar{f}_u, \bar{g}_d, \bar{g}_b, \bar{g}_u, \bar{g})
		\in
		Z_{\infty}.
	\end{equation*}
	
	Now, for a constant $ \omega > \omega_0 \geq 0 $, define 
	$$ (\tilde{f}_u, \tilde{g}_d, \tilde{g}_b, \tilde{g}_u, \tilde{g})(t) = e^{- \omega t} (\bar{f}_u, \bar{g}_d, \bar{g}_b, \bar{g}_u, \bar{g})(t), $$ 
	then it is easy to verify $ (\tilde{f}_u, \tilde{g}_d, \tilde{g}_b, \tilde{g}_u, \tilde{g}) $ is also contained in $ Z_{\infty} $, since $ e^{-\omega t} $ is smooth with respect to time $ t $. 
	
	Let $ (u, \pi) $ be the solution of (8.4) in \cite{PS2016} with $ (f_u, g_d, g_b, g_u, g) = (\tilde{f}_u, \tilde{g}_d, \tilde{g}_b, \tilde{g}_u, \tilde{g}) $ given above, as well as the constant viscosity $ \mu > 0 $ in \eqref{twophase: Dirichlet boundary}. For all $ t \in \bbr_+ $, define 
	\begin{equation*}
		v(t) = e^{\omega t} u(t), \quad p(t) = e^{\omega t} \pi(t),
	\end{equation*}
	then $ (v, p) $ solves \eqref{twophase: Dirichlet boundary} for $ t \in [0,T] $. Consequently, existence and regularity of $ (u, \pi) $, which are given by Theorem 8.1.4 in \cite{PS2016}, imply those of $ (v, p) $. Additionally, \eqref{propA.2: estimates} holds under our construction of $ (v, p) $.
	
	Finally, we need to show that our solution is unique. To this end, let $ (v_1, p_1) \neq (v_2, p_2) $ be two solutions of \eqref{twophase: Dirichlet boundary} in $ (0,T) $ with same source terms and initial value. Define $ (v, p) = (v_1 - v_2, p_1 - p_2) $. Since \eqref{twophase: Dirichlet boundary} is linear, then $ (v, p) $ satisfies
	\begin{equation}
		\label{twophase: Dirichlet boundary difference}
		\begin{aligned}
			\vr \pt v - \Div (2 \mu D v) + \nabla p & = 0, && \quad \text{in}\  \OM \backslash \Sigma \times (0, T), \\
			\Div v & = 0, && \quad \text{in}\  \OM \backslash \Sigma \times (0, T), \\
			v & = 0, && \quad \text{on}\  \partial \OM \times (0, T), \\
			\jump{v} & = 0, && \quad \text{on}\  \Sigma \times (0, T), \\
			\jump{- 2 \mu D v + p \bbI} \nusigma & = 0, && \quad \text{on}\  \Sigma \times (0, T), \\
			\rv{v}_{t = 0} & = 0, && \quad \text{in}\  \OM.
		\end{aligned}
	\end{equation}
	Multiplying the first equation of \eqref{twophase: Dirichlet boundary difference} by $ v $ and integrating by parts over $ \OM \backslash \Sigma \times (0,t) $, one obtains
	\begin{align*}
		& \int_{\OM \backslash \Sigma} \vr \abs{v(t)}^2 \d x 
		+ \int_0^t \int_{\OM \backslash \Sigma} 2 \mu \abs{Dv(x,t)}^2 \d x \d t
		= 0, \quad \text{for a.e. } t \in (0,T),
	\end{align*}
	which implies the uniqueness and completes the proof.
\end{proof}
\begin{remark}
	\label{twophase: hidden condition}
	For $ (g_d, g_b \cdot \nu_{\partial \OM}, g_u \cdot \nusigma) \in \W{1}( 0,T; \widehat{W}_{q}^{-1}(\OM) ) $, we notice that
	\begin{equation*}
		\int_{\OM} g_d \d x = \int_{\partial \OM} g_b \cdot \nu_{\partial \OM} \d (\partial \OM) - \int_{\Sigma} g_u \cdot \nusigma \d \Sigma,
	\end{equation*}
	when $ \phi = 1 $ in \eqref{regularity property}, the regularity property of $ \widehat{W}_{q}^{-1}(\OM) $. Thus, for the zero-Dirichlet problem, which means $ g_b = g_u = 0 $ in \eqref{twophase: Dirichlet boundary}, one has an hidden compatibility condition
	\begin{equation*}
		\int_{\OM} g_d \d x = 0.
	\end{equation*}
	This is an important condition when we solve the Stokes type problems with homogeneous Dirichlet boundary conditions.
\end{remark}
\subsection{Parabolic equations with Neumann boundary conditions}
\label{parabolic}
Thanks to general maximal regularity theory of parabolic problem, for example, Pr\"{u}ss and Simonett \cite[Section 6.3]{PS2016}, we give the solvability of parabolic systems with Neumann boundary conditions.
Let $ \OM \subset \bbr^n $, $ n \geq 1 $, be a domain with compact boundary $ \partial \OM $ of class $ C^2 $, we consider the following system
\begin{equation}
\label{parabolic: Neumann boundary}
	\begin{aligned}
		\pt u - D \Delta u & = f, && \quad \text{in}\  \OM \times (0, T), \\
		D \nabla u \cdot \nu & = g, && \quad \text{on}\  \partial \OM \times (0, T), \\
		\rv{u}_{t = 0} & = u_0, && \quad \text{in}\  \OM,
	\end{aligned}
\end{equation}
where $ u $ represents some physical property, for example, temperature or concentration. $ D $ is the diffusion coefficient. $ \nu $ denotes the unit outer normal vector on $ \partial \OM $. $ f $ and $ g $ are give functions standing for the source or reaction term. Now we state the proposition for \eqref{parabolic: Neumann boundary}.
\begin{proposition}
	\label{parabolic: proposition}
	Let $ \OM \subset \bbr^n $, $ n \geq 1 $, be a domain with compact boundary $ \partial \OM $ of class $ C^2 $, $ q > 3 $. Assume that 
	\begin{equation*}
		f \in \Lq{q}( 0,T; \Lq{q}(\OM) ), \quad
		g \in \W{1 - \frac{1}{q}, \onehalf \left( 1 - \frac{1}{q} \right)}(\OM \times (0,T)),
	\end{equation*}
	and $ u_0 \in \W{2 - \frac{2}{q}}(\OM) $ satisfying the compatibility condition
	\begin{equation*}
		\rv{D \nabla u_0 \cdot \nu}_{\partial \OM} = \rv{g}_{t = 0}.
	\end{equation*}
	Then there exists a unique solution $ u \in \W{2,1}(\OM \times (0,T)) $ of \eqref{parabolic: Neumann boundary}. Moreover, 
	\begin{align*}
		\norm{u}_{\W{2,1}(\OM \times (0,T))}
		\leq C \left( \norm{f}_{\Lq{q}( 0,T; \Lq{q}(\OM) )}
		+ \norm{g}_{\W{1 - \frac{1}{q}, \onehalf \left( 1 - \frac{1}{q} \right)}(\OM \times (0,T))}
		+ \norm{u_0}_{\W{2 - \frac{2}{q}}(\OM)} \right),
	\end{align*}
	where $ C $ does not depend on $ T \in (0,T_0] $ for any fixed $ 0 < T_0 < \infty $.
\end{proposition}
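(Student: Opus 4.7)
The plan is to reduce Proposition \ref{parabolic: proposition} to the classical maximal $L^q$-regularity result for the Neumann Laplacian on a bounded $C^2$ domain, which generates an analytic semigroup on $L^q(\Omega)$ and enjoys maximal $L^q$-regularity by, e.g., \cite[Section 6.3]{PS2016}. The nontrivial part is producing the constant $C$ independent of $T\in(0,T_0]$, and this will be handled, as in the proof of Proposition \ref{twophaseD: proposition}, by extending the data to $(0,\infty)$ with uniform bounds and then shifting by a spectral parameter to make the problem invertible on the half line.

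First I would reduce to zero initial data. Since $u_0\in W^{2-2/q}_q(\Omega)$, one chooses $w\in W^{2,1}(\Omega\times(0,T_0))$ with $w|_{t=0}=u_0$ and $\|w\|_{W^{2,1}(\Omega\times(0,T_0))}\le C\|u_0\|_{W^{2-2/q}_q(\Omega)}$; for instance take $w(t)=e^{tD\Delta_N}u_0$ where $\Delta_N$ is the Neumann Laplacian, or an even reflection near $t=0$ of a simple heat extension. Replacing $u$ by $u-w$ then absorbs $u_0$ into the right-hand sides, at the cost of new $\tilde f\in L^q(0,T;L^q(\Omega))$ and $\tilde g\in W^{1-1/q,(1-1/q)/2}(\partial\Omega\times(0,T))$ with the compatibility condition $\tilde g|_{t=0}=0$ and uniform control for $T\le T_0$.

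Next, to get the constant independent of $T$, extend the reduced data from $(0,T)$ to $(0,\infty)$ uniformly. For $\tilde f$ one simply takes $\bar f=\chi_{[0,T]}\tilde f\in L^q(0,\infty;L^q(\Omega))$. For $\tilde g$ with vanishing trace at $t=0$, Theorem \ref{extension: general anisotropic} (cited in the paper and used exactly this way in Proposition \ref{twophaseD: proposition}) provides an extension $\bar g\in W^{1-1/q,(1-1/q)/2}(\partial\Omega\times(0,\infty))$ bounded uniformly in $T\le T_0$. Multiplying by $e^{-\omega t}$ for some fixed $\omega>0$ (chosen below) yields data $(\hat f,\hat g):=e^{-\omega t}(\bar f,\bar g)$ in the same class on $(0,\infty)$ with the same uniform bounds.

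Then I would solve the shifted problem
\begin{equation*}
\partial_t \hat u+\omega \hat u-D\Delta \hat u=\hat f\ \text{in }\Omega\times(0,\infty),\qquad D\nabla \hat u\cdot\nu=\hat g\ \text{on }\partial\Omega\times(0,\infty),\qquad \hat u|_{t=0}=0.
\end{equation*}
For $\omega>0$ large enough, $-D\Delta_N+\omega$ is invertible on $L^q(\Omega)$ and has maximal $L^q$-regularity on the half line $(0,\infty)$, by the results collected in \cite[Section 6.3]{PS2016}; reducing the inhomogeneous Neumann boundary datum $\hat g$ to a zero boundary problem is standard (solve an elliptic Neumann problem for each $t$ or use a half-space model combined with local charts and the $C^2$ regularity of $\partial\Omega$). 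This produces a unique solution $\hat u$ with $\|\hat u\|_{W^{2,1}(\Omega\times(0,\infty))}$ controlled by the norms of $\hat f,\hat g$, uniformly in $T\le T_0$. Finally setting $u(t)=e^{\omega t}\hat u(t)+w(t)$ on $[0,T]$ gives a solution of \eqref{parabolic: Neumann boundary} with the desired estimate, and $T$-independence of $C$ follows because $e^{\omega t}$ is bounded by $e^{\omega T_0}$. Uniqueness follows from a standard energy estimate: the difference of two solutions solves the homogeneous problem with zero data, and integration by parts against the difference yields $\|u(t)\|_{L^2}^2+\int_0^t\|\nabla u\|_{L^2}^2\le0$.

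The main obstacle is not the existence per se (which is textbook maximal regularity for the Neumann Laplacian) but the $T$-uniform constant, handled by the extension-and-exponential-shift trick above; the compatibility condition $D\nabla u_0\cdot\nu|_{\partial\Omega}=g|_{t=0}$ is exactly what is needed to guarantee that the reduced boundary datum $\tilde g$ vanishes at $t=0$, so that the anisotropic extension theorem applies.
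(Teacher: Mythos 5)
Your proposal follows essentially the same route as the paper's proof: extend the data to $(0,\infty)$ with $T$-independent bounds via the extension theorems of Appendix \ref{appendix:extension}, multiply by $e^{-\omega t}$, apply the maximal $L^q$-regularity result of \cite[Theorem 6.3.2]{PS2016} on the half-line, and transform back, exactly as in the proof of Proposition \ref{twophaseD: proposition}. One small caveat: the choice $w(t)=e^{tD\Delta_N}u_0$ is not admissible for general $u_0\in \W{2-\frac{2}{q}}(\OM)$, since for $q>3$ the trace space of the homogeneous Neumann problem requires $D\nabla u_0\cdot\nu=0$ on $\partial\OM$; use instead your alternative construction (heat extension of a Sobolev extension of $u_0$ to $\bbr^n$), or skip the initial-data reduction entirely, since the cited theorem of \cite{PS2016} handles nonzero $u_0$ together with inhomogeneous Neumann data directly.
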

\begin{proof}
	This proposition can be easily shown by means of Pr\"{u}ss and Simonett \cite[Theorem 6.3.2]{PS2016}, for which we need to extend the right-hand sides just as in the proof of Proposition \ref{twophaseD: proposition} and construct a solution solving (6.45) in \cite{PS2016}. This can be done since we established general extension theorems in Appendix \ref{appendix:extension}.
\end{proof}
%\subsection{Parabolic transmission problems}
%\begin{proposition}
%	\label{transmission parabolic: proposition}
%	Let 
%\end{proposition}
\subsection{Laplacian transmission problems with Dirichlet boundary}
In this section, we investigate a transmission problem for Laplacian equation with Dirichlet boundary condition, which reads
\begin{equation}
	\label{laplace transmission system}
	\begin{aligned}
		- \Delta \psi & = f && \quad \mathrm{in}\  \OM \backslash \Sigma, \\
		\jump{\partial_{\nu} \psi} & = g && \quad \mathrm{on}\  \Sigma, \\
		\jump{\psi} & = h && \quad \mathrm{on}\  \Sigma, \\
		\psi & = g_b && \quad \mathrm{on}\  \partial \OM.
	\end{aligned}
\end{equation}
Here, we denote the inner domain by $ \OM^- $, resp. outer domain by $ \OM^+ $ and the unit normal vector on $ \Sigma = \partial \OM^- $ by $ \nu $. 

The second result concerns the strong solutions.
\begin{proposition}
	\label{transmission laplace: strong proposition}
	Let $ 1 < q < \infty $, $ \OM \subset \bbr^n $, $ n \geq 2 $, with boundary $ \partial \OM $ of class $ C^{3-} $, and let $ \Sigma \subset \OM $ be a closed hypersurface of class $ C^{3-} $, $ s \in \{0,1\} $. Given functions $ f \in \W{s}(\OM \backslash \Sigma) $, $ g \in \W{1 + s - 1/q}(\Sigma) $, $ h \in \W{2 + s - 1/q}(\Sigma) $, $ g_b \in \W{2 + s - 1/q}(\partial \OM) $. Then the problem \eqref{laplace transmission system} admits a unique solution $ \psi \in \W{2 + s}(\OM \backslash \Sigma) $. Moreover, there is a constant $ C > 0 $ such that
	\begin{align*}
		\norm{\psi}_{\W{2 + s}}
		\leq C \left(
			\norm{f}_{\W{s}}
			+ \norm{g}_{\W{1 + s - \frac{1}{q}}}
			+ \norm{h}_{\W{2 + s - \frac{1}{q}}}
			+ \norm{g_b}_{\W{2 + s - \frac{1}{q}}}
		\right).
	\end{align*}
\end{proposition}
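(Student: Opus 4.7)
My plan is to reduce \eqref{laplace transmission system} to a problem with homogeneous Dirichlet data on $\partial\Omega$ and vanishing jump $\jump{\psi}=0$ on $\Sigma$, and then solve the reduced problem by weak theory combined with elliptic regularity on each side of the interface.

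\textbf{Step 1 (reduction of the Dirichlet and jump data).} First I would use the existence of a bounded extension operator from $W_q^{2+s-1/q}(\partial\Omega)$ into $W_q^{2+s}(\Omega)$ (available since $\partial\Omega \in C^{3-}$) to produce $\Psi_b \in W_q^{2+s}(\Omega)$ with $\rv{\Psi_b}_{\partial\Omega} = g_b$. Since $\Sigma$ is of class $C^{3-}$ and lies strictly inside $\Omega$, I would analogously extend $h \in W_q^{2+s-1/q}(\Sigma)$ to a function $\Psi_h \in W_q^{2+s}(\Omega^-)$ supported in a small tubular neighbourhood of $\Sigma$ with $\rv{\Psi_h}_\Sigma = h$, and extend it by zero to $\Omega^+$ to obtain a function in $W_q^{2+s}(\Omega \setminus \Sigma)$ with jump $h$ across $\Sigma$ and vanishing trace on $\partial\Omega$. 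Setting $\tilde\psi := \psi - \Psi_b - \Psi_h$, the unknown $\tilde\psi$ then satisfies a transmission problem of the same type with modified source $\tilde f \in W_q^s(\Omega\setminus\Sigma)$, modified interface flux $\tilde g \in W_q^{1+s-1/q}(\Sigma)$, and with $\jump{\tilde\psi}=0$ on $\Sigma$ and $\tilde\psi=0$ on $\partial\Omega$, with norms controlled by the data.

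\textbf{Step 2 (weak solvability).} Because $\jump{\tilde\psi}=0$, the function $\tilde\psi$ lies globally in $W_{q,0}^1(\Omega)$, and integration by parts shows that the reduced transmission problem is equivalent to the weak formulation
\begin{equation*}
\int_\Omega \nabla\tilde\psi\cdot\nabla\phi\,\d x \;=\; \int_\Omega \tilde f\,\phi\,\d x \;-\; \int_\Sigma \tilde g\,\phi\,\d \mathcal{H}^{n-1}, \qquad \phi \in W^1_{q',0}(\Omega).
\end{equation*}
The right-hand side defines a bounded linear functional on $W^1_{q',0}(\Omega)$, so the standard $L^q$-theory for the Dirichlet Laplacian on a $C^{1,1}$ (hence $C^{3-}$) domain provides a unique solution $\tilde\psi \in W^1_{q,0}(\Omega)$ with the corresponding estimate.

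\textbf{Step 3 (elliptic regularity up to the interface).} It remains to upgrade $\tilde\psi$ to $W_q^{2+s}(\Omega\setminus\Sigma)$. Away from $\Sigma$, this is simply interior and boundary $L^q$ regularity for the Dirichlet Laplacian on $\Omega^+$. Across $\Sigma$, the pair $(\tilde\psi|_{\Omega^-},\tilde\psi|_{\Omega^+})$ satisfies a standard two-phase transmission problem for $-\Delta$ with the Dirichlet--Neumann interface conditions $\jump{\tilde\psi}=0$ and $\jump{\partial_\nu\tilde\psi}=\tilde g$. The Lopatinskii--Shapiro condition is satisfied for this operator, so localising with a smooth partition of unity and flattening $\Sigma$ (using that $\Sigma \in C^{3-}$) reduces the problem to a half-space transmission problem for the Laplacian, for which $L^q$-regularity $W_q^s(\mathbb{R}^n_\pm)\to W_q^{2+s}(\mathbb{R}^n_\pm)$ together with the interface data in $W_q^{1+s-1/q}$ is classical. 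Patching these local estimates yields $\tilde\psi\in W_q^{2+s}(\Omega\setminus\Sigma)$ with the claimed estimate, and hence $\psi = \tilde\psi + \Psi_b + \Psi_h$ is the desired solution. Uniqueness follows from uniqueness at the $W_q^1$-level in Step~2.

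The main (but modest) obstacle is Step~3: one must verify carefully that the localisation and flattening procedure does not lose regularity, which is why the $C^{3-}$ hypothesis on $\Sigma$ and $\partial\Omega$ is needed when $s=1$ (so that the flattened interface conditions retain sufficient smoothness in the coefficients). The reduction steps themselves are purely bookkeeping on standard trace/extension theorems.
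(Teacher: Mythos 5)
Your proposal is correct in substance but follows a genuinely different route from the paper. The paper also first reduces to $(h,g_b)=0$ (by solving two auxiliary harmonic Dirichlet problems rather than by trace-extension operators, which is equivalent bookkeeping), but then it only invokes $L^2$ Lax--Milgram for weak existence, decomposes $\psi=\eta\psi+(1-\eta)\psi$ with a cutoff into a transmission problem with \emph{Neumann} outer boundary (cited from Pr\"uss--Simonett, Prop.~8.6.1) plus an interior Dirichlet problem, and finally climbs from $L^2$- to $L^q$-integrability by a dimension-dependent Sobolev-embedding bootstrap before passing to $s=1$. You instead get existence and uniqueness directly at the $W^1_{q,0}$ level and then prove the $W^{2+s}_q$ regularity by localization, flattening of $\Sigma$, and half-space transmission estimates. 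Both work; what your route buys is the avoidance of the $L^2\to L^q$ bootstrap entirely, but at the price of two nontrivial black boxes that you should cite precisely: the unique weak solvability of the Dirichlet Laplacian in $W^1_{q,0}(\OM)$ for \emph{all} $1<q<\infty$ with $W^{-1}_q$ data (a Simader--Sohr type theorem -- Lax--Milgram does not cover $q\neq 2$, and for $q<2$ your surface-measure functional is not an $L^2$ object), and the $L^q$ elliptic estimates for the flattened two-phase transmission problem (your Step 3 essentially re-derives the interface regularity result that the paper simply imports from Pr\"uss--Simonett). With those references supplied, your argument is complete, including the $s=1$ case, for which the $C^{3-}$ regularity of $\Sigma$ and $\partial\OM$ is indeed what makes the $W^3_q$ extensions and the flattening admissible.
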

\begin{proof}
	\textbf{\textit{Step 1: Reduction.}} We firstly reduce to the case $ (h, g_b) = 0 $. To this end, we find a $ \vp $ solving
	%		\begin{align*}
	%			- \Delta \psi & = f && \quad \mathrm{in}\  \OM \backslash \Sigma, \\
	%			\jump{\partial_{\nu} \psi} & = g && \quad \mathrm{on}\  \Sigma, \\
	%			\jump{\psi} & = h && \quad \mathrm{on}\  \Sigma, \\
	%			\psi & = g_b && \quad \mathrm{on}\  \partial \OM.
	%		\end{align*}
	\begin{equation*}
	\begin{aligned}
	- \Delta \vp & = 0 && \quad \mathrm{in}\  \OM^-, \\
	\vp & = h && \quad \mathrm{on}\  \Sigma,
	\end{aligned}
	\end{equation*}
	and
	\begin{equation*}
	\begin{aligned}
	- \Delta \vp & = 0 && \quad \mathrm{in}\  \OM^+, \\
	\vp & = 0 && \quad \mathrm{on}\  \Sigma, \\
	\vp & = g_b && \quad \mathrm{on}\  \partial \OM.
	\end{aligned}
	\end{equation*}
	The existence and uniqueness of these two systems are clear due to the elliptic theory. Thanks to the trace theorem, the extra outer normal derivatives terms on $ \Sigma $ enjoys the same regularities as $ g $. Substracting $ \varphi $ from $ \psi $, we can investigate the reduce system \eqref{laplace transmission system} with $ (h, g_b) = 0 $.
	\\\textbf{\textit{Step 2: Weak solution with $ L^2 $-setting.}} Now, let $ H^k = W^k_2 $ and $ H^k_0 = W^k_{2,0} $ for $ k \in \bbn $. Testing \eqref{laplace transmission system} by a function $ \phi \in H^1_0(\OM) $ and integrating by parts, one obtains
	\begin{equation*}
		\int_{\OM \backslash \Sigma} \nabla \psi \cdot \nabla \phi \d x
		= \int_{\OM \backslash \Sigma} f \phi \d x
		- \int_{\Sigma} g \phi \d \Sigma 
		=: \inner{F}{\phi}_{H^{-1} \times H^1_0},
	\end{equation*}
	as a result of regularities of $ f $ and $ g $. Lax-Milgram Lemma implies a unique weak solution $ \psi \in H^1_0(\OM) $ to \eqref{laplace transmission system} with $ (h, g_b) = 0 $. 
	\\\textbf{\textit{Step 3: Truncation.}} Since the problem \eqref{laplace transmission system} with the Neumann boundary condition on $ \partial \OM $ has been uniquely solved, see e.g. Pr\"{u}ss and Simonett \cite[Proposition 8.6.1]{PS2016}, we show the propostion by truncation methods. More specifically, define a cutoff function $ \eta \in C_0^\infty(\OM) $ such that
	\begin{equation*}
		\eta(x) = 
		\left\{
			\begin{aligned}
				& 1, \quad \text{in a neighborhood of}\  \OM^-,\\
				& 0, \quad \text{in a neighborhood of}\  \OM^+,
			\end{aligned}
		\right.
	\end{equation*}
	Decompose $ \psi = \eta \psi + (1 - \eta) \psi =: u_1 + u_2 $, where $ u_1 $ solves
	\begin{equation*}
		\begin{aligned}
			- \Delta u_1 & = \eta f - 2 \nabla \eta \cdot \nabla \psi + \psi \Delta \eta =: f^1 && \quad \mathrm{in}\  \OM \backslash \Sigma, \\
			\jump{\partial_{\nu} u_1} & = \jump{\partial_{\nu} \psi} = g && \quad \mathrm{on}\  \Sigma, \\
			\jump{u} & = \jump{\psi} = 0 && \quad \mathrm{on}\  \Sigma, \\
			\partial_{\nu} u_1 & = 0 && \quad \mathrm{on}\  \partial \OM,
		\end{aligned}
	\end{equation*}
	weakly, respectively $ u_2 $ solves
	\begin{equation*}
		\begin{aligned}
			- \Delta u_2 & = (1 - \eta) f + 2 \nabla \eta \cdot \nabla \psi - \psi \Delta \eta =: f^2 && \quad \mathrm{in}\  \OM, \\
			u_2 & = 0 && \quad \mathrm{on}\  \partial \OM.
		\end{aligned}
	\end{equation*}
	\\\textbf{\textit{Step 4: Improving the regularity.}} From Step 2, we already know that \eqref{laplace transmission system} admits a unique weak solution $ \psi $ enjoying the regularity $ \nabla \psi \in  \Lq{2}(\OM) $, which means $ f^i \in \Lq{2}(\OM) $ in Step 3. By classical elliptic theory and \cite{PS2016}, one obtains $ u_1 \in H^2(\OM \backslash \Sigma) $, $ u_2 \in H^1_0(\OM) \cap H^2(\OM) $. Then $ \psi \in H^1_0(\OM) \cap H^2(\OM \backslash \Sigma) $. Moreover,
	\begin{align*}
		\nabla \psi \in H^1(\OM \backslash \Sigma) 
		\hookrightarrow 
		\left\{
			\begin{aligned}
				& \Lq{p}(\OM \backslash \Sigma), && 1 \leq p < \infty, && n = 2, \\
				& \Lq{p}(\OM \backslash \Sigma), && 1 \leq p \leq p^* := \frac{2n}{n - 2}, && n > 2,
			\end{aligned}
		\right. 
	\end{align*}
	due to the Sobolev embedding Theorem.
	For $ n = 2 $, the right-hand side terms $ f^1 $ and $ f^2 $ in Step 3 are contained in $ \Lq{p}(\OM \backslash \Sigma) $, $ 1 \leq p < \infty $. Consequently with $ p = q $, Proposition 8.6.1 and Corollary 7.4.5 in Pr\"{u}ss and Simonett \cite{PS2016} indicate that $ u_1 \in \W{2}(\OM \backslash \Sigma) $ and $ u_2 \in \W{2}(\OM) $, which implies $ \psi \in \W{2}(\OM \backslash \Sigma) $.
	For $ n > 2 $, we have $ f^i \in \Lq{p^*} $, $ i = 1,2 $. Again by regularity results in \cite{PS2016}, we have $ u_1 \in W_{p^*}^2(\OM \backslash \Sigma) $ and $ u_2 \in W_{p^*}^2(\OM) $ and hence 
	\begin{align*}
		\nabla \psi \in W_{p^*}^1(\OM \backslash \Sigma)
		\hookrightarrow 
		\left\{
			\begin{aligned}
				& \Lq{p}(\OM \backslash \Sigma), && 1 \leq p < \infty, && n = q^*, \\
				& \Lq{p}(\OM \backslash \Sigma), && 1 \leq p \leq p^{**} := \frac{np^*}{n - p^*}, && n > p^*, \\
				& C^\alpha(\overline{\OM \backslash \Sigma}), && 0 < \alpha \leq 1 - \frac{n}{p^*} && 2 < n < p^*.
			\end{aligned}
		\right. 
	\end{align*}
	For the first and third cases, we find $ f^i \in \Lq{p}(\OM \backslash \Sigma) $, $ i = 1,2 $, $ 1 \leq p < \infty $, and then get the regularity of $ \psi $. For the second case, we know $ p^{**} = \frac{np^*}{n - p^*} > p^* $. Therefore, by bootstrapping argument, we can always increase the space index until $ \Lq{q} $. Thus, by Proposition 8.6.1 and Corollary 7.4.5 in Pr\"{u}ss and Simonett \cite{PS2016}, one obtains $ u_1 \in \W{2}(\OM \backslash \Sigma) $ and $ u_2 \in \W{2}(\OM) $, i.e., $ \psi \in \W{2}(\OM \backslash \Sigma) $ with the estimate
	\begin{gather*}
		\norm{\psi}_{\W{2}(\OM \backslash \Sigma)} 
		\leq C \left( \norm{f}_{\Lq{q}(\OM \backslash \Sigma)}
		+ \norm{g}_{\W{1 - \frac{1}{q}}(\Sigma)}
		+ \norm{h}_{\W{2 - \frac{1}{q}}(\Sigma)}
		+ \norm{g_b}_{\W{2 - \frac{1}{q}}(\partial \OM)} \right),
	\end{gather*}
	for some constant $ C > 0 $. Then as above, one gets $ f^i \in \W{1}(\OM \backslash \Sigma) $, $ i = 1,2 $. With the help of Proposition 8.6.1 and Corollary 7.4.5 in Pr\"{u}ss and Simonett \cite{PS2016}, we have the desired regularity and estimate with $ s = 1 $.
\end{proof}

\section{Extension of Sobolev-Slobodeckij space} \label{appendix:extension}
In this section, we are intended to construct an extension operator mapping from $ \W{s}(0,T; X) $ to $ \W{s}(0,\infty; X) $, where $ s \in (\frac{1}{q}, 1] $ and $ X $ is a Banach space. The main feature is that the extension constant does not depend on $ T > 0 $, compared to the extension theorem for general Sobolev-Slobodeckij spaces. The reason we made such modification here is that if the constant depends on $ T $, then the extended norm may blow up for small $ T $, which is the case we addressed in this paper. For example, in the proof of Theorem 5.4 in \cite{DPV2012}, the extension from $ \W{s}(\Omega) $ to $ \W{s}(\bbr) $ with $ 0 < s < 1 $, several smooth functions $ \psi_j $ satisfying $ 0 \leq \psi_j \leq 1 $ and $ \sum_{j = 0}^{k} \psi_j = 1 $ are chosen to construct the extension operator. In the case $ \abs{\Omega} \rightarrow 0 $, we have $ \nabla \psi_j \approx \frac{1}{\abs{\Omega}} \rightarrow \infty $, which means that the extension is not valid. To avoid such problem, we employ an the \textit{even} extension and make use of the embedding results in Simon \cite{Simon1990}. Now, we give the extension theorem.
\begin{theorem}
	\label{extesion: zero initial}
	Let $ q \geq 1 $, $ s = 0 $, or $ s \in (\frac{1}{q}, 1] $, $ T > 0 $ and $ X $ be a Banach space. Then there exists an extension operator $ E_T : {_0\W{s}}(0,T; X) \rightarrow \W{s}(0, \infty; X) $, where $ {_0\W{s}}(0,T; X) = \{ u \in \W{s}(0,T; X): \rv{u}_{t = 0} = 0, \text{ if } s > \frac{1}{q} \} $, such that $ \rv{E_T(u)}_{[0,T]} = u $ and
	\begin{equation*}
		\norm{E_T(u)}_{\W{s}(0,\infty; X)}
		\leq C \norm{u}_{{_0\W{s}}(0,T; X)},
	\end{equation*}
	where $ C > 0 $ depends on $ s $, $ q $ and does not depend on $ T $.
\end{theorem}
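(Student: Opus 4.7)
The plan is to construct $E_T$ explicitly by a reflection followed by extension by zero, and then to bound the resulting norm by invoking a scale-invariant vector-valued fractional Hardy inequality.

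\textbf{Case $s = 0$.} Simple extension by zero satisfies all requirements with constant $C = 1$.

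\textbf{Case $s \in (1/q, 1]$.} In this range $\W{s}(0,T;X) \hookrightarrow C([0,T];X)$, so the trace $u(0)$ is well-defined and equals zero by the hypothesis $u \in \WO{s}(0,T;X)$. I would define
\begin{equation*}
(E_T u)(t) :=
\begin{cases}
u(t), & 0 < t < T, \\
u(2T - t), & T \leq t < 2T, \\
0, & t \geq 2T,
\end{cases}
\end{equation*}
which matches continuously at $t = T$ and, thanks to $u(0) = 0$, vanishes continuously at $t = 2T$ as well.

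\textbf{Step A (reflection bound).} The first identity $\norm{E_T u}_{\Lq{q}(0, 2T; X)}^q = 2\norm{u}_{\Lq{q}(0, T; X)}^q$ is a direct change of variables. For the Slobodeckij seminorm, I split $(0, 2T)^2$ into the four subregions determined by whether each coordinate lies in $(0, T)$ or $(T, 2T)$. The two diagonal pieces reduce to $\seminorm{u}_{\W{s}(0,T;X)}^q$ by substituting $\sigma = 2T - \tau$. On the mixed region $(0, T) \times (T, 2T)$, the same substitution transforms the integrand into $\|u(t) - u(\sigma)\|_X^q / (2T - t - \sigma)^{1+sq}$ with $t, \sigma \in (0, T)$. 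The elementary inequality $2T - t - \sigma \geq |t - \sigma|$ (valid because $t, \sigma \leq T$) then bounds this contribution by $\seminorm{u}_{\W{s}(0, T; X)}^q$. All constants depend only on $s, q$ and not on $T$.

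\textbf{Step B (zero extension).} Extending $E_T u$ by zero beyond $t = 2T$ leaves its values on $(0, 2T)$ unchanged, so the only new contribution to the Slobodeckij seminorm on $(0, \infty)$ comes from pairs $(t, \tau)$ with $t \in (0, 2T)$, $\tau \geq 2T$, and equals
\begin{equation*}
2\int_0^{2T}\!\int_{2T}^{\infty} \frac{\|E_T u(t)\|_X^q}{|t-\tau|^{1+sq}}\, d\tau\, dt = \frac{2}{sq}\int_0^{2T} \frac{\|E_T u(t)\|_X^q}{(2T - t)^{sq}}\, dt.
\end{equation*}

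\textbf{Main obstacle: scale-invariant fractional Hardy inequality.} The crux is to prove that for every $v \in \W{s}(0, 2T; X)$ with continuous trace $v(2T) = 0$,
\begin{equation*}
\int_0^{2T} \frac{\|v(t)\|_X^q}{(2T - t)^{sq}}\, dt \leq C(s, q)\, \seminorm{v}_{\W{s}(0, 2T; X)}^q,
\end{equation*}
with $C(s, q)$ independent of $T$. Two observations keep the constant $T$-free: (i) both sides scale identically under $t \mapsto \lambda t$, so it suffices to prove the case $2T = 1$; (ii) the classical scalar proof transfers verbatim to the Banach-valued setting since it uses only the triangle inequality for $\norm{\cdot}_X$. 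On $(0, 1)$ I would use the bound $\|v(t)\|_X \leq \|v(t) - v(\tau)\|_X + \|v(\tau)\|_X$ for $\tau \in (1 - (1-t)/2, 1)$, average over $\tau$, raise to the $q$-th power, multiply by $(1-t)^{-sq}$, and integrate in $t$: the first piece reduces to a localized Slobodeckij contribution since $|t - \tau| \sim 1 - t$, while the second yields the same Hardy-type integral but over a strictly shorter interval $(1/2, 1)$, which upon iteration produces a convergent geometric series bounded by $C(s,q)\seminorm{v}_{\W{s}(0,1;X)}^q$. Combining Steps A and B with this Hardy inequality delivers $\norm{E_T u}_{\W{s}(0, \infty; X)} \leq C(s, q) \norm{u}_{\WO{s}(0, T; X)}$ with $C$ independent of $T$.
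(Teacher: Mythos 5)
Your construction coincides with the paper's: even reflection about $t=T$, extension by zero past $2T$, the same four-region splitting of the Gagliardo seminorm, and your Step B tail term is exactly the paper's $Q_4=\frac{2}{sq}\int_0^{2T}\norm{\tilde u(t)}_X^q(2T-t)^{-sq}\,\d t$. The only real divergence is how that tail is handled: the paper bounds $Q_4$ by the $B^{s-\frac1q}_{\infty,q}$-seminorm and invokes Simon's embedding $\W{s}=\Bq{s}\hookrightarrow B^{s-\frac1q}_{\infty,q}$ with an explicit $T$-independent constant, whereas you propose to prove a scale-invariant vector-valued fractional Hardy inequality from scratch. The inequality you want is indeed true for $sq>1$ and the reduction to $2T=1$ by scaling, as well as the remark that only the triangle inequality in $X$ is used, are both fine.

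The gap is in the iteration you sketch for that Hardy inequality. Averaging over $\tau\in\left(1-\frac{1-t}{2},\,1\right)$ and running your scheme gives, after Fubini, $A\le C(q)\seminorm{v}_{\W{s}(0,1;X)}^q+\kappa A'$, where $A'$ is the same Hardy integral over $(\tfrac12,1)$ and $\kappa=2^{q(1-s)}/(sq)$. This factor is not a contraction in general (e.g. $q=2$, $s=0.6$ gives $\kappa\approx1.45$, and $\kappa\to 2^{q-1}$ as $sq\downarrow1$), and the ``localized'' seminorm pieces produced at successive iterations are nested, not disjoint, so the series you invoke is $\sum_j\kappa^j\seminorm{v}^q$ and need not converge: the geometric-series claim is unjustified as stated. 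The fix must use $sq>1$ quantitatively, e.g. average over a window $\left(1-\theta(1-t),1\right)$ with $\theta=\theta(s,q)$ small, which turns the factor into $2^{q-1}\theta^{sq-1}/(sq)<1$ and allows absorption (after a truncation/Fatou step to know $A<\infty$), or use the discrete Hardy inequality over dyadic shells, or simply the Besov-embedding route of the paper. A second, smaller gap: your Steps A, B and the Hardy inequality are seminorm computations valid only for $s<1$, yet you subsume $s=1$ into the same case; the endpoint must be treated separately through the weak derivative of the reflected extension (which exists across $t=2T$ precisely because $u(0)=0$), as the paper does, giving $\norm{E_T u}_{\W{1}(0,\infty;X)}=2^{1/q}\norm{u}_{\W{1}(0,T;X)}$.
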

\begin{proof}
	The proof is divided into three cases, namely, $ s = 0 $, $ \frac{1}{q} < s < 1 $ and $ s = 1 $.
	\\\textbf{Case 1: $ s = 0 $.} In this situation, $ \W{s}(0,T; X) $ is just the Lebesgue space $ \Lq{q}(0,T; X) $, which does not contain any time regularity. Hence for any function $ u \in \Lq{q}(0,T; X) $, we can take the zero extension.
	\\\textbf{Case 2: $ s = 1 $.} With $ \rv{u}_{t = 0} = 0 $, we apply an even extension to $ u $ in $ [0,T] $ around $ T $ to $ [0, 2T] $ and zero extension for $ T > 2T $ such that the extended function $ \bar{u} $ is weakly differentiable with
	\begin{equation*}
		\pt \bar{u}(t) = 
		\left\{
			\begin{aligned}
			& \pt u(t), && \text{ if } 0 \leq t \leq T, \\
			& - \pt u(2T - t), && \text{ if } T < t \leq 2T,\\
			& 0, && \text{ if } t > 2T.
			\end{aligned}
		\right.
	\end{equation*}
	Then we have
	\begin{align*}
		\norm{\bar{u}}_{\W{1}(0, \infty; X)}
		= 2^{\frac{1}{q}} \norm{u}_{\W{1}(0, T; X)}.
	\end{align*}
	\\\textbf{Case 3: $ \frac{1}{q} < s < 1 $.} With the same extension as in \textbf{Case 2}, we define the same function $ \tilde{u} $. Now we are in the position to show $ \tilde{u} \in \W{s}(0, \infty; X) $, for which we only need to prove $ \seminorm{\tilde{u}}_{\W{s}(0, \infty; X)} \leq C \seminorm{u}_{\W{s}(0, T; X)} $, where $ C $ is independent of $ T $. 
	From the definition of Sobolev-Slobodeckij space,
	\begin{align*}
		& \seminorm{\tilde{u}}_{\W{s}(0, \infty; X)}^q \\
		& = \int_0^T \int_0^T \frac{\norm{u(t) - u(\tau)}_X^q}{\abs{t-\tau}^{1 + sq}} \d t \d \tau
		+ \int_T^{2T} \int_T^{2T} \frac{\norm{u(2T - t) - u(2T - \tau)}_X^q}{\abs{t-\tau}^{1 + sq}} \d t \d \tau \\
		& \quad + 2 \int_0^T \int_T^{2T} \frac{\norm{u(t) - u(2T - \tau)}_X^q}{\abs{t-\tau}^{1 + sq}} \d \tau \d t 
		+ 2 \int_0^{2T} \int_{2T}^\infty \frac{\norm{\tilde{u}(t)}_X^q}{\abs{t-\tau}^{1 + sq}} \d \tau \d t 
		=: \sum_{i = 1}^4 Q_i.
	\end{align*}
	It is clear that 
	\begin{align*}
		Q_1 + Q_2 = 2 \seminorm{u}_{\W{s}(0, T; X)}^q.
	\end{align*}
	Since $ \abs{t - \tau} \geq \abs{t - \left(2T - \tau\right)} $ with $ t \in [0, T] $ and $ \tau \in [T, 2T] $, we have
	\begin{align*}
		Q_3 \leq 2 \int_0^T \int_0^{T} \frac{\norm{u(t) - u(h)}_X^q}{\abs{t-h}^{1 + sq}} \d h \d t 
		= 2 \seminorm{u}_{\W{s}(0, T; X)}^q.
	\end{align*}
	Noticing that $ \rv{\tilde{u}}_{t = 2T} = 0 $ due to the even extension, we get 
	\begin{align*}
		Q_4 
%		& = \frac{2}{sq} \int_0^{2T} \frac{\norm{\tilde{u}(t)}_X^q}{(2T - t)^{sq}} \d t 
%		= \frac{2}{sq} \int_0^{2T} \frac{\norm{\tilde{u}(t) - \tilde{u}(2T)}_X^q}{(2T - t)^{sq}} \d t \\
		& = \frac{2}{sq} \int_0^{2T} \frac{\norm{\tilde{u}(2T - h) - \tilde{u}(2T)}_X^q}{h^{sq}} \d h \\
		& \leq \frac{2}{sq} \int_0^{2T} \left( \frac{\norm{\tilde{u}(\cdot - h) - \tilde{u}(\cdot)}_{\Lq{\infty}(h, 2T; X)}}{h^{s - \frac{1}{q}}} \right)^q \frac{\d h}{h} 
		= \frac{2}{sq} \seminorm{\tilde{u}}_{B^{s - \frac{1}{q}}_{\infty, q}(0, 2T; X)}^q,
	\end{align*}
	where the seminorm of $ B^s_{p, q}(0, T; X) $ is given by 
	\begin{equation*}
		\seminorm{f}_{B^s_{p, q}(0, T; X)}
		= \left( \int_0^T \left( \frac{\norm{\Dh f(t)}_{L^p(h,T; X)}}{h^{s}} \right)^q \frac{\d h}{h} \right)^{\frac{1}{q}}
	\end{equation*}
	for $ 0 < s <1 $ and $ 1 \leq p, q \leq \infty $. From Theorem 10 in Simon \cite{Simon1990}, we know that for $ \frac{1}{q} < s < 1 $ and $ q \geq 1 $,
	\begin{align*}
		\seminorm{f}_{B^{s - \frac{1}{q}}_{\infty, q}(0, T; X)}
		\leq \frac{3 \theta}{s - \frac{1}{q}} \seminorm{f}_{B^s_{q, q}(0, T; X)}
		= \frac{3 \theta}{s - \frac{1}{q}} \seminorm{f}_{\W{s}(0, T; X)}, \quad \forall f \in \W{s}(0, T; X),
	\end{align*}
	where $ \theta = 3^{1 - \left( s - 1/q \right)} $.
	Hence,
	\begin{align*}
		& Q_4 
		\leq \frac{6 \theta}{sq(sq - 1)} \seminorm{\tilde{u}}_{\W{s}(0,2T; X)}^q 
		\leq \frac{24 \theta}{sq(sq - 1)} \seminorm{u}_{\W{s}(0, T; X)}^q.
	\end{align*}
	Combining estimates of $ Q_i $, $ i = 1,\dots,4 $, one obtains
	\begin{align*}
		\seminorm{\tilde{u}}_{\W{s}(0, \infty; X)} \leq C \seminorm{u}_{\W{s}(0, T; X)},
	\end{align*}
	where $ C = \left( 4 + \frac{24 \theta}{sq(sq - 1)} \right)^{1/q} $.
	
	Now, let $ E_T(u) = \tilde{u} $, then $ E_T(u) $ is well-defined from $ {_0\W{s}}(0, T; X) $ to $ \W{s}(0, T; X) $ as well as $ \rv{E_T(u)}_{[0,T]} = u $ and
	\begin{equation*}
		\norm{E_T(u)}_{\W{s}(0,\infty; X)}
		\leq C \norm{u}_{{_0\W{s}}(0,T; X)},
	\end{equation*}
	where $ C > 0 $ depends on $ s $, $ q $ and does not depend on $ T $.
\end{proof}

Next, we give an extension theorem for general functions.
\begin{theorem}
	\label{extension: general}
	Let $ X_1, X_0 $ be two Banach spaces and $ X_1 \hookrightarrow X_0 $. For $ 1 < q < \infty $ and $  0 < T < \infty $, define $ X_T := \Lq{q}(0,T; X_1) \cap \W{1}(0,T; X_0) $ endowed with the norm
	\begin{equation*}
	\norm{u}_{X_T} := \norm{u}_{\Lq{q}(0,T; X_1)} + \norm{u}_{\W{1}(0,T; X_0)} + \norm{\rv{u}_{t = 0}}_{X_\gamma},
	\end{equation*}
	where $ X_\gamma = (X_0, X_1)_{1- 1/q, q} $. Then there exists an extension operator $ \cE \in \cL(X_T, X_\infty) $ satisfying $ \rv{\cE(u)}_{[0,T]} = u $, for all $ u \in X_T $. Moreover, there is a constant $ C > 0 $, independent of $  0 < T < \infty $, such that
	\begin{equation}
		\label{extension inequality: general}
		\norm{\cE(u)}_{X_\infty} \leq C \norm{u}_{X_T},
	\end{equation}
	for all $ u \in X_T $.
\end{theorem}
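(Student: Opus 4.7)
The plan is to reduce the extension of a general function $u \in X_T$ to the zero-trace extension already established in Theorem \ref{extesion: zero initial} by subtracting off an initial-value lifting. Write $u_0 := \rv{u}_{t = 0} \in X_\gamma$. Since $X_\gamma$ is (by definition) the trace space of $X_\infty := \Lq{q}(0,\infty; X_1) \cap \W{1}(0,\infty; X_0)$ at $t = 0$, there exists a bounded right inverse of the trace, i.e.\ a linear lifting $L : X_\gamma \to X_\infty$ with $\rv{L(u_0)}_{t = 0} = u_0$ and $\|L(u_0)\|_{X_\infty} \leq C_0 \|u_0\|_{X_\gamma}$, the constant $C_0$ being independent of any $T > 0$ because neither $X_\gamma$ nor $X_\infty$ sees $T$. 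Concretely, one may take $L(u_0)(t) = e^{-tA} u_0$ for a suitable sectorial operator $A$ on $X_0$ with domain $X_1$, or simply invoke \cite[Section III.4.10]{Amann1995}.

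With this at hand, I set $w := L(u_0)$ and define
\begin{equation*}
    u_1 := u - \rv{w}_{[0,T]} \in X_T, \qquad \rv{u_1}_{t = 0} = 0.
\end{equation*}
By construction $u_1 \in \Lq{q}(0,T; X_1) \cap {_0\W{1}}(0,T; X_0)$, so Theorem \ref{extesion: zero initial} applies separately with $s = 0$ (in the space $X_1$, yielding zero extension beyond $T$) and with $s = 1$ (in the space $X_0$, yielding even reflection around $T$ followed by zero extension beyond $2T$). The key point is that the even reflection used in Theorem \ref{extesion: zero initial} produces \emph{one and the same} extension $E_T(u_1)$ which simultaneously lies in $\Lq{q}(0,\infty; X_1)$ and $\W{1}(0,\infty; X_0)$: the even reflection preserves the $X_1$-integrability, and the weak time derivative exists on $(0, 2T)$ because the reflected function is continuous at $t = T$, while the extension vanishes beyond $2T$ because $\rv{E_T(u_1)}_{t = 2T} = \rv{u_1}_{t = 0} = 0$. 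The norm bound
\begin{equation*}
    \|E_T(u_1)\|_{X_\infty} \leq C_1 \|u_1\|_{X_T}
\end{equation*}
with $C_1$ independent of $T$ is exactly what Theorem \ref{extesion: zero initial} supplies.

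Finally I define
\begin{equation*}
    \cE(u) := E_T(u_1) + w \in X_\infty.
\end{equation*}
Then $\rv{\cE(u)}_{[0,T]} = u_1 + \rv{w}_{[0,T]} = u$, and by the triangle inequality
\begin{equation*}
    \|\cE(u)\|_{X_\infty} \leq C_1 \|u_1\|_{X_T} + C_0 \|u_0\|_{X_\gamma} \leq (C_1 + (C_1 + 1) C_0) \|u\|_{X_T},
\end{equation*}
where I used $\|u_1\|_{X_T} \leq \|u\|_{X_T} + \|w\|_{X_T} \leq \|u\|_{X_T} + C_0 \|u_0\|_{X_\gamma} \leq (1 + C_0) \|u\|_{X_T}$ and the embedding of the $X_\gamma$-term into the $X_T$-norm as part of its definition. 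Linearity of $\cE$ is inherited from linearity of $L$ and of $E_T$, which proves $\cE \in \cL(X_T, X_\infty)$ with $T$-independent operator norm.

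The only subtle point is the existence of a $T$-independent lifting $L$; once that is granted (it is standard, and in fact is the very characterization of the trace space by real interpolation), the argument is a clean reduction to Theorem \ref{extesion: zero initial}. The $T$-independence of all constants that appear is automatic because $L$ is built on the half-line $(0, \infty)$ and $E_T$ comes from an even reflection whose extension constant was already shown to be independent of $T$.
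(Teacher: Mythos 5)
Your proof is correct and follows essentially the same route as the paper: lift the initial value $u_0 \in X_\gamma$ to a function in $X_\infty$ via the trace-method characterization of the real interpolation space, subtract it to reduce to the zero-trace case, extend that part by the $T$-independent reflection operator of Theorem \ref{extesion: zero initial}, and add the lifting back. The only differences are cosmetic, e.g.\ your explicit observation that the single even-reflection extension works simultaneously in the $\Lq{q}(0,\infty;X_1)$ and $\W{1}(0,\infty;X_0)$ components, which the paper leaves implicit.
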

\begin{proof}
	First of all, we consider the case $ \rv{u}_{t = 0} = 0 $. Let $ E $ be the extension operator as in Theorem \ref{extesion: zero initial}. Define $ \tilde{u} = E(u) $, we have
	$ \rv{\tilde{u}}_{[0,T]} = u $ and
	\begin{equation*}
		\norm{\tilde{u}}_{X_\infty} \leq C \norm{u}_{X_T},
	\end{equation*}
	where $ C $ does not depend on $ T $.
	
	Let $ u_0 := \rv{u}_{t = 0} \in X_\gamma $. Since $ X_\gamma = (X_0, X_1)_{1- 1/q, q} $, the trace method of interpolation implies that there exists a function $ v \in X_\infty $ such that $ \rv{v}_{t = 0} = u_0 $. Moreover, it follows from the norm of $ X_T $ that there is a constant $ C > 0 $ such that
	\begin{equation*}
		\norm{v}_{X_\infty} \leq C \norm{\rv{u}_{t = 0}}_{X_\gamma} \leq C \norm{u}_{X_T}.
	\end{equation*}
	Now for general $ u \in X_T $, define $ w := u - v $, then $ w $ is reduced to the case $ \rv{w}_{t = 0} = 0 $ and can be extended to $ E(w) $ in $ X_\infty $ like $ \tilde{u} $. Now we define the extension operator as $ \cE(u) := w + v $. Then one obtains $ \rv{\cE(u)}_{[0,T]} = u $ and there is a constant, independent of $ T $, such that
	\begin{equation*}
		\norm{\cE(u)}_{X_\infty} \leq C \norm{w}_{X_\infty} + C \norm{v}_{X_\infty} \leq C \norm{u}_{X_T},
	\end{equation*}
	for all $ u \in X_T $, which completes the proof.
\end{proof}
With a similar argument, we have the following extension theorem for functions in $ \W{2 \alpha, \alpha} $.
\begin{theorem}
	\label{extension: general anisotropic}
	Let $ \Sigma $ be a compact sufficiently smooth hypersurface. For $ 1 < q < \infty $, $ 1/q < \alpha \leq 1 $ and $  0 < T < \infty $, if $ \W{2\alpha, \alpha}(\Sigma \times (0,T)) := \Lq{q}(0,T; \W{2 \alpha}(\Sigma)) \cap \W{\alpha}(0,T; \Lq{q}(\Sigma)) $ is endowed with norm
	\begin{equation*}
		\norm{g}_{\W{2\alpha, \alpha}(\Sigma \times (0,T))} := \norm{g}_{\Lq{q}(0,T; \W{2 \alpha}(\Sigma))} + \norm{g}_{\W{\alpha}(0,T; \Lq{q}(\Sigma))} + \norm{\rv{g}_{t = 0}}_{\W{2\alpha - \frac{2}{q}}(\Sigma)},
	\end{equation*}
	then for $ g \in \W{2\alpha, \alpha}(\Sigma \times (0,T)) $, there exists an extension operator $ \cE \in \cL(\W{2\alpha, \alpha}(\Sigma \times (0,T)), \W{2\alpha, \alpha}(\Sigma \times (0,\infty))) $ satisfying $ \rv{\cE(g)}_{[0,T]} = g $. Moreover, there is a constant $ C > 0 $, independent of $  0 < T < \infty $, such that
	\begin{equation}
		\label{extension inequality: general anisotropic}
		\norm{\cE(g)}_{\W{2\alpha, \alpha}(\Sigma \times (0,\infty))} \leq C \norm{g}_{\W{2\alpha, \alpha}(\Sigma \times (0,T))}.
	\end{equation}
\end{theorem}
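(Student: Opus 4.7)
The plan is to mirror the proof of Theorem \ref{extension: general}, splitting the extension into a lifting of the initial trace and an extension of the remainder with vanishing trace at $t=0$. Since $\alpha > \tfrac{1}{q}$, the trace at $t=0$ of functions in $\W{2\alpha,\alpha}(\Sigma \times (0,T))$ is well-defined and lives in $\W{2\alpha - 2/q}(\Sigma)$, which explains why we put this quantity in the norm.

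First I would handle the case $\rv{g}_{t=0}=0$. The idea is to apply the even-around-$T$ then zero-after-$2T$ extension already used in Theorem \ref{extesion: zero initial}, namely
\begin{equation*}
\tilde{g}(x,t) :=
\begin{cases}
g(x,t), & 0 \leq t \leq T, \\
g(x, 2T - t), & T < t \leq 2T, \\
0, & t > 2T.
\end{cases}
\end{equation*}
For the spatial factor $\Lq{q}(0,\infty; \W{2\alpha}(\Sigma))$ no time smoothness is used, and by a direct computation $\norm{\tilde g}_{\Lq{q}(0,\infty;\W{2\alpha}(\Sigma))}^q = 2 \norm{g}_{\Lq{q}(0,T;\W{2\alpha}(\Sigma))}^q$. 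For the temporal factor $\W{\alpha}(0,\infty; \Lq{q}(\Sigma))$ I would apply Theorem \ref{extesion: zero initial} with $X = \Lq{q}(\Sigma)$ and $s = \alpha \in (1/q,1]$, using that the extension operator there is exactly the even-then-zero map; this yields $\norm{\tilde g}_{\W{\alpha}(0,\infty;\Lq{q}(\Sigma))} \leq C \norm{g}_{\W{\alpha}(0,T;\Lq{q}(\Sigma))}$ with $C$ depending only on $\alpha, q$.

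For general $g$, I would use the trace characterisation
\begin{equation*}
\W{2\alpha - \frac{2}{q}}(\Sigma) = \bigl(\Lq{q}(\Sigma), \W{2\alpha}(\Sigma)\bigr)_{1 - \frac{1}{\alpha q}, q},
\end{equation*}
so that the initial-value map $u \mapsto \rv{u}_{t=0}$ from $\W{2\alpha,\alpha}(\Sigma \times (0,\infty))$ onto $\W{2\alpha - 2/q}(\Sigma)$ admits a bounded right inverse $\sM : \W{2\alpha - 2/q}(\Sigma) \to \W{2\alpha,\alpha}(\Sigma \times (0,\infty))$. Since $\sM$ is defined once and for all on the half-line, its operator norm is automatically independent of $T$. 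Setting $v := \sM(\rv{g}_{t=0})$ gives $\rv{v}_{t=0} = \rv{g}_{t=0}$ together with $\norm{v}_{\W{2\alpha,\alpha}(\Sigma \times (0,\infty))} \leq C \norm{\rv{g}_{t=0}}_{\W{2\alpha - 2/q}(\Sigma)}$.

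Writing $w := g - \rv{v}_{[0,T]}$ one has $w \in \W{2\alpha,\alpha}(\Sigma \times (0,T))$ with $\rv{w}_{t=0} = 0$, so the first step applies and produces $\tilde{w} \in \W{2\alpha,\alpha}(\Sigma \times (0,\infty))$. Defining $\cE(g) := \tilde{w} + v$ clearly gives $\rv{\cE(g)}_{[0,T]} = g$, and the triangle inequality combined with the two previous bounds yields \eqref{extension inequality: general anisotropic} with constant independent of $T$. The only subtle point, which I expect to be the main (mild) obstacle, is keeping careful track of the uniformity in $T$: both building blocks must have $T$-independent norms, which is guaranteed here because (i) the even-then-zero extension produces the explicit factor $2^{1/q}$ in the Lebesgue direction and, via Theorem \ref{extesion: zero initial}, a constant depending only on $\alpha,q$ in the Sobolev-Slobodeckij direction; and (ii) the right inverse $\sM$ is fixed on $(0,\infty)$ and therefore has a $T$-independent norm as well.
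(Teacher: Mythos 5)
Your proposal is correct and follows essentially the same route the paper indicates: it reduces to the zero-trace case handled by the even-then-zero extension of Theorem \ref{extesion: zero initial} (applied with $X=\Lq{q}(\Sigma)$, $s=\alpha$, plus the trivial factor $2^{1/q}$ in the $\Lq{q}(0,T;\W{2\alpha}(\Sigma))$ direction), and lifts the initial trace via the trace-method interpolation identification of $\W{2\alpha-\frac{2}{q}}(\Sigma)$, exactly as in the proof of Theorem \ref{extension: general} and the paper's accompanying remark. The uniformity in $T$ is argued the same way, so nothing further is needed.
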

\begin{remark}
	The proof is similar to what in Theorem \ref{extension: general}, for which it relies on Theorem \ref{extesion: zero initial} for $ 1/q < \alpha < 1 $ and the trace method interpolation, namely,
	\begin{equation*}
		\W{2\alpha - \frac{2}{q}}(\Sigma) 
		= \left\{
			g(0) : g \in \Lq{q}(0,T; \W{2 \alpha}(\Sigma)) \cap \W{\alpha}(0,T; \Lq{q}(\Sigma))
		\right\},
	\end{equation*}
	see e.g., Lemma \ref{embedding: W alpha} or \cite[Example 3.4.9(i)]{PS2016}. These results can also be extended to more general anisotropic Sobolev-Slobodeckij spaces with general trace theorem, see e.g., \cite[Theorem 3.4.8]{PS2016}.
\end{remark}

\end{document}